\pgfplotsset{compat=newest}
\newtheorem{theorem}{Theorem}[section]  
\newtheorem{proposition}[theorem]{Proposition}
\newtheorem{lemma}[theorem]{Lemma}
\newtheorem{corollary}[theorem]{Corollary}
\theoremstyle{definition}
\newtheorem{definition}[theorem]{Definition}
\newtheorem{remark}[theorem]{Remark}
\newtheorem{example}[theorem]{Example}
\begin{document}

\title{Generic  infinitesimal rigidity for rotational groups in the plane}
\author{Alison La Porta\thanks{School of Mathematical Sciences, Lancaster University, UK, \texttt{a.laporta@lancaster.ac.uk} (corr. author)} \,
and Bernd Schulze\thanks{School of Mathematical Sciences, Lancaster University, UK, \texttt{b.schulze@lancaster.ac.uk}}}

\date{}
\maketitle

\begin{abstract}
In this paper we establish combinatorial characterisations of symmetry-generic infinitesimally rigid frameworks in the Euclidean plane for rotational groups of order $4$ and $6$, and of odd order between $5$ and $1000$, where a joint may lie at the centre of rotation. This extends the corresponding results for these groups in the free action case obtained by R.~Ikeshita and S.~Tanigawa in 2015, and our recent results for the reflection group and the rotational groups of order $2$ and $3$ in the non-free action case. The characterisations are given in terms of sparsity counts on the corresponding group-labeled quotient graphs, and are obtained via symmetry-adapted versions of recursive Henneberg-type graph constructions.  
For rotational groups of even order at least $8$, we show that the sparsity counts alone are not sufficient for symmetry-generic infinitesimal rigidity. 

\end{abstract}

\noindent \textbf{Keywords}: infinitesimal rigidity; rotational symmetry; non-free group action; gain graph; sparsity counts.

\section{Introduction}

The rigidity and flexibility analysis of symmetric bar-joint frameworks and related geometric constraint systems has received a lot of attention  over the last two decades or so, which has led to an explosion of results on this topic; see for example \cite{sw2011,jzt2016,bernstein,mtforced,tanigawamatroids,dewar,kkm,bt2015,Ikeshita,RIandST16,schtan_bodies}. This surge in activity was in  part motivated by modern practical applications of the theory in areas such as structural engineering, robotics, structural biology, materials science, and computer-aided design, where non-trivial symmetries are ubiquitous and often have a crucial impact on the properties and behaviour of the structures.

A major line of research in this area has been to obtain efficient methodologies for determining when a bar-joint framework that is generic with respect to certain prescribed symmetry constraints is infinitesimally (or equivalently statically) rigid. Using methods from group representation theory, necessary conditions for a symmetric framework in Euclidean d-space to be ``iostatic" (i.e. minimally infinitesimally rigid, in the sense that the removal of any edge yields an infinitesimally flexible framework) were established in \cite{FowlerGuest02,sch10a,CFGSW09}. In  \cite{SLaman2,SLaman},  these conditions were shown to be sufficient if the framework is generic with respect to a group generated by a reflection, half-turn or three-fold rotation in the plane. See \cite{bernd2017sym,KTHandbookSch} for further results and open questions regarding symmetric isostatic frameworks.

The more general question of when a symmetry-generic framework is infinitesimally rigid, rather than just isostatic, is more complicated, as not every symmetric infinitesimally rigid framework has an isostatic spanning subframework with the same symmetry. A major breakthrough for analysing this more general question was achieved in \cite{bt2015}. The main idea in that work was to decompose the rigidity matrix (a fundamental tool for analysing infinitesimal rigidity) into block matrices corresponding to the irreducible representations of the group, and to set up a simpler but equivalent ``orbit rigidity matrix”  for each block. The underlying combinatorial structure for each orbit rigidity matrix is a group-labelled quotient graph, also known as a ``gain graph”, and using symmetry-adapted Henneberg-type recursive graph constructions, one can try to characterise the gain graphs that yield orbit rigidity matrices of maximum rank (corresponding to frameworks with only trivial phase-symmetric infinitesimal motions) for symmetry-generic configurations. Using this approach, combinatorial characterisations for symmetry-generic infinitesimal rigidity in the plane have been obtained for the groups generated by a reflection, half-turn and three-fold rotation in  \cite{bt2015}. R. Ikeshita and S. Tanigawa then extended these results further to rotational groups of odd order at most 1000 in the plane  \cite{Ikeshita}. See also \cite{RIandST16}.

 Since there is no combinatorial characterisation for generic rigidity without symmetry in dimensions 3 and higher, there are no analogous results for symmetric bar-joint frameworks for $d\geq 3$. However, such results have been established for the special classes of body-bar and body-hinge frameworks with $\mathbb{Z}_2\times \cdots \times \mathbb{Z}_2$ symmetry in \cite{schtan_bodies}. See also the result on symmetric $d$-pseuodmanifolds in $(d+1)$-space obtained in \cite{crjata}.
 
Importantly, all of the above results on symmetry-generic infinitesimal rigidity have made the assumption that the symmetry group acts freely on the vertex set of the graph. This simplifies the structure of the orbit rigidity matrices and the corresponding sparsity counts for the gain graphs appearing in the combinatorial characterisations significantly. However, this means that our knowledge of when symmetric frameworks are infinitesimally rigid is still severely restricted. Frameworks that model real-world structures in some of the application areas mentioned above are often symmetric, with vertices lying on reflection lines or centers of rotation \cite{smmb,schmil,stz}. Thus, closing this gap in our understanding of symmetric frameworks is not just of mathematical interest, but also important for a variety of real-world applications.

In our recent paper \cite{SchulzeLP2024}, we introduced orbit rigidity matrices for symmetric frameworks in the plane, where the group action is not necessarily free on the vertex set. Moreover, we used these matrices to establish necessary conditions for infinitesimal rigidity and to extend the combinatorial characterisations obtained in \cite{bt2015} for the reflection, half-turn and three-fold rotational group in the plane to the cases when vertices may be fixed by non-trivial group elements. Here we extend these results further to all the groups considered in \cite{Ikeshita}, i.e. all rotational groups of odd order at most 1000, where a vertex may be fixed by a rotation. As we will see, the gain sparsity counts for these groups are even more complex than for the groups of order at most $3$. In addition, we provide analogous results for the rotational groups of order $4$ and $6$, and we provide classes of examples, both for the free and non-free group action case, which show that the standard sparsity counts are not sufficient for symmetry-generic infinitesimal rigidity.

The paper is organised as follows. In Section~\ref{sec:infrig} we review basic notions from rigidity theory. Section~\ref{sec:gaingraphs} introduces gain graphs and provides  the relevant gain sparsity counts that are necessary for infinitesimal rigidity. Section~\ref{sec:red} introduces operations on gain graphs that preserve phase-symmetric infinitesimal rigidity. These operations are used in Sections~\ref{sec:blockers}, \ref{sec:reduct} and \ref{sec:final} to show the sufficiency of the gain sparsity counts for the groups of order $4$ and $6$ and of odd order at most $1000$ via an induction proof. Finally, in Section~\ref{sec:even k} we provide examples of gain graphs for the cyclic groups of even order at least $8$, which satisfy all the necessary gain sparsity counts, but still correspond to infinitesimally flexible frameworks.

\section{Infinitesimal rigidity of symmetric frameworks}\label{sec:infrig}

A \textit{(bar-joint) framework} in $\mathbb{R}^d$ is a pair $\left(G,p\right)$ where $G$ is a finite simple graph and $p:V\left(G\right)\rightarrow\mathbb{R}^d$ is an injective map. We say $(G,p)$ and $p$ are a \textit{realisation} and a \textit{configuration}, respectively, of the \textit{underlying graph} $G$. We will assume throughout the paper that $p(V(G))$ affinely spans $\mathbb{R}^d$. An \textit{infinitesimal motion} of $(G,p)$ is a function $m:V(G)\rightarrow\mathbb{R}^d$ such that for all $\{u,v\}\in E(G),$ 
\begin{equation}
\label{inf. motion}
     (p(u)-p(v))^T\cdot(m(u)-m(v))=0.
\end{equation}
The infinitesimal motion $m$ is defined to be \textit{trivial} if there is a skew-symmetric matrix $M\in M_d(\mathbb{R})$ and a $d$-dimensional vector $t$ such that $m(u)=Mp(u)+t$ for all $u\in V(G)$. We say $(G,p)$ is \textit{infinitesimally rigid} if all of its infinitesimal motions are trivial. It is often useful to view $m$ as a column vector of size $d|V(G)|.$ When doing so, the space of infinitesimal motions of a frameworks coincide with the right kernel of a well-known matrix, the \textit{rigidity matrix} of $(G,p)$, which we usually denote $R(G,p)$. It is easy to see that the space of trivial infinitesimal motions of a framework that affinely spans $\mathbb{R}^d$ has dimension $\frac{d(d+1)}{2}$. Hence, $\textrm{null}\,R(G,p)\geq\frac{d(d+1)}{2}$, and a framework is infinitesimally rigid if and only if this equation holds with equality.


This paper is concerned with frameworks which are symmetric with respect to a rotational group. Here, we formalise the definition of symmetric graphs, and hence the definition of symmetric frameworks. First, we set some group notation that will be used throughout the paper.

Let $k\geq4$ be an integer, and $\mathbb{Z}_k=\{0,1,\dots,k-1\}$ be the additive cyclic group of order $k$. We often identify $\mathbb{Z}_k$ with the multiplicative group $\Gamma=\left<\gamma\right>$ through the isomorphism which maps 1 to $\gamma$. For $0\leq j\leq k-1$, we use $\rho_j$ to denote the group representation of $\Gamma$ which sends $\gamma$ to the scalar $\exp{\frac{2\pi ij}{k}}$. We use $C_k$ to denote the anti-clockwise rotation around the origin by $\frac{2\pi}{k}$, and $\mathcal{C}_k$ to denote the group generated $C_k$. We also use $\tau_k:\Gamma\rightarrow\mathcal{C}_k$ to denote the isomorphism which maps $\gamma$ to $C_k$.

We say a finite simple graph $G$ is $\Gamma$-symmetric if there is a homomorphism $\theta:\Gamma\rightarrow\textrm{Aut}(G)$, where $\textrm{Aut}(G)$ denotes the automorphism group of $G$. Notice that, since $\Gamma\simeq\mathbb{Z}_k$, we may also say $G$ is $\mathbb{Z}_k$-symmetric. We will often drop the map $\theta$ from the notation if it's clear from the context, and abbreviate $\theta(\delta)$ to $\delta$, for all $\delta\in\Gamma$. We say a framework $(G,p)$ in $\mathbb{R}^2$ is $\mathcal{C}_k$-symmetric if $G$ is $\Gamma$-symmetric and, for all $\delta\in\Gamma,v\in V(G)$, we have $\tau_k(\delta)p(v)=p(\delta v)$. 
We say $p$ (or, equivalently $(G,p)$) is \textit{$\mathcal{C}_k$-generic} if $\textrm{rank}\,R(G,p)\geq\textrm{rank}\,R(G,q)$ for all  realisations $(G,q)$ that are $\mathcal{C}_k$-symmetric (with the same $\theta$). 

Given a $\Gamma$-symmetric graph $G$ and a vertex $v\in V(G)$, we say $\delta\in\Gamma$ \textit{fixes} $v$ if $\delta v=v$, and we use $S_{\Gamma}(v)$ to denote the subgroup of $\Gamma$ whose elements are exactly the elements which fix $v$. We define the elements of $V_0(G):=\{v\in V(G):S_{\Gamma}(v)=\Gamma\}$ and $\overline{V(G)}:=\{v\in V(G):S_{\Gamma}(v)=\{\textrm{id}\}\}$ to be the \textit{fixed} and \textit{free vertices} of $G$, respectively. Let $(G,p)$ be a $\mathcal{C}_k$-symmetric framework, and $v\in V(G)$ be fixed by a non-identity element $\delta\in\Gamma$. Since $\tau_k(\delta)$ is a non-trivial rotation, it is easy to see that $p(v)$ is the zero vector, and $\tau_k(\delta')p(v)=p(v)$ for all $\delta'\in\Gamma$. Since we are concerned with the $\mathcal{C}_k$-symmetric realisations of $\Gamma$-symmetric graphs, we assume throughout the paper that $V(G)=V_0(G)\,\dot\cup\,\overline{V(G)}$ and that $|V_0(G)|\leq1$ (recall that $p$ is an injective function). 

Theorem 3.2 in \cite{sch10a} shows that, under a suitable symmetry adapted basis, the rigidity matrix of a $\mathcal{C}_k$-symmetric framework $(G,p)$ block diagonalises into $k$ matrices $\tilde{R}_0(G,p),\dots,\tilde{R}_{k-1}(G,p)$, each one corresponding to an irreducible representation $\rho_j$ of the cyclic group $\Gamma$ of order $k$. Given $0\leq j\leq k-1$, an infinitesimal motion $m$ of $(G,p)$ is said to be \textit{$\rho_j$-symmetric} if $m(\delta v)=\overline{\rho_j(\delta)}\tau_k(\delta)m(v)$ for all $v\in V(G)$ and all $\delta\in\Gamma$. We say $(G,p)$ is \textit{$\rho_j$-symmetrically isostatic} if  all $\rho_j$-symmetric infinitesimal motions of $(G,p)$ are trivial and $R_j(G,p)$ has no non-trivial row dependence. Notice that, if $(G,p)$ is $\rho_j$-symmetrically isostatic for some $0\leq j\leq k-1$, then any $\mathcal{C}_k$-generic realisation $(G,q)$ of $G$ is $\rho_j$-symmetrically isostatic. 

Let $(G,p)$ be a $\mathcal{C}_k$-symmetric framework in $\mathbb{R}^2$. Recall that the nullity of $R(G,p)$ is at least 3. The following result shows how the null space of $R(G,p)$ splits with respect to the block-diagonalisation of the rigidity matrix (for an argument, see the proof Theorem 6.7 in \cite{bt2015}).

\begin{proposition}
Let $k\geq4$, and $(G,p)$ be a $\mathcal{C}_k$-symmetric framework. The spaces of trivial $\rho_0$-,$\rho_1$- and $\rho_{k-1}$-symmetric infinitesimal motions all have dimension 1. For $2\leq j\leq k-2$, the space of trivial $\rho_j$-symmetric infinitesimal motions has dimension 0.
\end{proposition}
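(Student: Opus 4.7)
My plan is to analyze the $3$-(complex-)dimensional space of trivial infinitesimal motions of $(G,p)$ via an explicit basis and determine, for each $j$, which elements satisfy the $\rho_j$-symmetry condition $m(\delta v) = \overline{\rho_j(\delta)}\tau_k(\delta) m(v)$. A convenient basis consists of the two coordinate translations $m(v) \equiv e_1,\, m(v) \equiv e_2$ and the infinitesimal rotation $m(v) = Mp(v)$ about the origin, where $M = \begin{pmatrix}0 & -1\\ 1 & 0\end{pmatrix}$. Any trivial motion decomposes uniquely into its translational and rotational parts, so the trivial $\rho_j$-symmetric motions are exactly the complex-linear combinations of these basis elements for which the symmetry condition holds at the generator $\gamma$ (and hence on all of $\Gamma$).

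For the infinitesimal rotation $m(v) = Mp(v)$: since both $M$ and $\tau_k(\delta)$ are rotations about the origin they commute, and so $m(\delta v) = M \tau_k(\delta) p(v) = \tau_k(\delta) m(v)$. Comparison with the $\rho_j$-symmetry condition forces $\overline{\rho_j(\delta)} = 1$ for every $\delta \in \Gamma$, which happens exactly when $j = 0$. Hence the rotational line contributes a one-dimensional subspace to the trivial $\rho_0$-symmetric motions and nothing to the trivial $\rho_j$-symmetric motions for $j \neq 0$.

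For a translation $m(v) \equiv t \in \mathbb{C}^2$: the condition at $\delta = \gamma$ reads $t = \overline{\rho_j(\gamma)}\, C_k\, t$, equivalently $C_k\, t = e^{2\pi i j/k}\, t$, so $t$ must be an eigenvector of $C_k$ with eigenvalue $e^{2\pi i j/k}$. Over $\mathbb{C}$, the planar rotation $C_k$ diagonalises with eigenvalues $e^{2\pi i/k}$ and $e^{-2\pi i/k} = e^{2\pi i(k-1)/k}$, each on a one-dimensional eigenspace (spanned by $(1,-i)^T$ and $(1,i)^T$, respectively). Since $k \geq 4$, these eigenvalues correspond precisely to $j = 1$ and $j = k-1$ and differ from $e^{2\pi i j/k}$ for every $2 \leq j \leq k-2$. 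Thus translations contribute exactly one complex dimension to each of the trivial $\rho_1$- and $\rho_{k-1}$-symmetric motion spaces, and zero to the rest.

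Combining the two contributions gives the claimed dimensions: $1$ for $j \in \{0,1,k-1\}$ and $0$ for $2 \leq j \leq k-2$, with $1+1+1 = 3$ providing a consistency check against the total complex dimension of the trivial motion space. I do not anticipate any substantial obstacle here; the argument is a direct computation exploiting the commutativity of rotations about the origin together with the eigenstructure of $C_k$ on $\mathbb{C}^2$.
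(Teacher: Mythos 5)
Your argument is correct and is essentially the standard one: the paper gives no proof of this proposition, deferring to the proof of Theorem 6.7 in \cite{bt2015}, which likewise decomposes the $3$-dimensional trivial motion space into its rotational part (transforming as $\rho_0$) and translational part (transforming as $\rho_1\oplus\rho_{k-1}$, via the eigenstructure of $C_k$ on $\mathbb{C}^2$). The only step you gloss over is why a mixed trivial motion $aMp+t$ can only be $\rho_j$-symmetric when each part is separately so; this follows in one line because the translational and rotational subspaces are each $\Gamma$-invariant (or by noting that the $v$-dependence of $(1-\overline{\rho_j(\delta)})a\,\tau_k(\delta)Mp(v)$ forces $a(1-\overline{\rho_j(\delta)})=0$ since $p$ affinely spans the plane).
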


Clearly, a $\mathcal{C}_k$-symmetric framework is infinitesimally rigid if and only if it has no non-trivial $\rho_j$-symmetric infinitesimal motion for all $0\leq j\leq k-1$. Hence, we aim to combinatorially characterise $\mathcal{C}_k$-generic $\rho_j$-symmetrically isostatic frameworks for each $\rho_j$ in order to have a characterisation of $\mathcal{C}_k$-generic infinitesimally rigid  frameworks. In \cite{SchulzeLP2024}, we characterised $\mathcal{C}_k$-generic $\rho_0$-,$\rho_1$- and $\rho_{k-1}$-symmetrically isostatic frameworks for cyclic groups, and we found necessary conditions for $\rho_j$-symmetric isostaticity for all $2\leq j\leq k-2$. This paper is aimed at giving sufficient conditions for $\rho_j$-symmetric isostaticity, where $2\leq j\leq k-2$, in order to have a full characterisation of  $\mathcal{C}_k$-generic infinitesimally rigid frameworks.

The main combinatorial object we will use for this is the gain graph, which allows us to reduce the redundancies which occur in symmetric frameworks. As we will see, gain graphs are labelled multigraphs. Hence, a vertex may have a loop. We assume, throughout the paper, that a loop adds 2 to the degree of a vertex.

\section{Gain graphs} \label{sec:gaingraphs}

For an integer $k\geq4$, let $(\tilde{G},\tilde p)$ be a $\mathcal{C}_k$-symmetric framework and consider its underlying $\Gamma$-symmetric graph $\tilde G$. Let $G$ be the $\Gamma$-quotient graph of $\tilde G$, whose vertex set and edge set are, respectively, the sets of vertex orbits and edge orbits of $\tilde G$. Recall that, if $\tilde{G}$ has a fixed vertex, then $V(\tilde{G})$ is partitioned into the set of free vertices of $\tilde G$ and the singleton set containing the fixed vertex of $\tilde G$. It follows that $V(G)$ is partitioned into the sets $V_0(G):=\{v\in V(G):|v|=1\}$ and $\overline{V(G)}:=\{v\in V(G):|v|=k\}$, where $|V_0(G)|\leq1$. 

By orienting the edges of $G$ and assigning them a group label, we create a combinatorial object, known as the ``$\Gamma$-gain graph" of $\tilde G$, which contains all of the information of $\tilde G$, and discards any redundancies. We do so in the following way. 

First, we fix an orientation on the edges of $G$. Then, for each vertex orbit $v\in V(G)$, we fix a representative vertex $v^{\star}\in V(\tilde G)$. We define the following \textit{gain function} $\psi:E(G)\rightarrow\Gamma$. For each directed edge $e= (u,v)$:
\begin{itemize}
    \item If $u,v\in \overline{V(G)}$, then there exists a unique $\delta\in\Gamma$ such that $\{u^{\star},\delta v^{\star}\}\in e$. We let $\psi(e)=\delta$.
    \item If one of $u,v$ is fixed, say $u\in V_0(G)$, then $e=\{u^{\star},\delta v^{\star}|\, \delta\in \Gamma\}$. We let $\psi(e)=\delta$ for any $\delta\in\Gamma$. 
\end{itemize}
We say $(G,\psi)$ is the \textit{$\Gamma$-gain graph} of $\tilde G$, and we say $\tilde G$ is the \textit{$\Gamma$-lifting} (or \textit{$\Gamma$-covering}) of $(G,\psi)$. 

Let $p:V(G)\rightarrow\mathbb{R}^2$ be defined by letting $p(u)=\tilde p(u^{\star})$ for all $u\in V(G)$. Then, we say $(G,\psi,p)$ is the \textit{$\mathcal{C}_k$-gain framework} of $(\tilde G,\tilde p).$ For each block $\tilde R_j(\tilde G,\tilde p)$ in the rigidity matrix, we may construct a matrix $O_j(G,\psi,p)$ of the same size and the same rank and nullity, which solely depends on the $\mathcal{C}_k$-gain framework $(G,\psi,p)$ (see Section 4 in \cite{SchulzeLP2024} for the definition of $O_j(G,\psi,p)$ and for more details).

In this construction, we can redirect any edge and label it with the group inverse of the original label chosen. Up to this operation, up to the choice of representatives, and up to the choice of labels on the edges incident to the fixed vertex, this process gives rise to a unique $\Gamma$-gain graph. Two $\Gamma$-gain graphs of the same $\Gamma$-symmetric graph are called \textit{equivalent}. Equivalent $\Gamma$-gain graphs share the same combinatorial properties. Moreover, since they share the same $\Gamma$-lifting, they also have the same infinitesimal rigidity properties (see Lemma 4.6 and Proposition 5.1 in \cite{SchulzeLP2024}).  The following was shown in \cite{jzt2016} (Lemma 2.4) for the case where $V_0(G)=\emptyset$, and the same argument can easily be generalised for the case where $V_0(G)=\{v_0\}$.

\begin{lemma}
\label{lemma: forests can have identity gains}
    Let $(G,\psi)$ be a $\Gamma$-gain graph. For any forest $T$ in $E(G)$, there is some $\psi'$ equivalent to $\psi$ such that $\psi'(e)=\textrm{id}$ for all $e\in T$.
\end{lemma}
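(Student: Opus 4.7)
The plan is to reduce to the classical argument of Lemma 2.4 in \cite{jzt2016} by isolating the single new feature introduced by the possibly-present fixed vertex $v_0$. First I would decompose $T$ into its connected components $T_1,\ldots,T_m$, and in each tree $T_i$ pick a root $r_i$, taking $r_i:=v_0$ whenever $v_0\in V(T_i)$ and any vertex otherwise. Using the freedom to redirect edges (with corresponding inversion of labels), I may then orient each edge of $T$ away from its root.

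If some $T_i$ contains $v_0$, I would next use the extra freedom in the equivalence relation at the fixed vertex: the label of every edge incident to $v_0$ may be chosen arbitrarily and independently of the representative choices at free vertices. I would simply set $\psi'(e):=\textrm{id}$ for every edge of $T_i$ incident to $v_0$. The main step is then a BFS traversal of each tree starting at its root. When a non-root vertex $v$ is first visited via its parent edge $e=(u,v)$ with current gain $\eta\in\Gamma$, I would replace the chosen representative $v^\star$ by $\eta v^\star$; a direct computation from the definition of $\psi$ shows that this turns the gain of $e$ into $\eta\cdot\eta^{-1}=\textrm{id}$, while leaving the gains of all edges of $T$ not incident to $v$ untouched. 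Since $T$ is a forest, the only $T$-edges incident to $v$ are the parent edge (just processed) and edges to children (not yet processed), so no previously set identity gain on $T$ is disturbed; in particular, any identity gains assigned in the previous step on edges at $v_0$ are preserved, because none of those edges involve $v$ (they are $T$-adjacent to $v$ only when $v_0$ is $v$'s parent, which is the very edge being adjusted).

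The main obstacle, and the only step not already present in the free-case argument, is the handling of $v_0$: because $v_0$ is pinned at the centre of rotation, its representative cannot be moved, and therefore the classical ``change the child's representative to kill the parent edge's gain'' trick fails whenever the parent is $v_0$. The independent relabelling freedom at edges incident to the fixed vertex compensates for exactly this loss, which is why the special second step suffices. With that in place, the BFS/representative-change argument carries over verbatim and produces the desired equivalent gain function $\psi'$ with $\psi'(e)=\textrm{id}$ for all $e\in T$.
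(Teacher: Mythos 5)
Your proof is correct and takes essentially the approach the paper intends: the paper gives no written proof, merely citing Lemma 2.4 of \cite{jzt2016} for the free case and asserting the generalisation, and your argument is exactly that generalisation --- the standard root-and-switch (representative change) along each tree, with the fixed vertex handled by the free choice of labels on edges incident to $v_0$ that is built into the equivalence relation. No gaps.
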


Notice that $p(v)$ is the zero vector if  $v\in V_0(G)$. When drawing the $\Gamma$-gain graph of $\tilde{G}$, we use a black circle to denote the fixed vertex, and white circles to denote the elements of $\overline{V(G)}$ (see Figure~\ref{fig: first gain graph example}).

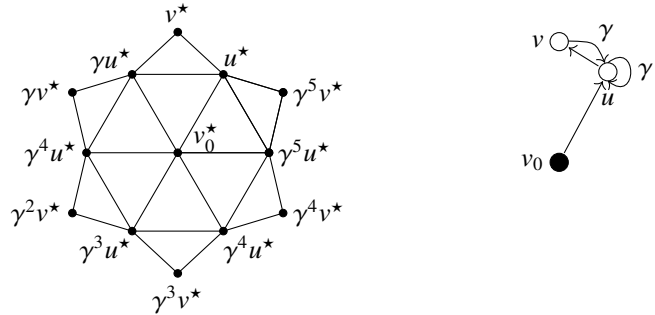
\begin{figure}[H]
    \centering
         \begin{tikzpicture}[scale = 0.8] 
        \foreach \x in {0,60,120,...,360} \draw (\x:1.5cm) -- (\x+60:1.5cm);
        \foreach \x in {0,60,120,...,360} \draw (\x:1.5cm) -- (\x+30:2cm) -- (\x+60:1.5cm);
      \foreach \x in {0,60,120,...,360} \draw (\x:0cm) -- (\x:1.5cm);
     \foreach \x in {0,60,120,...,360} \node[inner sep=1pt,circle,draw,fill] at (\x:1.5cm) {};
     \foreach \x in {0,60,120,...,360} \node[inner sep=1pt,circle,draw,fill] at (\x+30:2cm) {};
     \node[inner sep=1pt,circle,draw,fill] at (0:0cm) {};
     \node[right] at (0.1,0.25){$v_0^{\star}$};
     \node[right] at (0.6,1.6){$u^{\star}$};
     \node[left] at (-0.6,1.53){$\gamma u^{\star}$};
     \node[left] at (-1.5,0){$\gamma^4u^{\star}$};
     \node[left] at (-0.6,-1.6){$\gamma^3u^{\star}$};
     \node[right] at (0.6,-1.6){$\gamma^4u^{\star}$};
     \node[right] at (1.5,0){$\gamma^5u^{\star}$};
     \node[above] at (0,2){$v^{\star}$};
     \node[left] at (-1.75,1){$\gamma v^{\star}$};
     \node[left] at (-1.75,-1){$\gamma^2v^{\star}$};
     \node[below] at (0,-2.05){$\gamma^3v^{\star}$};
     \node[right] at (1.75,-1){$\gamma^4v^{\star}$};
     \node[right] at (1.75,1){$\gamma^5v^{\star}$};
      \end{tikzpicture}\hspace{2cm}
    \begin{subfigure}{0.2\textwidth}
        \begin{tikzpicture}[scale = 0.8] 
            \node at (0,-2.5){};
            \node (v_0) at (0,0){};
            \node (u) at (0.8,1.5){};
            \node (v) at (0,2){};
            \draw[->] (v_0) -- (u);
            \draw[->] (u) -- (v);
            
            \draw (u) circle (0.15cm);
            \draw (v) circle (0.15cm);
            \draw[->] (0.15, 2.01) .. controls (0.6, 2) .. (0.75, 1.65);
            \draw[->] (0.8,1.65) .. controls (1.3,2.05) and (1.3,0.95) .. (0.8,1.35);
            \filldraw[black] (v_0) circle (0.15cm);
            \node[right] at (1.15, 1.5){$\gamma$};
            \node[right] at (0.5,2.15){$\gamma$};
            \node[below] at (0.8, 1.3){$u$};
            \node[left] at (-0.1,0) {$v_0$};
            \node[left] at (-0.1,2) {$v$};
        \end{tikzpicture}
    \end{subfigure}
    \caption{A $\Gamma$-symmetric graph and its $\Gamma$-gain graph. Here, $\Gamma\simeq\mathbb{Z}_6$ through an isomorphism which sends $\gamma\in\Gamma$ to 1. The unlabelled edges have gain $\text{id}$.}
    \label{fig: first gain graph example}
\end{figure}

This process gives rise to the class of $\Gamma$-gain graphs (see Definition~\ref{def:gaingraph} below), and it may be reversed in order to construct a unique $\Gamma$-symmetric graph $\tilde G$ from a $\Gamma$-gain graph $(G,\psi)$ (see Section 3.2 in \cite{SchulzeLP2024} for details).  

\begin{definition} \label{def:gaingraph}
  Let $\Gamma$ be a cyclic group. A \emph{$\Gamma$-gain graph} is a pair $(G,\psi),$ where $G$ is a directed multigraph and $\psi:E(G)\rightarrow\Gamma$ is a function that assigns a label to each edge such that, for some partition $V(G)=V_0(G)\,\dot\cup\,\overline{V(G)}$, the set $V_0(G)$ has at most one vertex, and the following conditions are satisfied:

\begin{enumerate}
\item if $e,f\in E(G)$ are parallel and have the same direction, then $\psi(e)\neq\psi(f)$. If they are parallel and have opposite directions, then $\psi(e)\neq\psi(f)^{-1}$;
\item if $V_0(G)$ contains a vertex $v_0$, then $v_0$ is not incident to a loop or parallel edges;
\item if $e\in E(G)$ is a loop, $\psi(e)\neq\textrm{id}$.
\end{enumerate} 
We call $\psi$ the \textit{gain function} of $(G,\psi)$. The elements of $\overline{V(G)}$ are called the \emph{free vertices} of $(G,\psi)$ and, if $V_0(G)\neq\emptyset$, the only element in $V_0(G)$ is called the \textit{fixed vertex of }$(G,\psi)$, and is usually denoted $v_0$.
\end{definition}

Let $k:=|\Gamma|\geq4.$ It follows from the main result of this paper, that the combinatorics of a $\Gamma$-gain graph determines whether a $\mathcal{C}_k$-generic realisation of its $\Gamma$-lifting is infinitesimally rigid, for $k=4,6$, or odd $k$ less than 1000 (see Theorem~\ref{final theorem for rotation}). In Section~\ref{sec:gaincounts}, we describe the combinatorial counts that the $\Gamma$-gain graph must satisfy in order to obtain infinitesimal rigidity. First, we need to define the notions of balancedness, near-balancedness and $S(k,j)$. All such notions may be found in Section 4.1 of \cite{Ikeshita} and in Section 1 of \cite{RIandST16} (for the case where $V_0(G)=\emptyset$), and the notion of near-balancedness can be found in Section 2.2 of \cite{jzt2016}. Furthermore, the same notions can be found in Section 3.3 of \cite{SchulzeLP2024} also for the case where $V_0(G)\neq\emptyset$.

\subsection{Balanced, near-balanced and S(k,j) gain graphs}
Let $(G,\psi)$ be a connected $\Gamma$-gain graph and let $W=v_1e_1v_2,\dots,v_{t-1}e_tv_t$ be a walk in $(G,\psi)$. The \textit{gain of $W$} is $\psi(W)=\prod_{i=1}^t\psi(e_i)^{\textrm{sign}(e_i)}$, where $\textrm{sign}(e_i)=1$ if $e_i$ is directed from $v_i$ to $v_{i+1}$, and $\textrm{sign}(e_i)=-1$ otherwise. We use $\left<E(G)\right>$ (or $\left<G\right>$) to denote the group generated by $\{\psi(W):W\text{ is a closed walk in }G\text{ with no fixed vertex}\}$.
Given $0\leq m\leq 2,0\leq l\leq 3$, we say a $(G,\psi)$ is \textit{$(2,m,l)$-sparse} if $|E(H)|\leq 2|\overline{V(H)}|+m|V_0(H)|-l$ for all subgraphs $H$ of $G$ with $E(H)\neq\emptyset$, and we say $(G,\psi)$ is \textit{$(2,m,l)$-tight} if it is $(2,m,l)$-sparse and $|E(G)|=2|\overline{V(G)}|+m|V_0(G)|-l$. We abbreviate $(2,2,l)$-sparse and $(2,2,l)$-tight to $(2,l)$-sparse and $(2,l)$-tight, respectively. 

\subsubsection{Balancedness}
We say a $\Gamma$-gain graph $(G,\psi)$ (equivalently, $G,E(G)$) is \textit{balanced} if $\left<G\right>=\{\text{id}\}$. Otherwise, we say $(G,\psi)$ (equivalently, $G,E(G)$) is \textit{unbalanced}. Lemma 2.4 in \cite{jzt2016} states that $(G,\psi)$ is balanced if and only if it has an equivalent $\Gamma$-gain graph $(G,\psi')$ with $\psi'(e)=\text{id}$ for all $e\in E(G)$. The following result is proved in Section 4 of \cite{Ikeshita}, in the case when $V_0(G)=\emptyset.$ It is straightforward to see that the same arguments can be used to show that the results still hold when $V_0(G)\neq\emptyset.$ See \cite{PhDThesis} for details.  

\begin{proposition}
    \label{union of balanced and [...] is [...]}
    Let $(G,\psi)$ be a $\Gamma$-gain graph and $H_1,H_2$ be connected subgraphs of $G$, such that $H_1\cap H_2$ is connected, with no fixed cut-vertex. Let $H:=H_1\,\cup\,H_2$. The following hold:
    \begin{itemize}
        \item[(i)] If $H_1$ is balanced, then $\left<H\right>=\left<H_2\right>$. In particular, if $H_2$ is balanced, then so is $H$.
        \item[(ii)] If $\left<H_1\right>\simeq\mathbb{Z}_p$ for some prime $p$, and $H_1\cap H_2$ is unbalanced, then $\left<H_1\right>\simeq\left<H_1\cap H_2\right>$ and $\left<H_2\right>\simeq\left<H\right>$.
    \end{itemize}
\end{proposition}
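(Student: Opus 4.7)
The plan is to reduce both parts to a single structural claim: for any connected subgraphs $H_1, H_2$ of a $\Gamma$-gain graph whose intersection is connected with no fixed cut-vertex, one has $\left<H\right>=\left<H_1\right>\cdot\left<H_2\right>$ as subgroups of the abelian group $\Gamma$. Assuming this claim, part (i) is immediate: $H_1$ balanced means $\left<H_1\right>=\{\text{id}\}$, so $\left<H\right>=\left<H_2\right>$. For part (ii), observe that $\left<H_1\cap H_2\right>$ is a subgroup of $\left<H_1\right>\simeq\mathbb{Z}_p$; since $H_1\cap H_2$ is unbalanced, this subgroup is non-trivial, and primality of $p$ forces $\left<H_1\cap H_2\right>=\left<H_1\right>$. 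As $\left<H_1\cap H_2\right>\subseteq\left<H_2\right>$, this gives $\left<H_1\right>\subseteq\left<H_2\right>$, and the claim then yields $\left<H\right>=\left<H_1\right>\cdot\left<H_2\right>=\left<H_2\right>$.

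To establish the structural claim I would fix a base vertex $v^{\star}\in V(H_1\cap H_2)\setminus\{v_0\}$; since $\Gamma$ is abelian, $\left<H\right>$ is generated by the gains of closed walks at $v^{\star}$ with no fixed vertex. Given such a closed walk $W$, I would break it into maximal consecutive subwalks lying entirely in $H_1$ or entirely in $H_2$; every transition between a $H_1$-piece and an $H_2$-piece occurs at some vertex $u\in V(H_1\cap H_2)$. At each transition point I would splice in a path $P_u$ from $u$ to $v^{\star}$ inside $H_1\cap H_2$ that avoids $v_0$, followed by its reverse. The no-fixed-cut-vertex hypothesis ensures such a $P_u$ exists: either $v_0\notin H_1\cap H_2$ and the hypothesis is vacuous, or removing $v_0$ from $H_1\cap H_2$ still leaves it connected. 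Inserting a path together with its reverse preserves the total gain, and after this surgery $W$ is expressed as a concatenation of closed walks based at $v^{\star}$, each lying entirely in $H_1$ or in $H_2$ and each avoiding $v_0$. Thus $\psi(W)\in\left<H_1\right>\cdot\left<H_2\right>$, and the reverse inclusion is trivial.

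The main obstacle is the careful handling of the fixed vertex $v_0$ in this splicing argument, which is the reason the no-fixed-cut-vertex hypothesis is imposed on $H_1\cap H_2$: without it, the closing paths used to rewrite $W$ might be forced through $v_0$ and fail to contribute to the subgroups $\left<H_1\right>$ and $\left<H_2\right>$. A secondary but cleaner route for part (i) is to invoke Lemma~\ref{lemma: forests can have identity gains} together with the fact that $H_1$ balanced admits an equivalent gain function with $\psi|_{E(H_1)}=\text{id}$; after switching to this gauge, the gain of any closed walk in $H$ with no fixed vertex is determined solely by its $H_2$-edges, so $\left<H\right>\subseteq\left<H_2\right>$, and the reverse inclusion is obvious. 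Minor edge cases, such as $H_1\cap H_2$ being a single vertex or $V_0(G)=\emptyset$, are straightforward.
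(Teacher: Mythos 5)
Your argument is essentially correct, and it is worth noting that the paper itself contains no proof of this proposition: it defers to Section 4 of \cite{Ikeshita} for the case $V_0(G)=\emptyset$ and to \cite{PhDThesis} for the general case. Your structural claim that $\left<H\right>=\left<H_1\right>\cdot\left<H_2\right>$, proved by decomposing a closed walk into maximal $H_1$- and $H_2$-pieces and closing each piece up with a path and its reverse inside $H_1\cap H_2$ avoiding $v_0$, is exactly the standard argument (cf.\ Lemma 2.2 of \cite{jzt2016} in the free-action case), and the role you assign to the no-fixed-cut-vertex hypothesis --- guaranteeing that the closing paths inside $H_1\cap H_2$ can be chosen to avoid $v_0$ --- is precisely the new ingredient needed in the non-free setting. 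The deductions of (i) and (ii) from the claim are sound; in (ii) you in fact obtain the equality $\left<H\right>=\left<H_2\right>$, which is stronger than the stated isomorphism.

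Two points need tightening. First, the intermediate assertion that $\left<H\right>$ is generated by gains of closed walks based at $v^{\star}$ is not justified: the hypotheses do not prevent $v_0$ from being a cut-vertex of $H_1$, $H_2$ or $H$ itself, in which case a closed walk avoiding $v_0$ that lies in a component of $H-v_0$ not containing $v^{\star}$ cannot be conjugated to a closed walk at $v^{\star}$ avoiding $v_0$. The assertion is also unnecessary: your splicing applies verbatim to an arbitrary closed walk $W$ of $H$ with no fixed vertex (if $W$ uses edges of both $E(H_1)\setminus E(H_2)$ and $E(H_2)\setminus E(H_1)$, its transition vertices lie in $V(H_1\cap H_2)\setminus\{v_0\}$ and each spliced piece is a closed walk in $H_1$ or in $H_2$ avoiding $v_0$; otherwise $\psi(W)$ lies in $\left<H_1\right>$ or $\left<H_2\right>$ outright), so this reduction should simply be dropped. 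Second, the ``cleaner route'' for (i) is incomplete as stated: after gauge-fixing $\psi$ to be the identity on $E(H_1)$, knowing that $\psi(W)$ is a product of gains of $H_2$-edges does not place it in $\left<H_2\right>$, which is generated by gains of closed walks of $H_2$ rather than by gains of individual edges; one still has to re-close the $H_2$-segments of $W$ through $H_1\cap H_2$, i.e.\ repeat the splicing. Since that route is offered only as an alternative, neither issue affects the correctness of your main argument.
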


\subsubsection{Near-balancedness}
Let $(G,p)$ be a $\Gamma$-gain graph with $V_0(G)=\emptyset$, and let $v\in V(G),\delta\in\Gamma$. We say $(G,p)$ (equivalently, $G,E(G)$) is \textit{near-balanced} with \textit{base vertex} $v$ and \textit{gain} $\delta$ if it is unbalanced, and all closed walks $W$ starting at $v$ and not containing $v$ as an internal vertex have gain $\textrm{id},\delta$ or $\delta^{-1}$. If $\left<G\right>\simeq\mathbb{Z}_2,\mathbb{Z}_3$, then $G$ is trivially near-balanced. Hence, we say that $G$ is \textit{proper near-balanced} if it is near-balanced and $\left<G\right>\not\simeq\mathbb{Z}_2,\mathbb{Z}_3$. Lemma 4.1 in \cite{Ikeshita} shows that $(G,\psi)$ is proper near-balanced if and only if it is unbalanced, and there is some $\delta\in\Gamma$ and some $\Gamma$-gain graph $(G,\psi')$ equivalent to $(G,\psi)$, such that $\psi'(e)\in\{\text{id},\delta\}$ for all edges $e\in E(G)$ directed to $v$, and $\psi'(e)=\text{id}$ for all edges $e\in E(G)$ not incident to $v$.  If a $\Gamma$-gain graph $(G,\psi)$ is near-balanced and not $(2,1)$-sparse, then it is easy to see that the rigidity matrix of any $\mathcal{C}_k$-generic realisation of its $\Gamma$-lifting is row dependent 
(see Figure~\ref{fig: examples of S_0(k,j)}(a,b)).
The following statement (and its proof) are slight modifications of Lemmas 4.4, 4.5, 4.6 and 4.10 in \cite{Ikeshita} (for details, see \cite{PhDThesis}).


\begin{proposition}
\label{union of [...] and [...] is [...]}
   Let $(G,\psi)$ be a $\Gamma$-gain graph and $H_1,H_2$ be subgraphs of $G$ with $V_0(H_1)=V_0(H_2)=\emptyset$, and let $H:=H_1\cup H_2$. The following hold:
       \begin{itemize}
        \item[(i)] Suppose that, for $1\leq i\leq 2$, $H_i$ is proper near-balanced, $|E(H_i)|=2|V(H_i)|$,       
         and there is an edge $f_i\in E(H_i)$ such that $H_i-f_i$ is $(2,1)$-tight. If $H_1\cap H_2$ is proper near-balanced and $(2,1)$-tight, then $H$ is proper near-balanced.
       \item[(ii)]  Suppose that $H_1$ is proper near-balanced, $|E(H_1)|=2|V(H_1)|$, and there is an edge $f_1\in E(H_1)$ such that $H_1-f_1$ is $(2,1)$-tight. If $H_2$ is connected and balanced, and $H_1\cap H_2$ is connected, balanced and $(2,3)$-tight, then $H$ is proper near-balanced.
       \item[(iii)] Suppose $H_1$ is balanced, $|E(H_1)|=2|V(H_1)|-2$, and there is an edge $f_1\in E(H_1)$ such that $H_1-f_1$ is $(2,3)$-tight. If $H_2$ is connected and balanced, and $H_1\cap H_2$ consists of two connected components, one of which is an isolated vertex $v$, then $H_1\cup H_2$ is near-balanced with base vertex $v$.
       \item[(iv)] If $H_1,H_2$ are connected, $H_1$ is proper near-balanced, $\left<H_2\right>\simeq\mathbb{Z}_p$ for some prime $p$, and $H_1\cap H_2$ is connected and unbalanced, then $\left<H_1\right>\simeq\left<H_2\right>\simeq\left<H_1\cap H_2\right>\simeq\left<H\right>$. 
    \end{itemize}
\end{proposition}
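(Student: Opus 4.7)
The plan is to adapt the proofs of Lemmas 4.4, 4.5, 4.6 and 4.10 of \cite{Ikeshita} (one for each of (i)--(iv)) to the present setting. Since $V_0(H_1) = V_0(H_2) = \emptyset$, the subgraphs in question and their intersections are genuine $\Gamma$-gain graphs with no fixed vertex, so the only adjustments needed are bookkeeping to check that the relevant $(2,m,l)$-sparsity counts reduce correctly to the familiar $m=2$ case. The main engine throughout is Lemma 4.1 of \cite{Ikeshita}, which switches any proper near-balanced $\Gamma$-gain graph into a normal form in which every edge has gain $\text{id}$ except possibly those incident to a distinguished base vertex, which carry gain in $\{\text{id}, \delta\}$ for a fixed $\delta$. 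Combined with Lemma~\ref{lemma: forests can have identity gains}, this reduces each part to a switching-alignment argument backed by a sparsity-count comparison.

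For (i), I first switch $H_1$ and $H_2$ individually into their normal forms with base vertices $v_1, v_2$ and gains $\delta_1, \delta_2$. Because $H_1 \cap H_2$ is itself proper near-balanced and $(2,1)$-tight, it has a unique base vertex $v \in V(H_1 \cap H_2)$, and the counting $|E(H_i)| = 2|V(H_i)|$ together with $(2,1)$-tightness of $H_i - f_i$ forces the unique non-identity edge in the normal form of $H_i$ to participate in a closed walk inside $H_1 \cap H_2$. This in turn forces $v_1 = v_2 = v$ and $\delta_1 = \delta_2$ up to inversion. Aligning the switchings produces a global normal form on $H$ of the Lemma~4.1 type, establishing proper near-balancedness. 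Part (ii) uses the same template, with $H_2$ switched to all-identity gains via its balancedness (instead of into the near-balanced normal form), and with the $(2,3)$-tightness of the balanced intersection used to pin the base vertex of $H_1$ inside $V(H_1 \cap H_2)$ and align it with the $H_2$-switching.

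For (iii), I switch $H_1$ to all-identity gains and then try to extend the switching to $H_2$ while preserving the normal form on $H_1$. Because $H_1 \cap H_2$ splits into a connected component $C$ and the isolated vertex $v$, the switching values on $V(C)$ are determined up to a single global shift, but the value at $v$ may be inconsistent with making $\psi|_{H_2}$ all-identity. Any such mismatch is a single element $\delta \in \Gamma$ and shifts the gain of exactly those $H_2$-edges incident to $v$ (which by isolation are disjoint from $E(H_1)$). Thus every edge of $H$ has gain $\text{id}$ except the $H_2$-edges at $v$, which carry gain $\delta$; when $\delta \neq \text{id}$ this is precisely the near-balanced normal form with base vertex $v$, and the extra-edge hypothesis on $H_1$ combined with connectivity of $H_2$ (and hence of $H_1$, since $H_1 - f_1$ being $(2,3)$-tight forces connectedness) guarantees that a closed walk going from $v$ through $H_1$ to $C$ and back through $H_2$ realises $\delta$ as a genuine gain. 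For (iv), the argument is purely group-theoretic: proper near-balancedness of $H_1$ gives $\langle H_1 \rangle = \langle \delta \rangle$, and since $H_1 \cap H_2$ is connected and unbalanced, $\langle H_1 \cap H_2 \rangle$ is a nontrivial subgroup of $\langle H_2 \rangle \cong \mathbb{Z}_p$, so by primality $\langle H_1 \cap H_2 \rangle \cong \mathbb{Z}_p$. The base vertex of $H_1$ must lie in $V(H_1 \cap H_2)$, for otherwise the restriction of $H_1$'s normal-form switching to $H_1 \cap H_2$ would be all-identity and $H_1 \cap H_2$ would be balanced; hence some $\delta$-gain edge of $H_1$ lies in $H_1 \cap H_2$, giving $\langle \delta \rangle \leq \langle H_1 \cap H_2 \rangle$ and hence equality. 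A final application of the preceding Proposition, part~(ii), yields $\langle H \rangle \cong \mathbb{Z}_p$, closing the chain.

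The main obstacle I expect lies in parts (i) and (ii): after placing $H_1$ and $H_2$ in their respective normal forms, one must use the sparsity-tightness hypotheses on the intersection to force the base vertex of each $H_i$ to lie inside $V(H_1 \cap H_2)$ and to coincide with the base vertex of the intersection (in (i)) or with an appropriate vertex of $H_1 \cap H_2$ (in (ii)). Equivalently, one must rule out configurations in which the redundant edge $f_i$ sits outside $H_1 \cap H_2$, since such configurations would produce normal forms that cannot be aligned. A subsidiary but easier obstacle in (iii) is confirming that the isolation of $v$ in $H_1 \cap H_2$ is precisely what permits the switching mismatch to reduce to a single element of $\Gamma$, rather than a more complicated $\Gamma$-valued obstruction that could not be absorbed into the near-balanced normal form.
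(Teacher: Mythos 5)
Your proposal takes exactly the route the paper itself takes: the paper offers no self-contained proof of this proposition, stating only that the statement and its proof are slight modifications of Lemmas 4.4, 4.5, 4.6 and 4.10 of \cite{Ikeshita} (with details deferred to \cite{PhDThesis}), and your switching/normal-form adaptations of those four lemmas, with the sparsity counts used to pin down and align the base vertices, are precisely the intended arguments. The one point to tighten is in (iii): the hypotheses alone do not force $H_1\cup H_2$ to be unbalanced (the switching mismatch $\delta$ at the isolated vertex $v$ can be the identity, e.g.\ when all gains are trivial), so the conclusion there should really be read as ``balanced, or near-balanced with base vertex $v$'' --- which is how the proposition is in fact used later, where sparsity separately excludes the balanced case.
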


\subsubsection{S(k,j) gain graphs}
Now, let $k\geq4$, and for $0\leq j\leq k-1,-1\leq i\leq 1$, define the following sets:
\begin{equation*}
    S_i(k,j)=\begin{cases}
        \{n\in\mathbb{N}:n\geq2,n|k,j\equiv i(\bmod n)\} & \text{if }j\text{ is even}\\
        \{n\in\mathbb{N}:n\geq2,n|k,n\neq2,j\equiv i(\bmod n)\} & \text{if }j\text{ is odd}
    \end{cases}
\end{equation*}
Let $(G,\psi)$ be a $\mathbb{Z}_k$-gain graph. If $\left<G\right>\simeq\mathbb{Z}_n$ for some $n\in S_0(k,j)$, we say $G$ (equivalently, $G,E(G)$) is $S_0(k,j)$. If $\left<G\right>\simeq\mathbb{Z}_n$ for some $n\in S_{-1}(k,j)\cup S_1(k,j)$, then we say $G$ (equivalently, $G,E(G)$) is $S_{\pm1}(k,j)$. If $G$ is either $S_0(k,j)$ or $S_{\pm1}(k,j)$, we say $G$ (equivalently, $G,E(G)$) is $S(k,j)$.

If $\left<G\right>\simeq\mathbb{Z}_n$ for some $2\leq n\leq k-1$, then the $\mathbb{Z}_k$-lifting $\tilde G$ of $(G,\psi)$ is a $\mathbb{Z}_n$-symmetric graph. Let $(\tilde G,\tilde p)$ be a $\mathcal{C}_n$-generic realisation of $\tilde G$. If $n\in S_0(k,j)$, then a $\rho_j$-symmetric infinitesimal motion of $(\tilde G,\tilde p)$ (as a $\mathcal{C}_k$-symmetric framework) is also a $\rho_0$-symmetric infinitesimal motion of $(\tilde G,\tilde p)$ (as a $\mathcal{C}_n$-symmetric framework). See Example~\ref{ex:S(k,j)} for an instance with $k=9,j=3,n=3$. Similarly, if $n\in S_{-1}(k,j)\cup S_1(k,j)$, then a $\rho_j$-symmetric infinitesimal motion of $(\tilde G,\tilde p)$ (as a $\mathcal{C}_k$-symmetric framework) is a $\rho_1$-symmetric  or a $\rho_{k-1}$-symmetric infinitesimal motion of $(\tilde G,\tilde p)$ (as a $\mathcal{C}_n$-symmetric framework). 

\begin{example}
\label{ex:S(k,j)}
Let $\Gamma=\left<\gamma\right>$ be a cyclic group of order 9, and recall that $\Gamma\simeq\mathbb{Z}_9$ through the isomorphism which maps $\gamma$ to 1. Let $(G,\psi)$ be the $\Gamma$-gain graph whose vertex set is $V(G)=\{u,v_0\}$, where $u$ is free and $v_0$ is fixed, whose edge set is $\{e=(u,u),f=(u,v_0)\}$, and whose gain function is defined by letting $\psi(e)=\gamma^3,\psi(f)=\text{id}$ (see Figure~\ref{fig: examples of S_0(k,j)}(c)). Then, the $\Gamma$-lifting $\tilde G$ of $(G,\psi)$ is also symmetric with respect to the subgroup $\mathbb{Z}_3$ of $\mathbb{Z}_9$. Take a $\mathcal{C}_3$-generic realisation $(\tilde G,\tilde p)$ of $\tilde G$. Let $m$ be a $\rho_3$-symmetric infinitesimal motion of $(\tilde G,\tilde p)$ (when seen as a $\mathcal{C}_9$-symmetric framework), and let $\Gamma'=\left<\gamma'\right>\simeq\mathbb{Z}_3$ be such that $\gamma'$ is mapped to 1 through an isomorphism. Then $m$ satisfies $m(\gamma' v^{\star})=m(\gamma^3v^{\star})=\overline{\omega_9}^{3\cdot3}C_9^3m(v^{\star})=C_3m(v^{\star})$. Hence, $m$ is a $\rho_0$-symmetric infinitesimal motion of the $\mathcal{C}_3$-symmetric framework $(\tilde G,\tilde p)$ (see Figure~\ref{fig: examples of S_0(k,j)}(d)).  
\end{example}

The proof of Lemma 2.2 in \cite{RIandST16} shows that, in the case where $V_0(G)=\emptyset$, the union of $S_i(k,j)$ graphs is also $S_i(k,j)$ under suitable conditions (see also Lemmas 4.19 and 4.20 in \cite{Ikeshita}). It is straightforward to generalise this argument to show that statements (i) and (ii) in Proposition~\ref{gcd proposition} hold. For the third statement in Proposition~\ref{gcd proposition}, we can use a very similar argument to the one used for the proof of statement (iii) in Proposition~\ref{union of [...] and [...] is [...]}. We refer the reader to \cite{PhDThesis} for details.

\begin{figure}[H]
    \centering
    \begin{tikzpicture}[scale = 0.7]
        \draw(-0.5,0) circle (0.15cm);
        \draw(2.5,0) circle (0.15cm);
        \draw(1,-0.3) circle (0.15cm);
        \draw(1,2) circle (0.15cm);
  \draw[->] (1.15,2) .. controls (1.55,2.5) and (0.45,2.5) .. (0.85,2);
  \draw[->](1,1.85) -- (1,-0.15);
  \draw[->] (0.85,-0.3) -- (-0.35,0);
  \draw[->] (1.15,-0.3) -- (2.35,0);
  \draw[->] (2.4,0.1) -- (1.1,1.9);
  \draw[->] (1.15,1.95) .. controls (2,1.3) .. (2.5,0.15);
  \draw[->] (-0.4,0.1) -- (0.9,1.9);
  \draw[->] (0.85,1.95) .. controls (0,1.3) .. (-0.5,0.15);
  \node at (-0.1,1.3) {$\gamma$};
  \node at (2.1,1.3) {$\gamma$};
  \node at (1,2.6) {$\gamma$};
  \node[below] at (1,-1.85) {(a)};
    \end{tikzpicture}\hspace{5mm}
    \begin{subfigure}[b]{.2\textwidth}
   \centering
      \begin{tikzpicture} 
        \foreach \x in {60,120,180,...,360} \draw (\x:0.75cm) -- (\x+60:0.75cm);
        \foreach \x in {0,60,120,...,360} \draw (\x:0.75cm) -- (\x+30:1cm) -- (\x+60:0.75cm);
        \foreach \x in {0,60,120,...,360} \draw (\x:0.75cm) -- (\x+30:1.375cm) -- (\x+60:0.75cm);
        \foreach \x in {0,60,120,...,360} \draw (\x:0.75cm) -- (\x+10: 1.25cm) -- (\x+30: 1.375cm);
        \foreach \x in {60,120,...,360} \draw (\x+10:1.25cm) -- (\x+30:1.375cm);
        \foreach \x in {60,120,...,360} \draw (\x+30:1cm) -- (\x+10: 1.25cm);
        \foreach \x in {60,120,...,360} \node[inner sep=1pt,circle,draw,fill] at (\x:0.75cm) {};
     \foreach \x in {60,120,...,360} \node[inner sep=1pt,circle,draw,fill] at (\x+30:1cm) {};
     \foreach \x in {60,120,...,360} \node[inner sep=1pt,circle,draw,fill] at (\x+30:1.375cm) {};
     \foreach \x in {60,120,...,360} \node[inner sep=1pt,circle,draw,fill] at (\x+10:1.25cm) {};
     \node[below] at (0,-2.3) {(b)};
      \end{tikzpicture}
\end{subfigure}\hspace{4mm}
\begin{subfigure}{.15\textwidth}
    \begin{tikzpicture}[scale = 0.75]
        \draw(-0.5,1.8) circle (0.15cm);
        \draw[fill=black](1.5,1) circle (0.15cm);
        \draw[->] (-0.35,1.8) .. controls (0.05,2.3) and (-1.05,2.3) .. (-0.65,1.8);
  \draw[->] (-0.33, 1.8) -- (1.35, 1.1);
  \node[above] at (-0.5, 2.1){$\gamma^3$};
  \node[below] at (0.25,-1.85) {(c)};
    \end{tikzpicture}
\end{subfigure}
    \begin{subfigure}[b]{.25\textwidth}
        \begin{tikzpicture} 
        \newdimen\R
   \R=1.5cm
\foreach \x in {40,80,...,360} \draw (0:0) -- (\x:\R);
\foreach \x in {40,160,280} \draw (\x:\R) -- (\x+120:\R);
\foreach \x in {80,200,320} \draw (\x:\R) -- (\x+120:\R);
\foreach \x in {120,240,360} \draw (\x:\R) -- (\x+120:\R);
\foreach \x in {40,80,...,360} \node[inner sep=1pt,circle,draw,fill] at (\x:\R) {};
\node at (160:1.8cm){$v^{\star}$};
\node at (200:1.9cm){$\gamma v^{\star}$};
\node at (240:1.8cm){$\gamma^2v^{\star}$};
\node[inner sep=1pt,circle,draw,fill] at (0:0) {};
     \node[below] at (0,-2.3) {(d)};
      \end{tikzpicture}
    \end{subfigure}
    \caption{(a) is a proper near-balanced $\Gamma$-gain graph with $\Gamma$-lifting (b). (c) is a $S_0(9,j)$ $\Gamma$-gain graph, where $|\Gamma|=9$, and (d) is its $\Gamma$-lifting. In (a,b), the unlabelled edges have gain $\text{id}$.}
    \label{fig: examples of S_0(k,j)}
\end{figure}
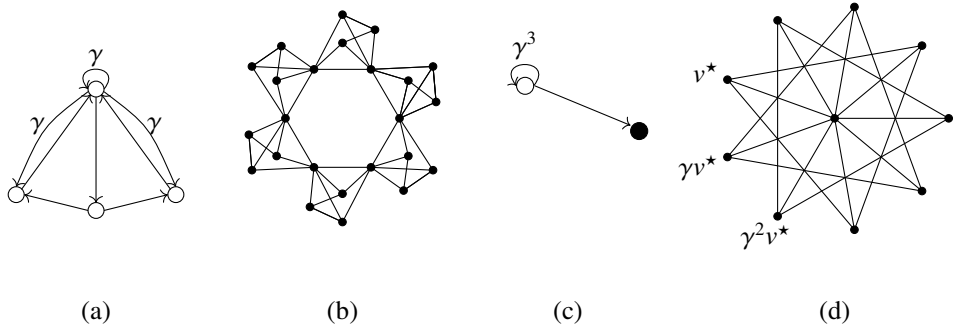

\begin{proposition}
\label{gcd proposition}
Let $k:=|\Gamma|\geq4$, and $(G,\psi)$ be a $\Gamma$-gain graph. Let $H_1,H_2$ be subgraphs of $G$ such that $\left<H_1\right>\simeq\mathbb{Z}_n$ and $\left<H_2\right>\simeq\mathbb{Z}_m$ for some positive integers $n,m$ that divide $k$. Let $g=\textrm{gcd}(n,m)$ and $l=\textrm{lcm}(n,m)$. Assume that, for some $i\in\{-1,0,1\}$, $n\in S_i(k,j)$. The following hold:
\begin{itemize}
    \item[(i)] If $g\neq1$ and $m\in S_{i'}(k,j)$ for some $i'\in\{-1,0,1\}$, then $i=i'$.
    \item[(ii)] Suppose $H_1\cap H_2$ is connected, or it is composed of two connected components, one of which is the isolated fixed vertex. If $n,m\in S_i(k,j)$, then $H$ is $S_i(k,j)$. 
    \item[(iii)] If $H_2$ is near-balanced and $H_1\cap H_2$ is connected unbalanced, then $H_1\cup H_2$ is $S_i(k,j)$.
\end{itemize}
\end{proposition}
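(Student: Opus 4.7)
The plan is to exploit throughout that $\Gamma=\mathbb{Z}_k$ is cyclic, so its subgroups are cyclic and uniquely determined by their order; in particular $\langle H_1\rangle$ and $\langle H_2\rangle$ are determined by $n$ and $m$, and the subgroup they jointly generate in $\Gamma$ has order $l=\mathrm{lcm}(n,m)$. Membership $r\in S_i(k,j)$ for $r\mid k$ amounts to the congruence $r\mid(j-i)$ together with the side condition $r\neq 2$ when $j$ is odd. Each of (i)--(iii) then reduces to a combinatorial claim about $\langle H_1\cup H_2\rangle$ plus a short divisibility check.

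For (i), the hypotheses give $n\mid(j-i)$ and $m\mid(j-i')$, so $g$ divides $i-i'$. When $g\geq 3$, the bound $|i-i'|\leq 2$ forces $i=i'$. The remaining case $g=2$ requires $n,m$ both even, which cannot occur for $k$ odd; for $k\in\{4,6\}$ a direct enumeration over the finitely many $(n,m,j)$ with $n,m\in S(k,j)$ verifies $i=i'$.

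For (ii), I first claim $\langle H\rangle\simeq\mathbb{Z}_l$. The inclusion $\langle\langle H_1\rangle,\langle H_2\rangle\rangle\subseteq\langle H\rangle$ is clear. For the reverse, I take any closed walk $W$ in $H$ avoiding the possible fixed vertex $v_0$ and decompose it into successive excursions lying entirely in $H_1$ or entirely in $H_2$, joined along walks in $H_1\cap H_2$; the connectedness hypothesis supplies the joining walks, and in the two-component case the connected non-fixed component still does so since $W$ already avoids $v_0$. Each excursion contributes a gain in $\langle H_1\rangle$ or $\langle H_2\rangle$, so $\psi(W)\in\langle\langle H_1\rangle,\langle H_2\rangle\rangle$ as required. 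Finally $n,m\mid(j-i)$ gives $l\mid(j-i)$, so $l\in S_i(k,j)$; for odd $j$ the condition $l\neq 2$ is automatic since $n,m\geq 3$ forces $l\geq 3$.

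For (iii), the key step is to show $\langle H_2\rangle\subseteq\langle H_1\rangle$, from which $\langle H\rangle=\langle H_1\rangle\simeq\mathbb{Z}_n\in S_i(k,j)$. By the near-balanced normal form recalled before Proposition~\ref{union of [...] and [...] is [...]} (together with Lemma~\ref{lemma: forests can have identity gains}), I may replace $\psi$ on $H_2$ by an equivalent gain function that assigns $\mathrm{id}$ to every edge not incident to the base vertex $v$ and a value in $\{\mathrm{id},\delta\}$ to every edge at $v$, where $\langle H_2\rangle=\langle\delta\rangle$. Since $H_1\cap H_2$ is connected and unbalanced, it contains an unbalanced cycle; in this normal form such a cycle must traverse $v$ and carry gain $\delta^{\pm 1}$, so $\delta\in\langle H_1\rangle$ and the inclusion follows. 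The main obstacle I anticipate is the bookkeeping around the fixed vertex: $\langle\cdot\rangle$ is generated only by closed walks avoiding $v_0$, and the near-balanced normal form was set up in \cite{Ikeshita,RIandST16} under $V_0(G)=\emptyset$, so one must check that both the walk-decomposition of (ii) and the normal-form argument of (iii) continue to function when a fixed vertex is present, which is precisely the extension this paper's machinery is designed to accommodate.
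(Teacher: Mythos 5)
Your strategy---reducing everything to arithmetic on the orders $n,m,l$ of the cyclic gain groups, establishing $\left<H_1\cup H_2\right>=\left<\left<H_1\right>,\left<H_2\right>\right>$ by decomposing closed walks along the connected intersection, and using the near-balanced normal form for (iii)---is exactly the argument the paper outsources to \cite{Ikeshita,RIandST16,PhDThesis}, so there is no divergence of method to report. The substantive issue is in part (i). Your argument settles $g\geq3$ in general but disposes of $g=2$ only for odd $k$ and $k\in\{4,6\}$, whereas the proposition is asserted for every $k\geq4$; and this is not a gap you could have closed, because the statement as written fails for general even $k$. For $k=12$ and $j=5$ one has $4\in S_1(12,5)$ (as $5\equiv1\pmod 4$) and $6\in S_{-1}(12,5)$ (as $5\equiv-1\pmod 6$) with $\gcd(4,6)=2\neq1$, yet $1\neq-1$. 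So your restriction must be stated as an explicit hypothesis (it holds precisely when $k$ admits no two even divisors greater than $2$ with greatest common divisor $2$, which covers all $k$ to which the paper's main theorems apply); as written, your proof presents the enumeration as exhausting the cases when it does not.

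Two smaller gaps. In (ii), when $H_1\cap H_2$ is connected but contains the fixed vertex $v_0$ non-isolated, the joining walks you need must themselves avoid $v_0$ (since $\left<\cdot\right>$ only records closed walks avoiding the fixed vertex), and if $v_0$ is a cut-vertex of $H_1\cap H_2$ they need not exist; this is why Proposition~\ref{union of balanced and [...] is [...]} carries an explicit ``no fixed cut-vertex'' hypothesis, and you should either add it here or treat that configuration separately (it is verified where (ii) is invoked, e.g.\ in Lemma~\ref{lemma:bal.int.}). In (iii), the normal form you quote is available only for \emph{proper} near-balanced graphs; if $H_2$ is near-balanced but not proper, then $\left<H_2\right>$ has prime order $2$ or $3$, and $H_1\cap H_2$ unbalanced immediately gives $\left<H_2\right>=\left<H_1\cap H_2\right>\subseteq\left<H_1\right>$, so one extra sentence covers it. Your cycle argument in the proper case---that an unbalanced cycle in normal form must pass through the base vertex and carry gain $\delta^{\pm1}$, so that $\delta$ itself, and not merely some power of $\delta$, lies in $\left<H_1\right>$---is correct and is the step most easily fumbled here.
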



\subsection{Gain sparsity of a gain graph}\label{sec:gaincounts}
We now introduce the combinatorial counts which characterise $\mathcal{C}_k$-generic infinitesimally rigid  frameworks. They are dependent on the notion of balancedness. For $2\leq j\leq k-2$, they are also dependent on the notions of near-balancedness and $S(k,j)$. 

\begin{definition}
\label{sparsity defn.}
    Let $(G,\psi)$ be a $\Gamma$-gain graph. Let $0\leq m\leq1,1\leq l \leq2$. We say $\left(G,\psi\right)$ is \emph{$(2,m,3,l)$-gain-sparse} if it is $(2,m,l)$-sparse and all of its balanced subgraphs (with non-empty edge set) are $(2,3)$-sparse. We say $(G,\psi)$ is \textit{$(2,m,3,l)$-gain-tight} if it is $(2,m,3,l)$-gain-sparse and $(2,m,l)$-tight.
\end{definition}

Let $k:=|\Gamma|\geq4$, and $(\tilde G,\tilde p)$ be a $\mathcal{C}_k$-generic framework, whose underlying graph has $\Gamma$-gain graph $(G,\psi)$. Theorem 7.13 in \cite{SchulzeLP2024} states that $(\tilde G,\tilde p)$ is $\rho_0$-symmetrically isostatic if and only if $(G,\psi)$ is $(2,0,3,1)$-gain tight, and that it is $\rho_1$-symmetrically isostatic and $\rho_{k-1}$-symmetrically isostatic if and only if $(G,\psi)$ is $(2,1,3,2)$-gain tight. However, when $2\leq j\leq k-2$, $(G,\psi)$ must satisfy more refined conditions, in order for $(\tilde G,\tilde p)$ to be $\rho_j$-symmetrically isostatic. Hence, we define the following.


\begin{definition}
\label{definition of sparsity, higher order rotation}
    Let $k:=|\Gamma|\geq4$, and $2\leq j\leq k-2$, let $(G,\psi)$ be a $\Gamma$-gain graph, and let $F\subseteq E(G)$. Let $C(F)$ denote the set of connected components of $F$. We define the map $\alpha_k^j:C(F)\rightarrow\{0,1,2,3\}$ by letting  
    \begin{equation*}
    \alpha_k^j(X)=\begin{cases}
        0 & \text{if }X\text{ is balanced}\\
        1 & \text{if }j\text{ is odd and }\left<X\right>\simeq\mathbb{Z}_2\\
        2-|V_0(X)| & \text{if }X\text{ is }S_{\pm1}(k,j)\\
        2-2|V_0(X)| & \text{if }X\text{ is }S_0(k,j)\text{ or } |V_0(X)|=0 \text{ and } X \text { is proper near-balanced}\\
        3-2|V_0(X)| & \text{otherwise}
    \end{cases}
\end{equation*} 
    Since the notion of near-balancedness is only defined for graphs with no fixed vertex, if $X$ is proper near-balanced, then $\alpha_k^j(X)=2$.
    
    We also define the function $f_k^j$ on $2^{E(G)}$ by letting $f_k^j(F)=\sum_{X\in C(F)}\left\{2|V(X)|-3+\alpha_k^j(X)\right\}.$ We say $(G,\psi)$ is \textit{$\mathbb{Z}^{j}_k$-gain sparse} if $|E(H)|\leq f_k^j(E(H))$ for all subgraphs $H$ of $G$ with non-empty edge set. We say $(G,\psi)$ is \textit{$\mathbb{Z}^{j}_k$-gain tight} if it is $\mathbb{Z}^{j}_k$-gain sparse and $|E(G)|=f_k^j(E(G))$.
\end{definition}

\begin{remark}
\label{the count is matroidal}
    By Lemma 4.19(d) in \cite{Ikeshita}, $\alpha_k^j$, and hence $f_k^j$, are well-defined. Moreover, it will follow from one of the main results of this paper (Theorem ~\ref{theorem: final theorem for k-fold symmetry pt.2}) that the count in Definition~\ref{definition of sparsity, higher order rotation} is matroidal if $4\leq k<1000$ is odd or $k=4,6$. We expect that the same is true for all other symmetry groups in the plane using a proof that is analogous to the one given in \cite{Ikeshita}.
\end{remark} 

In \cite{SchulzeLP2024}, we showed that, if $2\leq j\leq k-2,$ then this count is necessary to have a $\rho_j$-symmetrically isostatic framework (see Proposition~\ref{necessary conditions for higher rotation} below). We aim to show that it is also sufficient.

\begin{proposition}[Proposition 5.4 in \cite{SchulzeLP2024}]
\label{necessary conditions for higher rotation}
For $k:=|\Gamma|\geq4$, let $(\Tilde{G},\Tilde{p})$ be a $\mathcal{C}_k$-symmetric framework, and let $(G,\psi,p)$ be the $\Gamma$-gain framework of $(\tilde{G},\tilde p)$. Let $2\leq j\leq k-2$. If $(\Tilde{G},\Tilde{p})$ is $\rho_j$-symmetrically isostatic, then $(G,\psi)$ is $\mathbb{Z}_k^j$-gain tight.
\end{proposition}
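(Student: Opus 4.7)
The plan is to work throughout with the orbit rigidity matrix $O_j(G,\psi,p)$, which, by the construction recalled in Section~\ref{sec:gaingraphs}, has the same rank as the $j$-th block of the symmetry-adapted rigidity matrix of $(\tilde G,\tilde p)$. Since $(\tilde G,\tilde p)$ is $\rho_j$-symmetrically isostatic, $O_j(G,\psi,p)$ has linearly independent rows and its nullity equals the dimension of trivial $\rho_j$-symmetric infinitesimal motions of $(\tilde G,\tilde p)$, which is $0$ for $2\le j\le k-2$ by the trivial-motion proposition of Section~\ref{sec:infrig}. For every subgraph $H\subseteq G$, the submatrix $O_j(H,\psi|_H,p)$ formed by the rows indexed by $E(H)$ is then row-independent, and since its columns partition across the connected components of $H$ (with the fixed vertex in at most one component), it suffices to prove the per-component rank bound
\begin{equation*}
\mathrm{rank}\,O_j(X,\psi|_X,p) \;\le\; 2|V(X)| - 3 + \alpha_k^j(X)
\end{equation*}
for each connected component $X$ of $H$. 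Summing over components gives $|E(H)|\le f_k^j(E(H))$; the tightness $|E(G)|=f_k^j(E(G))$ then follows by applying the same inequality to $H=G$ together with the fact that $\rho_j$-symmetric isostaticity makes the column-minus-nullity estimate sharp.

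The per-component bound is proved by a case analysis on $\langle X\rangle$, on whether $v_0\in V(X)$, and on how $\rho_j$ restricts to $\langle X\rangle$. The core ingredients are the column count $2|\overline{V(X)}|+m_j|V_0(X)|$, with $m_j=0$ for $2\le j\le k-2$, and the subspace of $\ker O_j(X,\psi|_X,p)$ coming from $\rho_j$-symmetric rigid motions of $\tilde X$. For $X$ with $\langle X\rangle\simeq\mathbb{Z}_n$ and $v_0\notin V(X)$, the lifting $\tilde X$ splits into $k/n$ connected $\mathbb{Z}_n$-symmetric components; the $\rho_j$-symmetric motions of $\tilde X$ correspond to $\rho_{j\bmod n}$-symmetric motions of a single component viewed as a $\mathcal{C}_n$-symmetric framework, and applying the trivial-motion proposition to this subgroup yields exactly the counts encoded by $\alpha_k^j$: one rotation if $n\in S_0(k,j)$, one translation if $n\in S_{\pm1}(k,j)$, and none otherwise. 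For balanced $X$, every ``sector'' of $\tilde X$ is an isomorphic planar copy of $X$, so the $\mathbb{R}^2$-rigidity count $|E(X)|\le 2|V(X)|-3$ applies to any single sector; any excess would produce one row dependency per sector which, under the standard decomposition of redundancies across irreducible representations, forces a dependency in every $\rho_j$-block and contradicts row independence of $O_j(X)$.

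The hard part will be the proper near-balanced case, the exceptional ``$j$ odd, $\langle X\rangle\simeq\mathbb{Z}_2$'' case, and the balanced or $S_0(k,j)$ cases when $v_0\in V(X)$, since in each of these the naive column-minus-trivial-motion estimate falls short of $\alpha_k^j$ by one, and one must exhibit a further row dependency in $O_j(X,\psi|_X,p)$. For the near-balanced case, I would use Lemma~4.1 of \cite{Ikeshita} to gauge $(X,\psi|_X)$ so that only edges directed into the base vertex $v$ carry gain in $\{\mathrm{id},\delta\}$ and all other edges have identity gain; in this gauge an explicit left-kernel vector of $O_j(X,\psi|_X,p)$ can be constructed from a rigid motion on $X\setminus\{v\}$ together with a compensating phase-twist at $v$, forcing $\mathrm{rank}\,O_j(X)\le 2|V(X)|-1$ and matching $\alpha_k^j(X)=2$. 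An analogous argument, combined with Proposition~\ref{gcd proposition}(iii) to control the subgroup behaviour, handles the $\mathbb{Z}_2$ exception; and for balanced or $S_0(k,j)$ subgraphs containing $v_0$ the extra $-1$ comes from applying the planar rigidity bound $|E(X)|\le 2|V(X)|-3$ to a single sector of $\tilde X$, whose over-constraint symmetrises to a row dependency in every $O_j$-block. Assembling all the per-component estimates proves $\mathbb{Z}_k^j$-gain sparsity, and the tightness equality obtained in the first paragraph completes the proof.
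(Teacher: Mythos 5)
First, note that this paper does not prove Proposition~\ref{necessary conditions for higher rotation} at all: it is imported verbatim as Proposition~5.4 of \cite{SchulzeLP2024}, so there is no in-paper argument to compare against. Your outline does reproduce the standard strategy used there (and in \cite{Ikeshita,jzt2016} for the free-action case): row-independence of $O_j(G,\psi,p)$ passes to every edge-induced submatrix, the matrix decomposes over connected components, and the per-component bound $\operatorname{rank}O_j(X)\le 2|V(X)|-3+\alpha_k^j(X)$ is obtained by exhibiting, for each value of $\alpha_k^j(X)$, a kernel of dimension $3-\alpha_k^j(X)-2|V_0(X)|$ coming from rigid motions compatible with $\rho_{j\bmod n}$ on the $\mathbb{Z}_n$-components of the lifting. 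Your tightness step is also fine once you add the (easy) observation that $\alpha_k^j(X)\le 3-2|V_0(X)|$ always, so $f_k^j(E(G))\le 2|\overline{V(G)}|=\operatorname{rank}O_j(G)=|E(G)|\le f_k^j(E(G))$.

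Two points deserve correction or more care. First, you conflate left and right kernels: a rigid motion of a sector of $\tilde X$ (or your ``phase-twist'' at the base vertex) produces a vector in the \emph{right} kernel of $O_j(X,\psi|_X,p)$, i.e.\ a $\rho_j$-symmetric infinitesimal motion, not a row dependency; the rank bound you need is $\operatorname{rank}\le(\#\text{columns})-\dim(\text{right kernel})$, so the constructions should be phrased consistently on that side (your balanced-case self-stress argument is the one place where the left-kernel route is genuinely the natural one, and there it does work since the orbit of a one-sector self-stress spans a copy of the regular representation). Second, the proper near-balanced case is the real technical content and your sketch stops short of it: after gauging via Lemma~4.1 of \cite{Ikeshita}, one must verify that the proposed motion satisfies the edge equations on \emph{all} edges incident to the base vertex, whose lifts attach a single copy of $X-v$ to two distinct lifts of $v$ (via the $\mathrm{id}$- and $\delta$-labelled edges); a single rotation vanishing at one lift of $v$ does not handle both, and the correct kernel vector is a specific combination whose existence is exactly what Lemmas~4.4--4.10 of \cite{Ikeshita} establish. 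As a proposal the architecture is right, but that case cannot be waved through.
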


\section{Gain graph extensions}\label{sec:red}
The main result of this paper relies on a proof by induction on the order of a $\Gamma$-gain graph. Hence, we introduce some moves, known as \textit{extensions} (and their counterparts, known as \textit{reductions}), which add one or more free vertices to a $\Gamma$-gain graph. Throughout this section, we let $(G,\psi)$ be a $\Gamma$-gain graph, and we will construct a $\Gamma$-gain graph $(G',\psi')$ by applying an extension to $(G,\psi)$. 

\begin{definition}
\label{defn: 0-extension}
A \textit{0-extension} chooses two vertices $v_1,v_2\in V(G)$ ($v_1,v_2$ may coincide, provided they are free) and adds a free vertex $v$, together with two edges $e_1=(v,v_1),e_2=(v,v_2)$. We let $\psi'(e)=\psi(e)$ for all $e\in E(G)$, and label the new edges freely, provided $v_1,v_2$ do not coincide. If $v_1,v_2$ coincide, we choose $\psi'$ such that $\psi'(e_1)\neq\psi'(e_2)$. 
\end{definition}

\begin{definition}
\label{defn: loop-1-extension}
A \textit{loop-1-extension} chooses a vertex $u\in V(G)$ and adds a free vertex $v$ to $V(G)$, as well as an edge $e=(v,u)$ and a loop $f=(v,v)$. We let $\psi'(e')=\psi(e')$ for all $e'\in E(G)$, we label $e,f$ freely, with the condition that $\psi'(f)\neq\textrm{id}$. 
\end{definition}

\begin{definition}
\label{defn: 1-extension}
A \textit{1-extension} chooses a vertex $u\in V(G)$ and an edge $e=\left(v_1,v_2\right)\in E(G)$ ($v_1,v_2,u$ may coincide, provided they are free; any pair of free vertices in $\{v_1,v_2,u\}$ may coincide). It removes $e$ and adds a free vertex $v$ to $V(G)$, as well as the edges $e_1=(v,v_1),e_2=(v,v_2),e_3=(v,u)$. We let $\psi'(f)=\psi(f)$ for all $f\in V(G)$, we label $e_1,e_2$ such that $\psi'(e_1)^{-1}\psi'(e_2)=\psi(e)$, and $e_3$ is labelled such that, if there is a 2-cycle $e':=e_3e_i$ for some $1\leq i\leq2$, then $\psi'(e')\neq\textrm{id}$. 
\end{definition}

The following move may only be applied to a $\Gamma$-gain graph $(G,\psi)$ such that $|\Gamma|$ is even and $V_0(G)=\{v_0\}.$ Recall that $\Gamma=\left<\gamma\right>$ is isomorphic to $\mathbb{Z}_k$ through the isomorphism which maps 1 to $\gamma$. 

\begin{definition}
    A \textit{2-vertex-extension} adds two free vertices $v_1,v_2$ to $V(G)$, as well as the edges $e_1=(v_1,v),e_2=(v_2,v),f_1=(v_1,v_2)$ and $f_2=(v_2,v_1).$ We let $\psi'(e)=\psi(e)$ for all $e\in E(G)$, we label $f_1,f_2$ with $\textrm{id}$ and $\gamma^{k/2}$, respectively, and $f_1,f_2$ are labelled freely.
\end{definition}

The inverse operations of a 0-extension, loop-1-extension, 1-extension and 2-vertex-extension are called \textit{0-reduction}, \textit{loop-1-reduction}, \textit{1-reduction} and \textit{2-vertex-reduction}, respectively. Figure~\ref{extensions image} gives an illustration of each extension, together with the corresponding reduction.

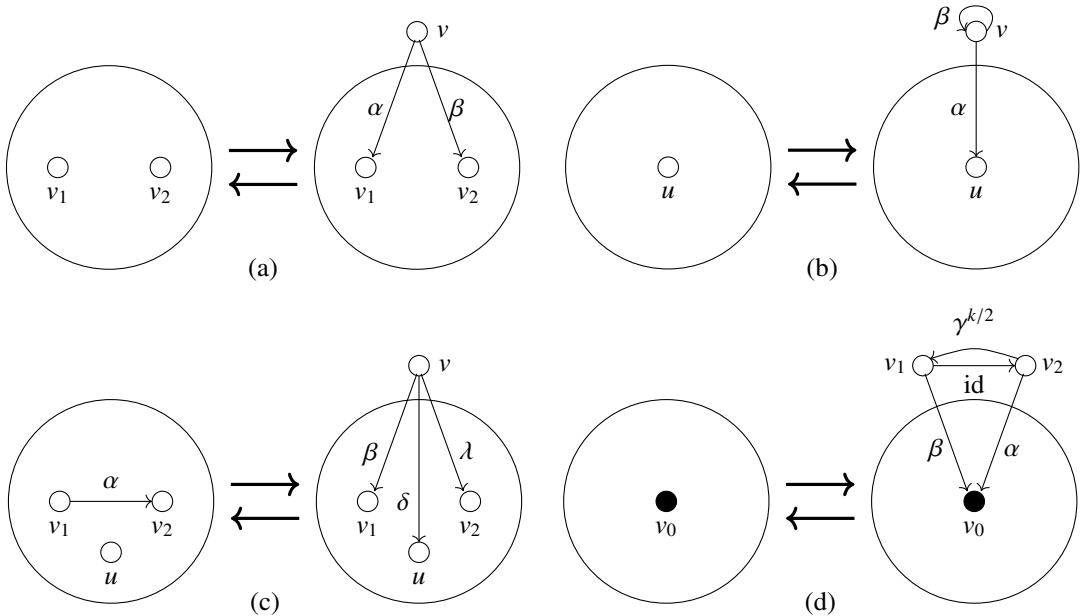
\begin{figure}[H]
    \centering
    \begin{tikzpicture}[scale = 0.9]
    \node at (5, -1.5){(a)};
  \draw (2.75,0) circle (1.5cm);
  \draw(2,0) circle (0.15cm);
  \draw(3.5,0) circle (0.15cm);
  \node[below] at (2,-0.15){$v_1$};
  \node[below] at (3.5,-0.15){$v_2$};

  \draw[->, very thick] (4.5,0.25) -- (5.5,0.25);
  \draw[->, very thick] (5.5,-0.25) -- (4.5,-0.25);

  \draw(6.5,0) circle (0.15cm);
  \draw(8,0) circle (0.15cm);
  \draw (7.25,0) circle (1.5cm);
  \draw (7.25,2) circle (0.15cm);
  \node[below] at (6.5,-0.15){$v_1$};
  \node[below] at (8,-0.15){$v_2$};
  \node[right] at (7.4, 2){$v$};
  \draw[->] (7.28, 1.88) -- (7.9, 0.15);
  \draw[->] (7.22, 1.88) -- (6.6, 0.15);
  \node[left] at (6.9,0.85) {$\alpha$};
  \node[right] at (7.6,0.85) {$\beta$};
\end{tikzpicture}\hspace{5mm}
    \begin{subfigure}[b]{0.45\textwidth}
        \begin{tikzpicture}[scale = 0.9]
        \node at (5, -1.5){(b)};
  \draw (2.75,0) circle (1.5cm);
  \draw(2.75,0) circle (0.15cm);
  \node[below] at (2.75,-0.15){$u$};

  \draw[->, very thick] (4.5,0.25) -- (5.5,0.25);
  \draw[->, very thick] (5.5,-0.25) -- (4.5,-0.25);

  \draw(7.25,0) circle (0.15cm);  
  \draw (7.25,0) circle (1.5cm);
  \draw (7.25,2) circle (0.15cm);
  \node[below] at (7.25,-0.15){$u$};
  \node[right] at (7.4, 2){$v$};
  \draw[->] (7.25, 1.85) -- (7.25, 0.15);
  \node[left] at (7.25,0.85) {$\alpha$};
  \draw[->] (7.4,2) .. controls (7.8,2.5) and (6.7,2.5) .. (7.1,2); 
  \node[left] at (7,2.2) {$\beta$};
\end{tikzpicture}
    \end{subfigure}\vspace{1mm}
    \begin{subfigure}[b]{0.45\textwidth}
        \begin{tikzpicture}[scale = 0.9]
        \node at (5, -1.5){(c)};
  \draw (2.75,0) circle (1.5cm);
  \draw(2,0) circle (0.15cm);
  \draw(3.5,0) circle (0.15cm);
  \draw(2.75,-0.75) circle (0.15cm);
  \draw[->](2.15,0) -- (3.35,0);
  \node[below] at (3.5,-0.15){$v_2$};
  \node[below] at (2,-0.15){$v_1$};
  \node[below] at (2.75,-0.9){$u$};
  \node[above] at (2.75, 0.05){$\alpha$};

  \draw[->, very thick] (4.5,0.25) -- (5.5,0.25);
  \draw[->, very thick] (5.5,-0.25) -- (4.5,-0.25);

  \draw(6.5,0) circle (0.15cm);  
  \draw (7.25,0) circle (1.5cm);
  \draw (8,0) circle (0.15cm);
  \draw (7.25, 2) circle (0.15cm);
  \draw(7.25,-0.75) circle (0.15cm);
  \node[right] at (7.4, 2){$v$};
  \node[below] at (6.5,-0.15){$v_1$};
  \node[below] at (8, -0.15){$v_2$};
  \node[below] at (7.25,-0.9){$u$};
  \draw[->] (7.28, 1.88) -- (7.9, 0.15);
  \draw[->] (7.22, 1.88) -- (6.6, 0.15);
  \draw[->] (7.25,1.85) -- (7.25,-0.6);
  \node[right] at (7.7,0.75) {$\lambda$};
  \node at (6.55,0.7) {$\beta$};
  \node[left] at (7.25,0) {$\delta$};
\end{tikzpicture}
    \end{subfigure}\hspace{5mm}
    \begin{subfigure}[b]{0.45\textwidth}
        \begin{tikzpicture}[scale = 0.9]
        \node at (5, -1.5){(d)};
  \draw (2.75,0) circle (1.5cm);
  \draw[fill = black](2.75,0) circle (0.15cm);
  \node[below] at (2.75,-0.15){$v_0$};

  \draw[->, very thick] (4.5,0.25) -- (5.5,0.25);
  \draw[->, very thick] (5.5,-0.25) -- (4.5,-0.25);

  \draw[fill = black](7.25,0) circle (0.15cm);  
  \draw (7.25,0) circle (1.5cm);
  \draw (6.5,2) circle (0.15cm);
  \draw (8,2) circle (0.15cm);
  \node[below] at (7.25,-0.15){$v_0$};
  \draw[->] (6.53, 1.88) -- (7.15, 0.15);
  \draw[->] (7.97, 1.88) -- (7.35, 0.15);
  \node[right] at (7.55,0.75) {$\alpha$};
  \node[left] at (6.93,0.75) {$\beta$};
  \draw[->] (6.65,2) -- (7.85,2);
  \draw[->](7.9,2.1) .. controls (7.25,2.3) ..  (6.6,2.1);
  \node[left] at (6.4,2){$v_1$};
  \node[right] at (8.1,2){$v_2$};
  \node[above] at (7.25,2.3){$\gamma^{k/2}$};
  \node[below] at (7.25,2){$\textrm{id}$};
\end{tikzpicture}
    \end{subfigure}
        \caption{Examples of extensions. (a) is a 0-extension, where the gains $\alpha$ and $\beta$ are arbitrary. (b) is a loop-1-extension, where $\alpha\neq\textrm{id}$ and $\beta$ is a arbitrary. (c) is a 1-extension, where $\alpha=\beta\lambda^{-1}$ and $\delta$ is arbitrary. (d) is a 2-vertex-extension, where $\alpha$ and $\beta$ are arbitrary, and $\gamma$ is the generator of $\Gamma$ which corresponds to 1 in $\mathbb{Z}_k$. In (a,b,c), any one  of the vertices incident to $v$ may be the fixed vertex.}
        \label{extensions image}
\end{figure}

It was shown in \cite{SchulzeLP2024}, that these moves maintain symmetry-generic isostatic properties (see Lemmas 6.5,6.9, 6.12 and 6.14). In the following result, let $|\Gamma|=k$, and let $\gamma$ be the generator of $\Gamma$ which corresponds to $1\in\mathbb{Z}_k$ through an isomorphism.

\begin{lemma}
\label{lemma:rank ext.}
    Let $k\geq4,0\leq j\leq k-1$, and let $(G,\psi,p)$ be a $\rho_j-$symmetrically isostatic $\mathcal{C}_k$-gain framework. Let $(G',\psi')$ be obtained from $(G,\psi)$ by applying an extension. With the same notation as that in Definition~\ref{defn: 1-extension}, assume that if the extension applied is a 1-extension, then the following condition holds:
    \begin{itemize}
        \item [(C1)] $\tau(\psi(e_1))p(v_1),\tau(\psi(e_2))p(v_2)$ and $\tau(\psi(e_3))p(u)$ do not lie on the same line. 
    \end{itemize}
    Assume further that, if the extension applied to $(G,\psi)$ is a loop-1-extension, the following conditions hold:
    \begin{itemize}
        \item[(C2)] If $k$ is even and $j$ is odd, then the new loop does not have gain $\gamma^{k/2}$; and
        \item[(C3)] If the vertex $u$ incident to the new vertex $v$ is fixed, then $j\neq0$ and, whenever $1\leq j\leq k-1$, there is no $n\in S_0(k,j)$ such that $\left<g\right>\simeq\mathbb{Z}_n$, where $g$ is the gain assigned to the new loop.
    \end{itemize}
    Then there is a map $p':V(G')\rightarrow\mathbb{R}^2$ such that $(G',\psi',p')$ is a $\rho_j$-symmetrically isostatic $\mathcal{C}_k$-gain framework.
\end{lemma}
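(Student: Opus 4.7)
The plan is to treat each of the four extensions separately and, in each case, produce a configuration $p'$ of the new vertices for which the orbit rigidity matrix $O_j(G',\psi',p')$ has full row rank. Each extension adds the same number of new rows (one per new edge) as new columns (two per new free vertex) to $O_j(G,\psi,p)$, so by a block-triangular argument it suffices to exhibit a $p'$ for which the submatrix of $O_j(G',\psi',p')$ indexed by the new rows and the new columns is non-singular. Since non-singularity is a Zariski-open condition on $p'$, my task in each case reduces to verifying that the determinant of that submatrix is not identically zero as a polynomial in the new coordinates.

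I would then work through the four cases using the explicit formulas for $O_j$ given in Section~4 of \cite{SchulzeLP2024}. For a $0$-extension the relevant submatrix is $2\times 2$ and its determinant manifestly depends on $p'(v)$, vanishing only on a $1$-dimensional subvariety determined by $\tau_k(\psi'(e_1))p(v_1)$ and $\tau_k(\psi'(e_2))p(v_2)$. For a loop-$1$-extension the submatrix is again $2\times 2$: the row from the loop $f$ is non-zero in the $\rho_j$-block exactly when $I - \overline{\rho_j(\psi'(f))}\tau_k(\psi'(f))$ is invertible, which is the content of (C2), and the row from the new edge $e$ can be made independent of it by generic choice of $p'(v)$, subject to (C3) when $u$ is the fixed vertex. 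For a $1$-extension the argument follows the classical one from \cite{bt2015}: the row of the removed edge $e$ lies in the span of the three new rows if and only if the three image points $\tau_k(\psi'(e_1))p'(v_1),\tau_k(\psi'(e_2))p'(v_2),\tau_k(\psi'(e_3))p(u)$ are collinear, which (C1) excludes. Finally, for a $2$-vertex-extension (with $k$ even and $V_0(G)=\{v_0\}$), the $4\times 4$ submatrix decomposes into a $2\times 2$ block coming from $f_1,f_2$ whose determinant is a non-zero multiple of $\det(I-\overline{\rho_j(\gamma^{k/2})}\tau_k(\gamma^{k/2}))$ (using $\tau_k(\gamma^{k/2})=-I$), and a $2\times 2$ block coming from $e_1,e_2$ that is generically non-singular in $p'(v_1),p'(v_2)$; a direct computation shows the full $4\times 4$ determinant is not identically zero.

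The main obstacle I expect is the loop-$1$-extension: conditions (C2) and (C3) must be shown to rule out precisely the degeneracies where no choice of $p'(v)$ can make the two new rows independent. Condition (C2) addresses the parity issue that for even $k$ and odd $j$ the element $\gamma^{k/2}$ satisfies $\overline{\rho_j(\gamma^{k/2})}\tau_k(\gamma^{k/2})=I$, which would kill the loop-row; (C3) handles the case $u=v_0$ where $p(u)=0$ forces part of the edge-row to vanish, so one needs the loop-row to contribute a non-trivial constraint outside any subgroup in $S_0(k,j)$. The remaining work is essentially bookkeeping: the cases $j\in\{0,1,k-1\}$ are already covered by Lemmas~6.5, 6.9, 6.12 and 6.14 of \cite{SchulzeLP2024}, and the extensions to $2\le j\le k-2$ proceed by the same determinant computations, with (C1)--(C3) emerging precisely as the conditions under which each sub-determinant is a non-zero polynomial in the new coordinates.
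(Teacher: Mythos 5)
First, a contextual point: the paper does not actually prove this lemma --- it is imported wholesale from \cite{SchulzeLP2024} (Lemmas 6.5, 6.9, 6.12 and 6.14), so there is no in-paper argument to compare against. Your overall strategy (case analysis over the four extensions, block-triangularity of the orbit matrix, and reduction to the non-vanishing of the determinant of the submatrix indexed by the new rows and new columns) is the standard one used in that reference and in \cite{bt2015}, and conditions (C1)--(C3) do play, in broad outline, the roles you assign them; your diagnosis of why (C2) and (C3) are needed for the loop-$1$-extension is essentially right.

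Two of your case descriptions, however, are wrong as stated. For the $1$-extension you claim that the row of the removed edge $e$ lies in the span of the three new rows if and only if the three image points are collinear, ``which (C1) excludes''; this has the logic of the classical argument backwards. One \emph{wants} the row of $e$ to be recoverable from the new rows modulo the new columns --- this is arranged by placing $p'(v)$ on the line through $\tau(\psi'(e_1))p(v_1)$ and $\tau(\psi'(e_2))p(v_2)$ --- and the role of (C1) is to guarantee that the row of $e_3$ then contributes a second independent direction in the new columns, so the rank increases by exactly $2$. As written, your condition would prevent the old row space from being recovered and the argument would not close. For the $2$-vertex-extension, your claimed decomposition produces a factor $\det\bigl(I-\overline{\rho_j(\gamma^{k/2})}\tau_k(\gamma^{k/2})\bigr)$, which is zero precisely when $k$ is even and $j$ is odd --- that is, exactly the case ($k=6$, $j=3$) for which this move is introduced and used in the proof of Theorem~\ref{theorem: final theorem for k-fold symmetry pt.2}. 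So the $4\times4$ matrix cannot decompose as you describe (your own final sentence, that the full determinant is not identically zero, contradicts the factorisation); the rows of $e_1,e_2$ interact with those of $f_1,f_2$ in an essential way because $p(v_0)=0$, and the determinant has to be computed directly rather than factored.
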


\section{Blockers of a reduction}\label{sec:blockers}

Let $k\geq4,2\leq j\leq k-2$, and let $(G,\psi)$ be a $\mathbb{Z}_k^j$-gain tight $\Gamma$-gain graph. We say a reduction of $(G,\psi)$ is \textit{admissible} if the $\Gamma$-gain graph $(G',\psi')$ which it yields is also $\mathbb{Z}_k^j$-gain tight. It is straightforward to see that  0-reductions, loop-1-reductions and 2-vertex-reductions are always admissible. However, when we apply a 1-reduction to $(G,p)$, we add an edge which may break the sparsity count. If this is the case, we say the graph $(G',\psi')$ obtained from $(G,\psi)$ by applying the 1-reduction has a \textit{blocker}.

\begin{definition}
\label{blocker defn for k-fold rotation}
    Let $k:=|\Gamma|\geq4$, $2\leq j\leq k-2$ and $(G,\psi)$ be a $\mathbb{Z}_k^j$-gain tight $\Gamma$-gain graph. Assume $G$ has a free vertex $v$ of degree 3, with no loop. Let $(G',\psi')$ be a $\Gamma$-gain graph obtained from $(G,\psi)$ by applying a 1-reduction at $v$, and let $e=(v_1,v_2)$ be the edge we add when we apply such reduction. 
    We say a subgraph $H$ of $G-v$ with $v_1,v_2\in V(H)$ and $E(H)\neq\emptyset$ is a \textit{blocker} of $e$ (equivalently, of $(G',\psi')$) if $H+e$ is connected and $|E(H)|=2|V(H)|-3+\alpha_k^j(H+e)$, where $\alpha_k^j$ is as in Definition~\ref{definition of sparsity, higher order rotation}. 
    If $\alpha_k^j(H+e)=3-2|V_0(H)|$, we say $H$ is a \textit{general-count blocker}. If $H+e$ is balanced, we say $H$ is a \textit{balanced blocker}.
\end{definition}

\begin{remark}
    A blocker is defined such that, when joined with the edge added through the 1-reduction, it is connected. However, disconnected graphs may also lead to a break of the sparsity count, when applying a 1-reduction.
    With the same notation as that in Definition~\ref{blocker defn for k-fold rotation}, let $H'$ be a disconnected $\mathbb{Z}_k^j$-gain tight subgraph of $G-v$ with no isolated vertices, such that $v_1,v_2\in V(H')$ and $E(H')\neq\emptyset$. Let $H_1,\dots,H_c$ be the connected components of $H'$. Then, an easy combinatorial argument shows that $H$ is $(2,0,0)$-tight (see, e.g. Lemma 4.13 in \cite{Ikeshita}) and each connected component of $H'$ is also $(2,0,0)$-tight (If, say $|E(H_1)|\leq2|\overline{V(H_1)}|-1$, then some other connected component $H_i$ must satisfy $|E(H_i)|\geq2|\overline{V(H_i)}|+1$, contradicting the sparsity of $(G,\psi)$). For some (not necessarily distinct) $1\leq s,t\leq c$, we have $v_1\in V(H_s),v_2\in V(H_t)$. Then, $H_s\cup H_t$ is a blocker, as given in Definition \ref{blocker defn for k-fold rotation}.
\end{remark}

In this section, we examine the union of two blockers (mostly, we consider blocker whose intersection has non-empty edge set. However Lemma~\ref{lemma:H+v=H+f_1+f_2} allows the intersection of two blockers to have empty edge set). To do so, we need the following results. Lemma~\ref{lemma: there is no tight subgraph containing all three neighbours} was shown in \cite{SchulzeLP2024} (see Proposition 7.2). Here, we give the proof of Lemmas~\ref{there is no fixed cut vertex} and~\ref{lemma:H+v=H+f_1+f_2}.

\begin{proposition}
\label{lemma: there is no tight subgraph containing all three neighbours}
       Let $0\leq m\leq 2,0\leq l\leq3$, let $(G,\psi)$ be a $\Gamma$-gain graph with a free vertex $v$ of degree 3 which has no loop (the neighbours of $v$ need not be distinct). If $(G,\psi)$ is $(2,m,l)$-sparse, then there is no $(2,m,l)$-tight subgraph of $G-v$ which contains all neighbours of $v$.
\end{proposition}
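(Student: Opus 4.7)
The plan is a short proof by contradiction based on a direct edge count. Suppose for contradiction that there is a $(2,m,l)$-tight subgraph $H$ of $G-v$ with $v_1,v_2,v_3\in V(H)$, where $v_1,v_2,v_3$ are the (not necessarily distinct) neighbours of $v$. I would then form the subgraph $H':=H+v$ of $G$, meaning the subgraph obtained by adjoining $v$ as a vertex together with the three edges of $G$ incident to $v$. These three edges are well-defined and distinct because $v$ has degree $3$ and carries no loop; moreover each has its other endpoint inside $V(H)$ by the hypothesis on $H$, regardless of whether the neighbours coincide (parallel edges are permitted by Definition~\ref{def:gaingraph}(1)).

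The key step is the count. Since $v$ is a free vertex, $|\overline{V(H')}|=|\overline{V(H)}|+1$ and $|V_0(H')|=|V_0(H)|$, while $|E(H')|=|E(H)|+3$. Using $(2,m,l)$-tightness of $H$,
\begin{equation*}
|E(H')| \;=\; \bigl(2|\overline{V(H)}|+m|V_0(H)|-l\bigr)+3 \;=\; 2|\overline{V(H')}|+m|V_0(H')|-l+1,
\end{equation*}
so $H'$ strictly violates the $(2,m,l)$-sparsity bound. Since $H'$ is a subgraph of $G$ with $E(H')\neq\emptyset$, this contradicts the $(2,m,l)$-sparsity of $(G,\psi)$, completing the proof.

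There is essentially no obstacle; the only minor point to check is that the argument is unaffected by the neighbours of $v$ being repeated (e.g.\ two or three parallel edges from $v$ to a common vertex $u\in V(H)$). In every such case the three edges are still genuinely distinct edges of $G$ landing in $V(H)$, so $|E(H')|=|E(H)|+3$ remains correct, and the contradiction goes through verbatim.
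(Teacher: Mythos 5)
Your proof is correct and is the standard counting argument: the paper itself does not reprove this statement but cites Proposition 7.2 of \cite{SchulzeLP2024}, and the argument there is exactly this computation (adjoin the free vertex $v$ and its three non-loop edges to the putative tight subgraph $H$ and observe that the resulting subgraph exceeds the $(2,m,l)$-sparsity bound by one). Your handling of coincident neighbours is also the right observation, since parallel edges from $v$ still contribute three distinct edges to $E(H+v)$.
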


\begin{lemma}
\label{there is no fixed cut vertex}
    Let $0\leq m\leq2,1\leq l\leq3$ be such that $m\leq l$, and let $(G,\psi)$ be a $(2,m,l)$-tight $\Gamma$-gain graph. Then $G$ has no fixed cut-vertex.
\end{lemma}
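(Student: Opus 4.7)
The plan is to argue by contradiction: suppose $v_0 \in V_0(G)$ is a fixed cut-vertex of $G$, and decompose $G$ along $v_0$ into the \emph{branches} $G_1,\dots,G_c$ with $c\geq 2$, where $G_i$ consists of the $i$-th connected component of $G-v_0$ together with $v_0$ and all edges of $G$ joining $v_0$ to that component. Each edge of $G$ sits in exactly one $G_i$ (edges incident to $v_0$ are assigned to the branch containing the other endpoint), so $E(G)=\bigsqcup_i E(G_i)$; the free-vertex sets satisfy $\overline{V(G)}=\bigsqcup_i\overline{V(G_i)}$; and each $G_i$ has $V_0(G_i)=\{v_0\}$ and $E(G_i)\neq\emptyset$ (since $v_0$ has at least one neighbour in each component).

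Next, I would invoke $(2,m,l)$-sparsity of $(G,\psi)$ on each $G_i$ to get
\[
|E(G_i)|\ \leq\ 2|\overline{V(G_i)}|+m|V_0(G_i)|-l\ =\ 2|\overline{V(G_i)}|+m-l.
\]
Summing over $i=1,\dots,c$ and using the partitions above gives
\[
|E(G)|\ =\ \sum_{i=1}^{c}|E(G_i)|\ \leq\ 2|\overline{V(G)}|+c(m-l).
\]
On the other hand, $(2,m,l)$-tightness of $(G,\psi)$ (with $V_0(G)=\{v_0\}$, which is forced once $v_0$ is a cut-vertex) reads $|E(G)|=2|\overline{V(G)}|+m-l$. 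Combining the two relations yields $(c-1)(l-m)\leq 0$, which contradicts $c\geq 2$ together with the hypothesis $l>m$ in the cases of interest $(m,l)\in\{(0,1),(1,2),(2,3)\}$.

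The whole argument is a direct edge-counting computation; there is no real obstacle. The only delicate point to get right is the bookkeeping of edges and fixed vertices across the branches: one must verify that the $E(G_i)$ genuinely partition $E(G)$ (so no edge is double-counted, which is why one assigns each $v_0$-incident edge to a unique branch), and that the fixed vertex $v_0$ is counted in \emph{each} branch (giving rise to the $c(m-l)$ versus $(m-l)$ mismatch that powers the contradiction). No appeal to balancedness, near-balancedness, or the $S(k,j)$ structure is needed — the lemma holds at the purely numerical level of the sparsity count, which is precisely why it can be cited later as a structural prerequisite when analysing blockers.
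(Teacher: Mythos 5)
Your proof is correct and essentially identical to the paper's: both decompose $G$ at the fixed cut-vertex into branches meeting pairwise only in $v_0$, apply the $(2,m,l)$-sparsity count to each branch, sum over branches, and compare with the tightness of $G$ to force the number of branches to be $1$. One remark: the final contradiction genuinely requires $l>m$ (as you note), whereas the lemma as stated only assumes $m\le l$; the paper's own proof has the same implicit restriction (its step ``$m-l\le t(m-l)$ and $m-l\le 0$ imply $t\le 1$'' fails when $m=l$), and the lemma is only ever invoked with $(m,l)=(2,3)$, so this is a shared, harmless caveat rather than a gap in your argument.
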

\begin{proof}
    By Lemma 4.13 in \cite{Ikeshita}, $G$ is connected. Assume, by contradiction, that $v_0\in V_0(G)$ is a cut-vertex of $G$. Let $\{G_1,\dots,G_t\}$ be a partition of $G$ such that $G_i\cap G_j$ is $v_0$ for all $1\leq i\neq j\leq t$, and notice that $E(G_i)\neq\emptyset$ for all $1\leq i\leq t$. It follows that $|E(G_i)|\leq2|\overline{V(G_i)}|+m|V_0(G_i)|-l=2|\overline{V(G_i)}|+m-l$ for all $1\leq i\leq t$. Hence,
    \begin{equation*}
        2|\overline{V(G)}|+m-l=2|\overline{V(G)}|+m|V_0(G)|-l=|E(G)|=\sum_{i=1}^t|E(G_i)|\leq2\sum_{i=1}^t|\overline{V(G_i)}|+mt-lt=2|\overline{V(G)}|+t(m-l).
    \end{equation*}
    It follows that $m-l\leq t(m-l)$. Since $m-l\leq0$, this implies that $t\leq1$. But this contradicts the fact that $v_0$ is a cut-vertex. Hence, the result holds.
\end{proof}

\begin{lemma}
    \label{lemma:H+v=H+f_1+f_2}
    Let $|\Gamma|=k\geq4,2\leq j\leq k-2$, and $(G,\psi)$ be a $\mathbb{Z}_k^j$-gain tight $\Gamma$-gain graph. Assume $G$ has a free vertex $v$ of degree 3, without a loop.  
    Let $(G_1,\psi_1),(G_2,\psi_2)$ be obtained from $(G,\psi)$ by applying two different 1-reductions, which add the edges $f_1,f_2$, respectively. Let $H_1,H_2$ be blockers for $(G_1,\psi_1),(G_2,\psi_2)$, respectively, and use $H$ to denote $H_1\cup H_2$. If $N(v)=3$, assume that $f_1$ and $f_2$ do not share a fixed vertex. Then, $\left<H+v\right>\simeq\left<H+f_1+f_2\right>$.
\end{lemma}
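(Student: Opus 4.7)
The plan is to show $\left<H + v\right> = \left<H + f_1 + f_2\right>$ as subgroups of $\Gamma$ (which, for cyclic $\Gamma$, is equivalent to the isomorphism in the statement) via a gain-preserving walk correspondence. Label the three edges of $G$ incident to $v$ as $e_1, e_2, e_3$, with $e_i = (v, v_i)$, where the $v_i$ are (possibly coinciding) neighbors of $v$. After choosing orientations, the 1-reduction producing $f_1$ gives $\psi_1(f_1) = \psi(e_1)^{-1}\psi(e_2)$, and (relabeling $e_2, e_3$ if necessary) the one producing $f_2$ gives $\psi_2(f_2) = \psi(e_1)^{-1}\psi(e_3)$. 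A direct computation yields $\psi_1(f_1)^{-1}\psi_2(f_2) = \psi(e_2)^{-1}\psi(e_3)$, so the gains of the three two-edge paths $v_1 \to v \to v_2$, $v_1 \to v \to v_3$, and $v_2 \to v \to v_3$ through $v$ are respectively $\psi_1(f_1)$, $\psi_2(f_2)$, and $\psi_1(f_1)^{-1}\psi_2(f_2)$.

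For the forward inclusion $\left<H + f_1 + f_2\right> \subseteq \left<H + v\right>$, I would take any closed walk $W$ in $H + f_1 + f_2$ avoiding the fixed vertex, and replace each occurrence of $f_1$ (resp.\ $f_2$) by the path $v_1 \to v \to v_2$ (resp.\ $v_1 \to v \to v_3$) in $G$. The resulting walk $W'$ in $H + v$ satisfies $\psi(W') = \psi(W)$, and, since $v$ is free, $W'$ still avoids the fixed vertex.

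For the reverse inclusion, I would take a closed walk $W$ in $H + v$ avoiding the fixed vertex and replace each visit to $v$, which enters and exits $v$ via a pair of incident edges, by $f_1^{\pm 1}$ if the pair is $\{e_1, e_2\}$, by $f_2^{\pm 1}$ if the pair is $\{e_1, e_3\}$, and by the concatenation $f_1^{-1} f_2$ (routed through $v_1$) if the pair is $\{e_2, e_3\}$. The gain identities above ensure each replacement preserves gain. The third replacement is the delicate one: it routes the walk through the shared endpoint $v_1$ of $f_1$ and $f_2$, and the hypothesis that $f_1, f_2$ do not share a fixed vertex when $N(v) = 3$ guarantees $v_1$ is free in that case, so the new walk $W'$ in $H + f_1 + f_2$ still avoids the fixed vertex.

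The main obstacle is verifying that the third replacement never routes a walk through a fixed vertex; this is exactly the role of the hypothesis. A secondary complication is a case analysis when $N(v) = 2$, where two of $e_1, e_2, e_3$ are parallel (so two of $v_1, v_2, v_3$ coincide) and one of $f_1, f_2$ may be a loop at a free vertex; in each such subcase the same gain identity and replacement scheme apply, provided the orientations of $f_1, f_2$ are chosen consistently.
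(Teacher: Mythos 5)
Your two-way walk-replacement argument is essentially the paper's proof in different packaging: the paper computes $\left<H+v\right>$ and $\left<H+f_1+f_2\right>$ explicitly, case by case for $N(v)=1,2,3$, as the group generated by $\left<H\right>$ together with the gains of walks in $H$ between the neighbours of $v$, and the identities $\psi_1(f_1)=\psi(e_1)^{-1}\psi(e_2)$, $\psi_2(f_2)=\psi(e_1)^{-1}\psi(e_3)$, $\psi_1(f_1)^{-1}\psi_2(f_2)=\psi(e_2)^{-1}\psi(e_3)$ you use are exactly what make those computations agree.

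There is, however, one subcase where your assertion that for $N(v)=2$ ``the same gain identity and replacement scheme apply'' is too quick: the fixed-vertex obstruction you correctly isolated for $N(v)=3$ reappears there. Write the two neighbours as $a$ (joined to $v$ by the parallel edges $e,e'$, hence necessarily free) and $b$ (joined by $e''$). If the two reductions are the ones merging $\{e,e''\}$ and $\{e',e''\}$, then $f_1,f_2$ are both edges between $a$ and $b$, and your third replacement --- applied to a visit to $v$ through the pair $\{e,e'\}$, which carries the nontrivial gain $g$ --- routes the walk through $b$. If $b$ is the fixed vertex, that walk is excluded from the generating set of $\left<H+f_1+f_2\right>$, and in fact the conclusion can genuinely fail in this configuration: $\left<H+f_1+f_2\right>$ would equal $\left<H\right>$ while $\left<H+v\right>$ also contains $g$. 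The way out is to observe that this configuration cannot come from two \emph{different} $1$-reductions: two parallel edges at the fixed vertex differ only in their (arbitrary) gain label and correspond to the same edge orbit in the covering, so when $b$ is fixed one of $f_1,f_2$ must be the loop at $a$, whose base vertex is free. The paper deals with this by splitting its Case~2 according to whether $v_2$ is fixed; you should make the corresponding exclusion explicit. With that point added, your argument is sound.
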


\begin{proof}
    Since $\left<H\right>$ is a subgroup of a cyclic group, we know that there is some integer $n\leq k$ such that $\left<H\right>=\left<h\right>\simeq\mathbb{Z}_n$ through an isomorphism which maps $h$ to 1. We look at the cases where $N(v)$ is 1,2,3, separately. 

    \medskip
    \textit{\textbf{Case 1:} $N(v)=1$.}

    \medskip
    Let $u$ be the neighbour of $v$, let $e_1,e_2,e_2$ be the edges incident to $u$ and $v$, and let $\psi(e_i)=g_i$ for $1\leq i\leq3$. By Lemma~\ref{lemma: forests can have identity gains}, we may assume that $g_1=\textrm{id}$. Moreover, by the definition of gain graph, we know that $g_2,g_3,g_2g_3^{-1}\neq\textrm{id}$. 

    By the definition of 1-reduction and the fact that $(G_1,\psi_1),(G_2,\psi_2)$ are obtained by applying two different 1-reductions, we may assume without loss of generality that $\psi_1(f_1)\neq\psi_2(f_2)$ lie in $\{g_2,g_3,g_2g_3^{-1}\}$. Since $(g_2g_3^{-1})g_3=g_2$ and $(g_2g_3^{-1})^{-1}g_2=g_3$, it follows that $\left<\left\{f_1,f_2\right\}\right>=\left<g_2,g_3\right>$. Similarly, $\left<\left\{e_1,e_2,e_3\right\}\right>=\left<g_2,g_3\right>$. Then, $\left<H+v\right>=\left<H+f_1+f_2\right>=\left<h,g_2,g_3\right>$.

    \medskip
    \textit{\textbf{Case 2:} $N(v)=2$.}

    \medskip
    Let $v_1,v_2$ be the neighbours of $v$, let $e_1,e_1'$ be the edges incident to $v$ and $v_1$, and let $e_2$ be the edge incident to $v$ and $v_2$. By Lemma~\ref{lemma: forests can have identity gains}, we may assume that $\psi(e_1)=\psi(e_2)=\textrm{id}$, and by the definition of gain graph, we know $g:=\psi(e_1')\neq\text{id}$. 

    By the definition of 1-reduction and the fact that $(G_1,\psi_1),(G_2,\psi_2)$ are obtained by applying two different 1-reductions, we know that at most one of $\psi_1(f_1),\psi_2(f_2)$ is $\textrm{id}$, and we may assume without loss of generality that $\psi_i(f_i)\in\{\textrm{id},g\}$ for $1\leq i\leq2$. If $v_2$ is fixed, it follows that $\left<H+v\right>=\left<H+f_1+f_2
    \right>=\left<h,g\right>$. So, assume that $v_2$ is free. 
    
    Let $\mathcal{W}$ be the set of walks from $v_1$ to $v_2$ in $H$ with no fixed vertex and notice that, for all $W\in\mathcal{W}$, $g^{-1}(g\psi(W))=\psi(W)$. Then, $\left<H+v\right>=\left<h,g,\psi(W),g\psi(W):W\in\mathcal{W}\right>=\left<h,g,\psi(W):W\in\mathcal{W}\right>$. Similarly, $\left<H+f_1+f_2\right>=\left<h,g,\psi(W):W\in\mathcal{W}\right>=\left<H+v\right>$.
    
    \medskip
    \textit{\textbf{Case 3:} $N(v)=3$.}

    \medskip
    Let $v_1,v_2,v_3$ be the neighbours of $v$ and, for $1\leq i\leq3$, let $e_i=(v,v_i)$. By Lemma~\ref{lemma: forests can have identity gains}, we may assume that $\psi(e_i)=\textrm{id}$ for $1\leq i\leq3$. Then, by the definition of 1-reduction, $\psi_1(f_1)=\psi_2(f_2)=\textrm{id}$. We may assume, without loss of generality, that $f_1=(v_1,v_2)$ and that $f_2=(v_2,v_3)$. By assumption, $v_2$ is free. For $1\leq s\neq t\leq3$, let $\mathcal{W}_{s,t}$ denote the set of walks from $v_s$ to $v_t$ in $H$ which do not contain a fixed vertex. If $v_1,v_2$ are free, then $\left<H+f_1+f_2\right>,\left<H+v\right>$ are both $\left<h,\psi(W_{12}),\psi(W_{23}),\psi(W_{13}):W_{12}\in\mathcal{W}_{1,2},W_{23}\in\mathcal{W}_{2,3},W_{13}\in\mathcal{W}_{1,3}\right>$. So, we may assume that one of $v_1,v_3$ is fixed. Assume, without loss of generality, that $v_1$ is fixed. Then, $\left<H+f_1+f_2\right>=\left<H+v\right>=\left<h,g,\psi(W):W\in\mathcal{W}_{2,3}\right>$. The result follows.    
\end{proof}

Let $(G,\psi)$ be $\mathbb{Z}_k^j$-gain tight $\Gamma$-gain graph with a free vertex $v$ of degree 3. We aim to show that, expect for a special case which will be dealt with in Section~\ref{sec:final}, there is always an admissible 1-reduction at $v$ (see Theorem~\ref{theorem on 1-reductions}). We will do so using a contradiction argument. It is easy to see that there are at least two possible 1-reductions at $v$ (it can be seen, for instance, in the proof of Lemma~\ref{lemma:H+v=H+f_1+f_2}). Let $(G_1,\psi_1),(G_2,\psi_2)$ be obtained from $(G,\psi)$ by applying two different 1-reductions at $v$, which add the edges $f_1,f_2$, respectively. Suppose that neither one of the 1-reductions is admissible, so that $(G_1,\psi_1),(G_2,\psi_2)$ have some blockers $H_1,H_2$, respectively. 
We start by considering the case where $E(H_1\cap H_2)\neq\emptyset$. For the remaining part of this section, we aim to show that $|E(H_1\cup H_2)|=2|V(H_1\cup H_2)|-3+\alpha_k^j(H_1\cup H_2+f_1+f_2)$ whenever $E(H_1\cap H_2)\neq\emptyset$. 
Then, if $E(H_1\cap H_2)\neq\emptyset$,
we need only consider the case where $H_1\cup H_2+f_1+f_2$ is proper near-balanced and $H_1\cup H_2$ is $(2,1)$-tight, and the case where $N(v)=3$ and $f_1,f_2$ share a fixed vertex (see Corollary~\ref{cor:H+f_1+f_2 bal. or near-bal.}). We will see in Section~\ref{sec:reduct}, that both of these cases also lead to a contradiction. So, we will be able to assume that, given two blockers for two distinct 1-reductions, their intersection has empty edge set.

Since $0\leq\alpha_j^k(H_1+f_1),\alpha_j^k(H_1+f_1)\leq3$, this proof requires to consider  10 different cases and, as a result, it is lengthy. If we restrict the values of $\alpha_j^k(H_1+f_1),\alpha_j^k(H_1+f_1)$ to lie between 1 and 2, we then only have to consider 3 cases. In Section~\ref{sec:alpha<3}, we show that $H_1,H_2$ cannot be general-count blockers, so that $\alpha_j^k(H_1+f_1),\alpha_j^k(H_1+f_1)\leq2$. In Section~\ref{sec:alpha>0}, we show that the desired result holds whenever $\alpha_j^k(H_i+f_i)=0$ for some $1\leq i\leq2$. In Section~\ref{sec:un.block.}, we then prove the full result.

\subsection{The union of two blockers, one of which is a general-count blocker}
\label{sec:alpha<3}
\begin{lemma}
    \label{lemma:no GCB}
    Let $\Gamma$ be a cyclic group of order $k\geq4$. For $2\leq j\leq k-2$, let $(G,\psi)$ be a $\mathbb{Z}_k^j$-gain tight $\Gamma$-gain graph with a free vertex $v$ of degree 3 which has no loop. Let $(G_1,\psi_1),(G_2,\psi_2)$ be obtained from $(G,\psi)$ by applying two different 1-reductions at $v$, which add the edges $f_1$ and $f_2$, respectively. For $i=1,2$, assume that $(G_i,\psi_i)$ has a blocker $H_i$. If $E(H_1\cap H_2)\neq\emptyset$, then $H_1,H_2$ are not general-count blockers.
\end{lemma}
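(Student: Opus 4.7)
Assume for contradiction that $H_1$ is a general-count blocker; by the symmetry of the hypotheses this subsumes the case that $H_2$ is general-count as well. The plan is to add $v$ together with its three incident edges back to $H := H_1 \cup H_2$, obtaining a connected subgraph $H+v$ of $G$, and extract a contradiction from the $\mathbb{Z}_k^j$-gain sparsity of $H+v$.

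First I would observe that since $f_1 \neq f_2$ come from two different $1$-reductions at the degree-$3$ vertex $v$, the endpoints of $f_1$ and $f_2$ together exhaust the whole neighborhood $N(v)$; hence $V(H) \supseteq N(v)$, $|V(H+v)| = |V(H)|+1$ and $|E(H+v)| = |E(H)|+3$. The hypothesis $E(H_1 \cap H_2) \neq \emptyset$ forces $H_1 \cup H_2$ (and therefore $H+v$) to be connected, so the sparsity bound $|E(H+v)| \leq 2|V(H+v)| - 3 + \alpha_k^j(H+v)$ applies. Combining this with the same bound applied to $H_1 \cap H_2$, substituting the blocker identities $|E(H_i)| = 2|V(H_i)| - 3 + \alpha_k^j(H_i+f_i)$, and using inclusion-exclusion on $|V|$ and $|E|$, yields the key inequality
\begin{equation*}
\alpha_k^j(H_1 \cap H_2) + \alpha_k^j(H+v) \;\geq\; \alpha_k^j(H_1+f_1) + \alpha_k^j(H_2+f_2) + 1.
\end{equation*}
Combined with the universal bound $\alpha_k^j(X) \leq 3 - 2|V_0(X)|$ and the identity $|V_0(H_1)|+|V_0(H_2)| = |V_0(H_1 \cap H_2)| + |V_0(H+v)|$ (which holds because $(G,\psi)$ contains at most one fixed vertex), this gives $\alpha_k^j(H_1+f_1)+\alpha_k^j(H_2+f_2) \leq 5-2(|V_0(H_1)|+|V_0(H_2)|)$. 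If both $H_1$ and $H_2$ were general-count, the left-hand side would equal $6-2(|V_0(H_1)|+|V_0(H_2)|)$, and the resulting inequality $6\leq 5$ is an immediate contradiction.

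To exclude the remaining case in which only $H_1$ is general-count, I would case-split on the value of $\alpha_k^j(H_2+f_2) \in \{0,\,1,\,2-2|V_0(H_2)|\}$, exploiting the subgroup inheritance $\langle H_1 \cap H_2 \rangle \leq \langle H_2 + f_2 \rangle$ to sharpen the bound on $\alpha_k^j(H_1 \cap H_2)$. When $H_2+f_2$ is balanced, so is $H_1\cap H_2$, forcing $\alpha_k^j(H_1\cap H_2)=0$; when $\langle H_2+f_2\rangle\simeq\mathbb{Z}_2$ (so $j$ is odd), every subgroup is either trivial or $\mathbb{Z}_2$, giving $\alpha_k^j(H_1\cap H_2)\leq 1$; when $H_2+f_2\in S_0(k,j)$ or $S_{\pm 1}(k,j)$, every divisor of $|\langle H_2+f_2\rangle|$ (modulo the $d=2$ exception for odd $j$) lies in the same class, giving $\alpha_k^j(H_1\cap H_2)\leq 2-2|V_0(H_1\cap H_2)|$. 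Plugging each refined bound back into the key inequality, together with the fact that $\langle H+v\rangle$ contains the general-count group $\langle H_1+f_1\rangle$ and hence, via Propositions~\ref{union of [...] and [...] is [...]} and~\ref{gcd proposition}, must itself lie outside $S_0(k,j)\cup S_{\pm 1}(k,j)$ and be unequal to $\mathbb{Z}_2$, would force $\alpha_k^j(H+v)$ to exceed its universal upper bound of $3-2|V_0(H+v)|$, yielding the desired contradiction.

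The main obstacle will be the proper near-balanced subcase: near-balancedness of $H_2+f_2$ does not automatically descend to $H_1\cap H_2$, so I would invoke Proposition~\ref{union of [...] and [...] is [...]}(iv), which implies that when $H_1\cap H_2$ is connected and unbalanced, $\langle H_1\cap H_2\rangle\simeq\langle H_2+f_2\rangle$, giving the required bound $\alpha_k^j(H_1\cap H_2)\leq 2-2|V_0(H_1\cap H_2)|$. An additional technical complication is that $H_1\cap H_2$ can be disconnected even when $E(H_1\cap H_2)\neq\emptyset$, in which case the sparsity bound on $H_1\cap H_2$ must be applied component-wise using the summed definition of $f_k^j$ over $C(E(H_1\cap H_2))$; handling this cleanly is where the bookkeeping is most delicate.
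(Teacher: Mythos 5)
Your strategy is viable and genuinely different from the paper's. The paper never forms $H+v$ or estimates $\alpha_k^j(H+v)$: it assumes (without loss of generality) that $H_1$ is general-count, writes $|E(H_1)|=2|\overline{V(H_1)}|$, performs a component-wise count on $E(H_1\cap H_2)$ using only the crude monotonicity $\alpha_k^j(H_i')\le\alpha_k^j(H_2+f_2)+2$ (resp. $\le\alpha_k^j(H_2+f_2)$ when the fixed-vertex counts agree), and concludes $|E(H_1\cup H_2)|\ge 2|\overline{V(H_1\cup H_2)}|$, contradicting the $(2,0,0)$-count via Proposition~\ref{lemma: there is no tight subgraph containing all three neighbours}. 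Your route re-attaches $v$ and plays the full $\mathbb{Z}_k^j$-sparsity of $H+v$ against the blocker identities; for connected $H_1\cap H_2$ your key inequality $\alpha_k^j(H_1\cap H_2)+\alpha_k^j(H+v)\ge\alpha_k^j(H_1+f_1)+\alpha_k^j(H_2+f_2)+1$ is correct, it kills the both-general-count case in one line, and (as one can check case by case) the universal bound $\alpha_k^j(X)\le 3-2|V_0(X)|$ applied to $H+v$ already suffices in the remaining cases, so your final appeal to the structure of $\langle H+v\rangle$ is not even needed. What each approach buys: the paper's is shorter per case because it only ever needs the $(2,0,0)$-count; yours extracts a sharper inequality but must then classify $\alpha_k^j(H_1\cap H_2)$ by subgroup inheritance.

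Two concrete problems remain. First, the disconnected case is not done: you acknowledge that $H_1\cap H_2$ may be disconnected and that $f_k^j$ must then be summed over components, but this is precisely where the paper's proof spends almost all of its effort (tracking the number $c_0$ of isolated components, whether the fixed vertex is isolated in the intersection, and bounding each $\alpha_k^j(H_i')$ separately). Your key inequality acquires the extra term $2c_0+\sum_{i>c_0}\bigl(3-\alpha_k^j(H_i')\bigr)-3$ on the right-hand side, and showing this is $\ge -\alpha_k^j(H_1\cap H_2)$-worth of slack requires exactly the per-component inheritance bounds; deferring this as ``delicate bookkeeping'' leaves the bulk of the proof unwritten. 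Second, your appeal to Proposition~\ref{union of [...] and [...] is [...]}(iv) is a misapplication: that statement requires a second graph with prime cyclic gain group and does not assert that a connected unbalanced subgraph of a proper near-balanced graph has the \emph{same} group --- indeed that claim is false in general (the subgraph's group can be a proper subgroup). What you actually need is only $\alpha_k^j(H_1\cap H_2)\le 2$, which does hold: by the characterisation of near-balancedness via equivalent gain functions (Lemma 4.1 in \cite{Ikeshita}), a connected unbalanced subgraph of a near-balanced graph is itself near-balanced with the same base vertex and gain, and the cases $\langle H_1\cap H_2\rangle\simeq\mathbb{Z}_2,\mathbb{Z}_3$ land in the $\mathbb{Z}_2$-with-$j$-odd, $S_0(k,j)$ or $S_{\pm1}(k,j)$ classes. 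So the bound is salvageable, but not by the citation you give.
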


\begin{proof}
    Let $H:=H_1\cup H_2,H':=H_1\cap H_2$, and let $H'_1,\dots, H'_c$ denote the connected components of $H'$. Notice that some of the connected components of $H'$ may be isolated vertices. So, for some integer $0\leq c_0\leq c$, let $H'_1,\dots,H_{c_0}'$ be the isolated vertices of $H'$, and $H_{c_0+1}',\dots,H_c'$ be the connected components of $H'$ with non-empty edge set.
    
    Assume, by contradiction, that $E(H')\neq\emptyset$ and that $H_i$ is a general-count blocker, for some $1\leq i\leq2$. Assume, without loss of generality, that $H_1$ is a general count blocker. We use the abbreviation $\alpha$ to denote $\alpha_k^j(H_2+f_2)$ and, for each $c_0+1\leq i\leq c$, we use $\alpha_i$ to denote $\alpha_k^j(H_i')$. Since $E(H')\neq\emptyset$, we know that $c_0\leq c-1$. By the sparsity of $(G,\psi)$, we have 
    \begin{equation*}
        |E(H')|\leq\sum_{i=1}^{c_0}[2|V(H_i')|-2]+\sum_{i=c_0+1}^c[2|V(H_i')|-3+\alpha_i]=2|V(H')|-(2c_0+3(c-c_0))+\sum_{i=c_0+1}^c\alpha_i.
    \end{equation*}
    Hence,
    \begin{equation*}
        \begin{split}
            |E(H)|&=|E(H_1)|+|E(H_2)|-|E(H')|\\
            &\geq2|\overline{V(H_1)}|+(2|V(H_2)|-3+\alpha)-(2|V(H')|-(2c_0+3(c-c_0))+\sum_{i=c_0+1}^c\alpha_i)\\
            &=2|\overline{V(H_1)}|+(2|\overline{V(H_2)}|+2|V_0(H_2)|-3+\alpha)-(2|\overline{V(H')}|+2|V_0(H')|-(2c_0+3(c-c_0))+\sum_{i=c_0+1}^c\alpha_i)\\
            &=2|\overline{V(H)}|+2(|V_0(H_2)|-|V_0(H')|)+2c_0+3(c-c_0-1)+(\alpha-\sum_{i=c_0+1}^c\alpha_i).
        \end{split}
    \end{equation*}
    Let $f=2(|V_0(H_2)|-|V_0(H')|)+2c_0+3(c-c_0-1)+(\alpha-\sum_{i=c_0+1}^c\alpha_i)$. If we show that $f\geq0$, then $|E(H)|\geq2|\overline{V(H)}|$, and so, by Proposition~\ref{lemma: there is no tight subgraph containing all three neighbours}, the result holds by contradiction. We show that indeed $f\geq0$. 

    To do so, we first note that, for each $c_0+1\leq i\leq c$, $H_i'$ is a subgraph of $H_2+f_2$, and so $\alpha_i\leq\alpha$ whenever $V_0(H_i')=V_0(H_2)$. If $V_0(H')=V_0(H_2)=\emptyset$, it follows that 
    \begin{equation*}
    \begin{split}
        f\geq2c_0+3(c-c_0-1)+(\alpha-(c-c_0)\alpha)=2c_0+(c-c_0-1)(3-\alpha)\geq0,
    \end{split}
    \end{equation*}
    where the last inequality holds because $0\leq c_0\leq c-1$ and $\alpha\leq3$. Hence, we may assume that $V_0(H_2)=\{v_0\}$. By definition, it follows that $\alpha\leq1$. Moreover, since each connected component of $H'$ is a subgraph of $H_2+f_2$, we know that $\alpha_i\leq\alpha+2$ for all $c_0+1\leq i\leq c$. Hence, if $V_0(H')=\emptyset$, it follows that
    \begin{equation*}
    \begin{split}
        f&\geq2+2c_0+3(c-c_0-1)+(\alpha-(c-c_0)(\alpha+2))=(c-c_0-1)(3-\alpha)+2(1-c+2c_0)\\
        &\geq2(c-c_0-1)+2(1-c+2c_0)=2c_0\geq0.
    \end{split}
    \end{equation*}
    So, we may assume that $V_0(H')=\{v_0\}$. If $v_0$ is isolated in $H'$, then $c_0\geq1$. Hence,
    \begin{equation*}
    \begin{split}
        f&\geq2c_0+3(c-c_0-1)+(\alpha-(c-c_0)(\alpha+2))\geq2(c_0-1)\geq0.
    \end{split}
    \end{equation*}
    So assume, without loss of generality, that $v_0\in V(H'_{c_0+1})$. By definition, $\alpha_{c_0+1}\leq\alpha$. Since $\alpha_i\leq\alpha+2$ for all $c_0+2\leq i\leq c$, we have
    \begin{equation*}
    \begin{split}
        f&\geq2c_0+3(c-c_0-1)+(\alpha-\alpha-(\alpha+2)(c-c_0-1))=(c-c_0-1)(1-\alpha)+2c_0\geq0,
    \end{split}
    \end{equation*}
    where the last inequality holds because $0\leq c_0\leq c-1$ and $\alpha\leq1$. We always have $f\geq0$, as required.
\end{proof}

\subsection{The union of two blockers, one of which is (2,3)-tight}
\label{sec:alpha>0}
With the same notation as that in Lemma~\ref{lemma:no GCB}, assume that $\alpha_k^j(H_1+f_1)=0$. By definition, this is equivalent to saying that $H_1+f_1$ is either balanced or $S_0(k,j)$ with $V_0(H_1)=\emptyset$. We consider the two cases separately, in Lemmas~\ref{lemma:bal.int.} and~\ref{lemma:alpha>1}, respectively. However, in Lemma~\ref{lemma:bal.int.}, we do not assume that $H_1+f_1$ is balanced. Instead, we make the slightly weaker assumption that $H_1\cap H_2$ is balanced 
(this weaker assumption will be useful when proving Lemma~\ref{lemma:alpha>1}, as well as Lemma~\ref{lemma: what i need for 1-red.}). 

\begin{lemma}
\label{lemma:bal.int.}
    Let $\Gamma$ be a cyclic group of order $k\geq4$.  For $2\leq j\leq k-2$, let $(G,\psi)$ be a $\mathbb{Z}_k^j$-gain tight $\Gamma$-gain graph with a free vertex $v$ of degree 3 which has no loop. Let $(G_1,\psi_1),(G_2,\psi_2)$ be obtained from $(G,\psi)$ by applying two different 1-reductions at $v$, which add the edges $f_1$ and $f_2$, respectively. For $i=1,2$, assume that $(G_i,\psi_i)$ has a blocker $H_i$, and let $H=H_1\cup H_2$. If $E(H_1\cap H_2)\neq\emptyset$ and $H_1\cap H_2$ is balanced, then $E(H)=2|V(H)|-3+\alpha_k^j(H+f_1+f_2)$.
\end{lemma}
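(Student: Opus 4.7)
The plan is to combine inclusion--exclusion with a structural analysis of how the gain groups of $H_1+f_1$ and $H_2+f_2$ amalgamate across the balanced intersection $H':=H_1\cap H_2$. Writing $\alpha_i := \alpha_k^j(H_i+f_i)$ and $\alpha := \alpha_k^j(H+f_1+f_2)$, inclusion--exclusion on vertex and edge counts, combined with the blocker identity $|E(H_i)| = 2|V(H_i)|-3+\alpha_i$, rearranges into
\[
|E(H)| - \bigl(2|V(H)|-3+\alpha\bigr) \;=\; (\alpha_1+\alpha_2-\alpha) \;+\; \bigl(2|V(H')|-3-|E(H')|\bigr),
\]
so the lemma reduces to verifying the single identity
\[
\alpha \;=\; \alpha_1+\alpha_2 + \bigl(2|V(H')|-3-|E(H')|\bigr).
\]
The balanced hypothesis on $H'$ guarantees, component by component, the $(2,3)$-sparsity bound inherited from $\mathbb{Z}_k^j$-gain sparsity of $(G,\psi)$, so the right-most summand is a non-negative integer.

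To evaluate $\alpha$ I exploit the balancedness of $H'$ once more: by Lemma~\ref{lemma: forests can have identity gains} I gauge-fix so that every edge of $H'$ carries the identity gain. Then every closed walk in $H+f_1+f_2$ decomposes into a concatenation of walks lying entirely in $H_1+f_1$ and walks lying entirely in $H_2+f_2$ (the cross-over portion through $H'$ contributes trivially), and consequently $\langle H+f_1+f_2\rangle$ is the subgroup of $\Gamma$ generated by $\langle H_1+f_1\rangle \cup \langle H_2+f_2\rangle$. Propositions~\ref{union of balanced and [...] is [...]}, \ref{union of [...] and [...] is [...]} and \ref{gcd proposition} then identify this subgroup explicitly.

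The argument concludes by case analysis on $(\alpha_1, \alpha_2)$. By Lemma~\ref{lemma:no GCB} no general-count blockers occur, so $\alpha_i\leq 2$. The remaining cases are catalogued by which element of the $S(k,j)$-classification each of $H_1+f_1, H_2+f_2$ falls into (balanced, $S_{\pm 1}(k,j)$, $S_0(k,j)$, or proper near-balanced), further refined by whether $V_0(H_i)=\emptyset$ or $V_0(H_i)=\{v_0\}$. In each configuration the relevant union proposition supplies $\langle H+f_1+f_2\rangle$, Definition~\ref{definition of sparsity, higher order rotation} supplies $\alpha$, and Proposition~\ref{lemma: there is no tight subgraph containing all three neighbours} together with Lemma~\ref{there is no fixed cut vertex} pins down the structure of $H'$; the slack $2|V(H')|-3-|E(H')|$ is then computed and matched to $\alpha-\alpha_1-\alpha_2$.

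The principal obstacle is the case analysis itself: each individual case is a short application of a union proposition, but the four $S(k,j)$-classes for each of $H_1+f_1, H_2+f_2$, combined with the fixed-vertex and disconnected-$H'$ variants, produce a heavy book-keeping load. A secondary difficulty is the disconnected-$H'$ scenario, where each additional edge-bearing component of $H'$ contributes $3$ to the slack, and matching this to $\alpha-\alpha_1-\alpha_2$ requires tracking how the identity-gauge on $H'$ propagates across components without introducing spurious gains.
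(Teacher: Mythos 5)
Your reduction via inclusion--exclusion to the identity $\alpha=\alpha_1+\alpha_2+\bigl(2|V(H')|-3-|E(H')|\bigr)$ is algebraically sound, and your overall skeleton (bound the slack using Proposition~\ref{lemma: there is no tight subgraph containing all three neighbours}, then run a case analysis through the union propositions) is the same as the paper's. However, the step you rely on to compute $\alpha$ contains a genuine gap: it is not true that $\langle H+f_1+f_2\rangle$ is generated by $\langle H_1+f_1\rangle\cup\langle H_2+f_2\rangle$ unless $H'=H_1\cap H_2$ is \emph{connected}. Cutting a closed walk of $H+f_1+f_2$ at its visits to $V(H')$ produces segments each lying in one $H_i+f_i$, but a segment joining two \emph{different} components of $H'$ cannot be closed up inside $H_i+f_i$ (there may be no path in $H'$ between those components), so its gain is not captured by $\langle H_i+f_i\rangle$. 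This is precisely the case the lemma must handle: the paper first shows, via the slack bound and Proposition~\ref{lemma: there is no tight subgraph containing all three neighbours}, that $H'$ has exactly one edge-bearing component and at most one further component, which is an isolated vertex; in that latter configuration $\alpha_1=\alpha_2=0$ (both blockers balanced), yet $H+f_1+f_2$ is \emph{proper near-balanced} by Proposition~\ref{union of [...] and [...] is [...]}(iii), so $\alpha=2$, whereas your generated-subgroup rule would return the trivial group and $\alpha=0$.

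Two further points. First, $\alpha_k^j$ is not a function of the isomorphism class of the gain group alone: proper near-balancedness (which forces $\alpha=2$ even when $\langle X\rangle$ lies outside every $S_i(k,j)$) is a property of the gain assignment, not of the group it generates, so ``identify the subgroup, then read off $\alpha$ from Definition~\ref{definition of sparsity, higher order rotation}'' under-determines $\alpha$ in the ``otherwise'' cases. Second, in the disconnected-$H'$ scenario you should not expect to ``match'' a slack of $3$ per additional edge-bearing component to $\alpha-\alpha_1-\alpha_2$: since $\alpha\leq3$, two or more edge-bearing components give $|E(H)|\geq2|\overline{V(H)}|$ and must be \emph{excluded} outright by Proposition~\ref{lemma: there is no tight subgraph containing all three neighbours}, which is how the paper proceeds. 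Once the component structure of $H'$ is pinned down first and Proposition~\ref{union of [...] and [...] is [...]}(iii) is invoked for the isolated-vertex component, your plan becomes essentially the paper's proof.
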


\begin{proof}
Let $H'=H_1\cap H_2$ have connected components $H_1',\dots,H_c'$ and suppose that, for some $c_0\leq c-1$, $H_1',\dots,H_{c_0}$ are isolated vertices and $H_{c_0+1},\dots,H_c$ have non-empty edge set. For $1\leq i\leq2$, use $\alpha_i$ to denote $\alpha_k^j(H_i+f_i)$. We also use $\alpha$ to denote $\alpha_k^j(H+f_1+f_2).$ Assume that $H'$ is balanced. Then, 
\begin{equation*}
        |E(H')|\leq\sum_{i=1}^{c_0}[2|V(H_i')|-2]+\sum_{i=c_0+1}^c[2|V(H_i')|-3]=2|V(H')|-2c_0-3(c-c_0).
    \end{equation*}
Hence, 
\begin{equation}
\label{eq:bal.int.}
    \begin{split}
        |E(H)|&\geq(2|V(H_1)|-3+\alpha_1)+(2|V(H_2)|-3+\alpha_2)-(2|V(H')|-2c_0-3(c-c_0))\\
        &=2|V(H)|-6+\alpha_1+\alpha_2+2c_0+3(c-c_0).
    \end{split}
\end{equation}
If $c-c_0\geq2$, then $|E(H)|\geq2|V(H)|+\alpha_1+\alpha_2+2c_0\geq2|\overline{V(H)}|$, contradicting Proposition~\ref{lemma: there is no tight subgraph containing all three neighbours}. Hence, $c-c_0=1$ and $|E(H)|\geq2|V(H)|-3+\alpha_1+\alpha_2+2c_0$. If $c_0\geq2$, then $|E(H)|\geq2|V(H)|+1$, contradicting the sparsity of $(G,\psi)$. Hence, $(c_0,c_1)$ is either $(0,1)$ or $(1,2)$.

Suppose that $(c_0,c_1)=(1,2)$. By Equation~\eqref{eq:bal.int.}, $|E(H)|\geq2|V(H)|-1+\alpha_1+\alpha_2$. By Proposition~\ref{lemma: there is no tight subgraph containing all three neighbours}, $V_0(H)=\emptyset$ and $\alpha_1=\alpha_2=0$. It follows that $H_1,H_2$ are balanced blockers. By Proposition~\ref{union of [...] and [...] is [...]}(iii), $H+f_1+f_2$ is proper near-balanced, so $\alpha=2$. Then, by the sparsity of $(G,\psi)$, $|E(H)|=2|V(H)|-1=2|V(H)|-3+\alpha$. 

Hence, we may assume that $(c_0,c_1)=(0,1)$ and so, by Equation~\eqref{eq:bal.int.}, 
\begin{equation}
\label{eq:bal.int.2}
    |E(H)|\geq2|V(H)|-3+\alpha_1+\alpha_2.
\end{equation}
By Proposition~\ref{lemma: there is no tight subgraph containing all three neighbours}, $\alpha_1+\alpha_2\leq2$. We look at the cases where $\alpha_1+\alpha_2=2$, $\alpha_1+\alpha_2=1$ and $\alpha_1+\alpha_2=0$ separately. In all such cases, we show that $|E(H)|=2|V(H)|-3+\alpha$, proving the result.

\medskip\textit{\textbf{Case 1: }$\alpha_1+\alpha_2=2$}\\
By Equation~\eqref{eq:bal.int.2}, $|E(H)|\geq2|V(H)|-1$ and so, by the sparsity of $(G,\psi)$, $V_0(H)=\emptyset$. Moreover, $H'$ is $(2,3)$-tight: otherwise, it is easy to see that $|E(H)|\geq2|V(H)|$, contradicting Proposition~\ref{lemma: there is no tight subgraph containing all three neighbours}. Assume, without loss of generality, that $(\alpha_1,\alpha_2)$ is one of $(1,1)$ and $(0,2)$. In the former case, $j$ is odd and $\left<H_1+f_1\right>=\left<H_2+f_2\right>\simeq\mathbb{Z}_2$. Since $H'$ is connected, every closed walk $W$ in $H+f_1+f_2$ can be decomposed as a concatenation of closed walks in $H_1+f_1$ and $H_2+f_2$. It follows, from the fact that $V_0(H)=\emptyset$, that $\left<H+f_1+f_2\right>\simeq\mathbb{Z}_2$. Then, by the sparsity of $(G,\psi)$, $H$ is $(2,1)$-tight, and the result holds. If $(\alpha_1,\alpha_2)=(0,2)$, then $H_1$ is a balanced blocker, and $H_2+f_2$ is either proper near-balanced or $S(k,j)$. In the former case, $H+f_1+f_2$ is proper near-balanced, by Proposition~\ref{union of [...] and [...] is [...]}(ii). In the latter, $H+f_1+f_2$ is $S(k,j)$, by Proposition~\ref{union of balanced and [...] is [...]}(i). In both cases, $\alpha=2$ and $|E(H)|=2|V(H)|-1=2|V(H)|-3+\alpha$, by the sparsity of $(G,\psi)$.

\medskip\textit{\textbf{Case 2: $\alpha_1+\alpha_2=1$}}\\
By Equation~\ref{eq:bal.int.2}, $|E(H)|\geq2|V(H)|-2$. It follows, from Proposition~\ref{lemma: there is no tight subgraph containing all three neighbours}, that $V_0(H)=\emptyset$. Assume, without loss of generality, that $(\alpha_1,\alpha_2)=(1,0).$ Then, $j$ is odd, $\left<H_1+f_1\right>\simeq\mathbb{Z}_2$, and $H_2$ is a balanced blocker. It follows, from Proposition~\ref{union of balanced and [...] is [...]}(i), that $\left<H+f_1+f_2\right>\simeq\mathbb{Z}_2$, and so $\alpha=1$. By the sparsity of $(G,\psi)$, $|E(H)|=2|V(H)|-2=2|V(H)|-3+\alpha$.

\medskip\textit{\textbf{Case 3: $\alpha_1+\alpha_2=0$}}\\
By Equation~\ref{eq:bal.int.2}, $|E(H)|\geq2|V(H)|-3$. Notice that, if $H'$ is not $(2,3)$-tight, then $|E(H)|\geq2|V(H)|-2$ and so $V_0(H)=\emptyset$ by Proposition~\ref{lemma: there is no tight subgraph containing all three neighbours}. It follows, that if $H'$ is not $(2,3)$-tight, then it does not have a fixed cut-vertex. On the other hand, if $H'$ is $(2,3)$-tight, then it does not have a fixed cut-vertex by Lemma~\ref{there is no fixed cut vertex}. So, $H'$ does not have a fixed cut-vertex. For each $1\leq i\leq2$, since $\alpha_i=0$, $H_i$ is either balanced, or it has a fixed vertex and is $S_0(k,j)$. If $H_1,H_2$ are balanced blocker, then $H+f_1+f_2$ is balanced by Proposition~\ref{union of [...] and [...] is [...]}(i). If one of $H_1+f_1,H_2+f_2$ is balanced, and the other is $S_0(k,j)$, then $H+f_1+f_2$ is $S_0(k,j)$ by Proposition~\ref{union of balanced and [...] is [...]}(i), and contains the fixed vertex. If $H_1+f_1,H_2+f_2$ are both $S_0(k,j)$, then so is $H+f_1+f_2$ by Proposition~\ref{gcd proposition}(ii), and it contains the fixed vertex. In all such cases, $\alpha=0$, and $|E(H)|=2|V(H)|-3=2|V(H)|-3+\alpha$, as required.
\end{proof}

\begin{lemma}
\label{lemma:alpha>1}
    Let $\Gamma$ be a cyclic group of order $k\geq4$. For $2\leq j\leq k-2$, let $(G,\psi)$ be a $\mathbb{Z}_k^j$-gain tight $\Gamma$-gain graph with a free vertex $v$ of degree 3 which has no loop. Let $(G_1,\psi_1),(G_2,\psi_2)$ be obtained from $(G,\psi)$ by applying two different 1-reductions at $v$, which add the edges $f_1$ and $f_2$, respectively. For $i=1,2$, assume that $(G_i,\psi_i)$ has a blocker $H_i$. Assume further that $V_0(H_1)=\{v_0\}$ and that $H_1+f_1$ is $S_0(k,j)$. If $E(H_1\cap H_2)\neq\emptyset$, then $H:=H_1\cup H_2$ satisfies $|E(H)|=2|V(H)|-3+\alpha_k^j(H+f_1+f_2)$.
\end{lemma}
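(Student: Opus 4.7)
The plan is to split on whether the intersection $H' := H_1 \cap H_2$ is balanced. Write $\alpha_i := \alpha_k^j(H_i + f_i)$ and $\alpha := \alpha_k^j(H + f_1 + f_2)$; the hypothesis gives $\alpha_1 = 0$ and hence $|E(H_1)| = 2|V(H_1)| - 3$. If $H'$ is balanced, then Lemma~\ref{lemma:bal.int.} applies directly and yields the claimed equality.

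Suppose instead that $H'$ is unbalanced. Because $H' \subseteq H_1 + f_1$ and $\langle H_1 + f_1\rangle \simeq \mathbb{Z}_n$ for some $n \in S_0(k,j)$, the gain group $\langle H'\rangle$ is a non-trivial subgroup $\mathbb{Z}_m$ of $\mathbb{Z}_n$; any such $m > 1$ satisfies $m \mid k$ and $j \equiv 0 \pmod m$, and $m = 2$ with $j$ odd is excluded, so $m \in S_0(k,j)$ and $H'$ is $S_0(k,j)$. In particular at least one connected component $H^*$ of $H'$ is unbalanced and itself $S_0(k,j)$, and sits inside $H_2$. Proposition~\ref{gcd proposition}(i), applied with $\gcd(n, |\langle H_2 + f_2\rangle|) \geq |\langle H^*\rangle| > 1$, rules out $H_2 + f_2$ being $S_{\pm 1}(k,j)$. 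A componentwise application of Proposition~\ref{union of balanced and [...] is [...]}(i) and Proposition~\ref{gcd proposition}(ii) to the decomposition $H + f_1 + f_2 = (H_1 + f_1) \cup (H_2 + f_2)$, using that the common subgraph contains a connected unbalanced $S_0(k,j)$ piece, then forces $H + f_1 + f_2$ itself to be $S_0(k,j)$, so that $\alpha = 2 - 2|V_0(H)|$.

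With $\alpha$ pinned down, the remainder is an edge count. Denoting the connected components of $H'$ by $H'_1,\ldots,H'_c$ with $H'_1,\ldots,H'_{c_0}$ isolated, the $\mathbb{Z}_k^j$-gain sparsity of $(G,\psi)$ gives
\[
|E(H')| \;\leq\; \sum_{i=c_0+1}^{c}\bigl(2|V(H'_i)|-3+\alpha_k^j(H'_i)\bigr),
\]
whence
\[
|E(H)| \;\geq\; 2|V(H)| - 3 + \alpha_2 + 2c_0 + 3(c-c_0-1) - \sum_{i=c_0+1}^{c}\alpha_k^j(H'_i).
\]
The matching upper bound $|E(H)| \leq 2|V(H)| - 3 + \alpha$ follows from sparsity applied to $H$ together with $\alpha_k^j(H) \leq \alpha$ (as $\langle H\rangle \subseteq \langle H + f_1 + f_2\rangle$, both balanced or $S_0(k,j)$). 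The proof concludes by case-checking the configurations of $(c, c_0, \alpha_2)$ and the location of $v_0$ (inside an edgeful component of $H'$, isolated in $H'$, or absent from $H'$), with Proposition~\ref{lemma: there is no tight subgraph containing all three neighbours} ruling out configurations in which the lower bound would strictly exceed $2|V(H)| - 3 + \alpha$. The main obstacle is this bookkeeping, most delicate when $V_0(H_2) = \{v_0\}$ also lies in $V(H')$: one must then simultaneously track the $\alpha$-contributions of the components of $H'$ (each $0$ or $2$ depending on balancedness and location of $v_0$) and the possible \emph{otherwise}-type value $\alpha_2 = 3 - 2|V_0(H_2)|$ that arises when $H_2 + f_2$ fails to be $S_0(k,j)$, in order to verify the equality in every configuration.
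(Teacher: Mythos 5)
Your overall strategy matches the paper's: dispose of the balanced-intersection case via Lemma~\ref{lemma:bal.int.}, then run an inclusion--exclusion edge count over the components of $H'=H_1\cap H_2$ and use Proposition~\ref{lemma: there is no tight subgraph containing all three neighbours} to eliminate configurations. Your arithmetic for the lower bound is correct, and the observation that every unbalanced component of $H'$ is itself $S_0(k,j)$ (its gain group being a nontrivial subgroup of $\mathbb{Z}_n$ with $n\in S_0(k,j)$) is a clean way to rule out $H_2+f_2$ being $S_{\pm1}(k,j)$ via Proposition~\ref{gcd proposition}(i).

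However, the step where you ``pin down'' $\alpha_k^j(H+f_1+f_2)=2-2|V_0(H)|$ \emph{before} the edge count has a genuine gap, for two reasons. First, Proposition~\ref{gcd proposition}(ii) and Proposition~\ref{union of balanced and [...] is [...]}(i) require the intersection of the two pieces to be connected (or connected together with an isolated fixed vertex); at that point you have no control over the components of $H'$ --- that structure only emerges from the subsequent count (in the paper, Proposition~\ref{lemma: there is no tight subgraph containing all three neighbours} applied to the lower bound is what forces $c_0=0,c=1$, or $c_0=1,c=2$ with $v_0$ isolated). If $H'$ had two edge-full components, a closed walk passing from one to the other through $H_1$ and returning through $H_2$ could acquire a gain lying in neither $\langle H_1+f_1\rangle$ nor $\langle H_2+f_2\rangle$, so the gain group of the union cannot be assembled componentwise. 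Second, your reduction still permits $H_2+f_2$ to be proper near-balanced with $\langle H_2+f_2\rangle\simeq\mathbb{Z}_m$ for some $m\notin S(k,j)$ (the gcd argument only excludes $S_{\pm1}$); neither proposition you cite covers this, and one needs Proposition~\ref{gcd proposition}(iii) or Proposition~\ref{union of [...] and [...] is [...]}(iv) to conclude the union is $S_0(k,j)$. Relatedly, you never invoke Lemma~\ref{lemma:no GCB}, so the general-count value $\alpha_2=3-2|V_0(H_2)|$ remains in play in your final case-check; it can be closed by the count, but the paper excludes it at the outset. The repair is to reverse your two middle steps: first run the count and apply Proposition~\ref{lemma: there is no tight subgraph containing all three neighbours} to force the structure of $H'$ and the value of $\alpha_2$, and only then identify $\langle H+f_1+f_2\rangle$.
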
 

\begin{proof}
    Let $H':=H_1\cap H_2$, let $H_1,\dots,H_{c_0}$ be the isolated vertices of $H'$, and $H_{c_0+1},\dots,H_c$ be the connected components of $H'$ with non-empty edge set. Assume that $c\geq c_0+1$. By Lemma~\ref{lemma:bal.int.}, we may assume that $H'$ is unbalanced. In particular, $H_2$ is not a balanced blocker. Moreover, by Lemma~\ref{lemma:no GCB}, we may assume that $H_2$ is not a general-count blocker. Throughout the proof, let $\alpha$ denote $\alpha_k^j(H_2+f_2)$. We look at the cases where $V_0(H')=\emptyset$ and $V_0(H')=\{v_0\}$ separately.

    First, suppose that $V_0(H')=\emptyset$. Since $V_0(H_1)=\{v_0\},$ it follows that $V_0(H_2)=\emptyset$. By assumption, this implies that $1\leq\alpha\leq2$. Since each connected component of $H'$ is a subgraph of $H_2+f_2$ and $V_0(H')=V_0(H_2)=\emptyset$,
    \begin{equation*}
        |E(H')|=\sum_{i=1}^c|E(H_i')|\leq\sum_{i=1}^{c_0}[2|V(H_i')|-2]+\sum_{i=1+c_0}^c[2|V(H_i')|-3+\alpha]=2|V(H')|-2c_0+(c-c_0)(\alpha-3).
    \end{equation*}
    Hence, 
    \begin{equation}
    \label{eq:S_0(k,j) blocker}
    \begin{split}
        |E(H)|&=|E(H_1)|+|E(H_2)|-|E(H')|\\
        &\geq(2|V(H_1)|-3)+(2|V(H_2)|-3+\alpha)-(2|V(H')|-2c_0+(c-c_0)(\alpha-3))\\
        &=2|V(H)|-6+\alpha+2c_0+(c-c_0)(3-\alpha)=2|\overline{V(H)}|-4+\alpha+2c_0+(c-c_0)(3-\alpha).
    \end{split}
    \end{equation}
    We show that $c_0=0$ and $c_1=1$. Assume, by contradiction, that $c-c_0\geq2$. Then, by Equation~\eqref{eq:S_0(k,j) blocker} and the fact that $\alpha\leq2$, we have $|E(H)|\geq2|\overline{V(H)}|+2-\alpha\geq2|\overline{V(H)}|$. This contradicts Proposition~\ref{lemma: there is no tight subgraph containing all three neighbours}. Hence, $c=c_0+1$ and $|E(H)|\geq2|\overline{V(H)}|-1+2c_0$, by Equation~\ref{eq:S_0(k,j) blocker}. By Proposition~\ref{lemma: there is no tight subgraph containing all three neighbours}, it follows that $c_0=0,c=1$ and $|E(H)|=2|\overline{V(H)}|-1$. If we show that $H+f_1+f_2$ is $S_0(k,j)$, then $|E(H)|=2|V(H)|-3+\alpha_k^j(H+f_1+f_2)$, as required. We show that $H+f_1+f_2$ is indeed $S_0(k,j)$. Since $1\leq\alpha\leq2$ and $V_0(H_2)=\emptyset$, exactly one of the following holds: $j$ is odd and $\left<H_2+f_2\right>\simeq\mathbb{Z}_2$; $H_2+f_2$ is $S(k,j)$; $H_2+f_2$ is proper near-balanced.
    If $\left<H_2+f_2\right>\simeq\mathbb{Z}_2$, then $H+f_1+f_2$ is $S_0(k,j)$ by Proposition~\ref{union of balanced and [...] is [...]}(ii). If $H_2+f_2$ is $S(k,j)$, then it is $S_0(k,j)$ by Proposition~\ref{union of [...] and [...] is [...]}(i). Hence, $H+f_1+f_2$ is $S_0(k,j)$ by Proposition~\ref{union of [...] and [...] is [...]}(ii). If $H_2+f_2$ is near-balanced, then $H+f_1+f_2$ is $S_0(k,j)$ by Proposition~\ref{union of [...] and [...] is [...]}(iv). So, whenever $V_0(H')=\emptyset$, the result holds.

    Now, assume that $V_0(H')=\{v_0\}$. This implies that $V_0(H_2)\neq\emptyset$. Hence, $|E(H_2)|=2|\overline{V(H_2)}|-1+\alpha$. If $v_0$ is isolated in $H'$, then $c_0\geq1$. Assume, without loss of generality, that $v_0$ is $H_1'$. Since each $H_i'$ is a subgraph of $H_1+f_1$, we have
    \begin{equation*}
    |E(H')|=\sum_{i=1}^c|E(H_i')|\leq2|\overline{V(H_1')}|+\sum_{i=2}^{c_0}[2|\overline{V(H_i')}|-2]+\sum_{i=c_0+1}^c[2|\overline{V(H_i')}|-1]=2|\overline{V(H')}|-2(c_0-1)-(c-c_0),
    \end{equation*}
    and so
    \begin{equation*}
        \begin{split}
            |E(H)|&=|E(H_1)|+|E(H_2)|-|E(H')|\\
            &\geq(2|\overline{V(H_1)}|-1)+(2|\overline{V(H_2)}|-1+\alpha)-(2|\overline{V(H')}|-2(c_0-1)-(c-c_0))\\
            &=2|\overline{V(H)}|-2+\alpha+2(c_0-1)+(c-c_0).
        \end{split}
    \end{equation*}
    If $c-c_0\geq2$ or if $c_0\geq2$, this contradicts Proposition~\ref{lemma: there is no tight subgraph containing all three neighbours}. Hence, we may assume that $c_0=1,c=2$. So, $|E(H)|\geq2|\overline{V(H)}|-1+\alpha$. 

    In a similar way, if $v_0$ is not an isolated vertex of $H'$, we can see that $|E(H)|\geq2|\overline{V(H)}|-2+\alpha+2c_0+(c-c_0).$ If $c_0\geq1$ or $c-c_0\geq2$, this contradicts Proposition~\ref{lemma: there is no tight subgraph containing all three neighbours}. Hence, $c_0=0,c=1$, and $|E(H)|\geq2|\overline{V(H)}|-1+\alpha$. Both when $v_0$ is an isolated vertex of $H'$ and when it isn't, Proposition~\ref{lemma: there is no tight subgraph containing all three neighbours} implies that $\alpha=0$ and $|E(H)|=2|\overline{V(H)}|-1$. Hence, it is enough show that $H+f_1+f_2$ is $S_0(k,j)$. Since $\alpha=0$ and $H_2$ is not a balanced blocker, $H_2+f_2$ is $S_0(k,j)$. Moreover, $H'$ is either connected, or it is composed of two connected components, one of which is the isolated fixed vertex. So, $H+f_1+f_2$ is $S_0(k,j)$ by Proposition~\ref{gcd proposition}(ii), and the result holds.
\end{proof}

\subsection{The union of two blockers with non-empty edge set}
\label{sec:un.block.}
\begin{lemma}
\label{lemma: what i need for 1-red.}
    Let $\Gamma$ be a cyclic group of order $k\geq4$. For $2\leq j\leq k-2$, let $(G,\psi)$ be a $\mathbb{Z}_k^j$-gain tight $\Gamma$-gain graph with a free vertex $v$ of degree 3 which has no loop. Let $(G_1,\psi_1),(G_2,\psi_2)$ be obtained from $(G,\psi)$ by applying two different 1-reductions at $v$, which add the edges $f_1$ and $f_2$, respectively. For $i=1,2$, assume that $(G_i,\psi_i)$ has a blocker $H_i$. If $E(H_1\cap H_2)\neq\emptyset$, then $|E(H)|=2|V(H)|-3+\alpha_k^j(H+f_1+f_2)$. 
\end{lemma}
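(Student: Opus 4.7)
The plan is to reduce to a finite case analysis on $(\alpha_1, \alpha_2) := (\alpha_k^j(H_1 + f_1),\, \alpha_k^j(H_2 + f_2))$, using the preceding three lemmas to cut down the number of configurations, and then identify $\alpha := \alpha_k^j(H + f_1 + f_2)$ via the union propositions of Section~\ref{sec:gaingraphs}.

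First, I would apply Lemma~\ref{lemma:no GCB} to conclude $\alpha_1, \alpha_2 \leq 2$, i.e.\ neither blocker is a general-count blocker. Next, Lemma~\ref{lemma:bal.int.} handles the case where $H' := H_1 \cap H_2$ is balanced (which in particular absorbs the case $H_1$ or $H_2$ being a balanced blocker, so $\alpha_i = 0$ with $H_i$ balanced cannot occur in the remaining cases). Lemma~\ref{lemma:alpha>1} disposes of the remaining possibility of $\alpha_i = 0$, namely $V_0(H_i) = \{v_0\}$ with $H_i + f_i$ being $S_0(k,j)$. After these reductions only the case $\alpha_1, \alpha_2 \in \{1, 2\}$ with $H'$ unbalanced remains.

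Second, I would write the inclusion-exclusion identity
\begin{equation*}
|E(H)| = (2|V(H_1)| - 3 + \alpha_1) + (2|V(H_2)| - 3 + \alpha_2) - |E(H')|,
\end{equation*}
decompose $H'$ into connected components with $c_0$ isolated vertices and $c - c_0$ components of non-empty edge set, and bound each component's $\alpha_k^j$-value by $\min(\alpha_1, \alpha_2)$ (since each component embeds into both $H_i + f_i$), with the usual correction when the fixed vertex is present. As in the proofs of Lemmas~\ref{lemma:bal.int.} and~\ref{lemma:alpha>1}, Proposition~\ref{lemma: there is no tight subgraph containing all three neighbours} should then force $c_0 = 0$ and $c = 1$, so that $H'$ is connected and unbalanced, and yields the lower bound $|E(H)| \geq 2|V(H)| - 3 + \alpha_1 + \alpha_2 - (\text{correction})$.

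Third, with $H'$ connected and unbalanced, I would determine $\alpha$ in each subcase by applying the appropriate union proposition: for $\alpha_1 = \alpha_2 = 1$, both $\langle H_i + f_i \rangle \cong \mathbb{Z}_2$ (or one is $S_{\pm 1}(k,j)$ with a fixed vertex), so Proposition~\ref{gcd proposition}(ii) gives $\langle H + f_1 + f_2 \rangle \cong \mathbb{Z}_2$ and $\alpha = 1$; for $\{\alpha_1, \alpha_2\} = \{1, 2\}$, Propositions~\ref{union of [...] and [...] is [...]}(iv) and~\ref{gcd proposition}(iii) propagate the larger group, giving $\alpha = 2$; and for $\alpha_1 = \alpha_2 = 2$, further subcases according to whether each $H_i + f_i$ is proper near-balanced or $S(k,j)$ are resolved by Propositions~\ref{union of balanced and [...] is [...]}(i), ~\ref{union of [...] and [...] is [...]}(i) and~\ref{gcd proposition}(ii), each yielding $\alpha = 2$. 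Matching this value of $\alpha$ with the lower bound from inclusion-exclusion and with the $\mathbb{Z}_k^j$-gain sparsity of $(G,\psi)$ applied to $H$ then pins down $|E(H)| = 2|V(H)| - 3 + \alpha$.

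The hardest step will be the subcase $\alpha_1 = \alpha_2 = 2$ when one $H_i + f_i$ is proper near-balanced and the other is $S(k,j)$, and more generally whenever Proposition~\ref{union of [...] and [...] is [...]}(i)--(ii) is needed: those statements require not only $(2,1)$-tightness of $H_i - f_i'$ for some $f_i' \in E(H_i)$ but also $|E(H_i)| = 2|V(H_i)|$, which does not follow immediately from the blocker definition. Extracting these tight-count equalities from the forced equality $|E(H)| = 2|V(H)| - 3 + \alpha$ and verifying that the required edge $f_i'$ exists in each relevant structural type (near-balanced vs.\ $S(k,j)$ with and without a fixed vertex) is where the bookkeeping concentrates.
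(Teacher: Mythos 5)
Your proposal follows essentially the same route as the paper's proof: the same three preceding lemmas to reduce to $1\leq\alpha_1,\alpha_2\leq2$ with $H_1\cap H_2$ unbalanced, the same inclusion--exclusion over the components of $H_1\cap H_2$ combined with Proposition~\ref{lemma: there is no tight subgraph containing all three neighbours} to force the intersection to be connected, and the same case split on $(\alpha_1,\alpha_2)$ resolved via the union propositions (and the required hypotheses $|E(H_i+f_i)|=2|V(H_i)|$ with $H_i$ being $(2,1)$-tight follow immediately from the blocker count when $\alpha_i=2$, so that step is less delicate than you anticipate). One small correction: in the case $\alpha_1=\alpha_2=1$ with $j$ odd, $\mathbb{Z}_2$ is excluded from every $S_i(k,j)$ by definition, so Proposition~\ref{gcd proposition}(ii) does not apply there; the paper instead first forces $V_0(H)=\emptyset$ (ruling out the $S_{\pm1}$ option) and then argues directly that every closed walk in $H+f_1+f_2$ decomposes as a concatenation of closed walks in $H_1+f_1$ and $H_2+f_2$ through the connected intersection, yielding $\left<H+f_1+f_2\right>\simeq\mathbb{Z}_2$.
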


\begin{proof}   
    Let $H'=H_1\cap H_2$ have connected components $H_1',\dots,H_c'$ and suppose that, for some $c_0\leq c-1$, $H_1',\dots,H_{c_0}$ are isolated vertices and $H_{c_0+1},\dots,H_c$ have non-empty edge set. We abbreviate $\alpha_k^j(H_i+f_i)$ to $\alpha_i$, for $i=1,2$. By Lemma~\ref{lemma:bal.int.}, we may assume that $H'$ is unbalanced. Moreover, by Lemmas~\ref{lemma:no GCB},~\ref{lemma:bal.int.} and~\ref{lemma:alpha>1}, we may assume that $1\leq\alpha_1,\alpha_2\leq2$. Without loss of generality, assume that $\alpha_1\geq\alpha_2$. We look at the cases where $(\alpha_1,\alpha_2)=(1,1),(2,1),(2,2)$, separately. 
    
    \medskip
    \textit{\textbf{Case 1: }$\alpha_1=\alpha_2=1$.} 

    If we show that $V_0(H')=\emptyset$ then, by the definition of $\alpha_1,\alpha_2$, $j$ is odd and $\left<H_i+f_i\right>\simeq\mathbb{Z}_2$ for some $1\leq i\leq2$. We show that $V_0(H')$ is indeed empty. So assume, by contradiction, that $|V_0(H')|=1$. By the sparsity of $(G,\psi)$, we have $|E(H_i')|\leq 2|\overline{V(H_i')}|$ for $i=c_0+1,\ldots, c$. If the fixed vertex is isolated, then $c_0\geq1$ and so
    \begin{equation*}
        |E(H')|=\sum_{i=1}^c|E(H_i')|\leq\sum_{i=1}^{c_0}[2|V(H_i')|-2]+\sum_{i=c_0+1}^c2|V(H_i')|=2|V(H')|-2c_0\leq2|V(H')|-2.
    \end{equation*}
    If the fixed vertex is not isolated, assume without loss of generality, that it lies in $H_{c_0+1}'$. Then, 
    \begin{equation*}
        |E(H')|=\sum_{i=1}^c|E(H_i')|\leq\sum_{i=1}^{c_0}[2|V(H_i')|-2]+[2|V(H_{1+c_0}')|-2]+\sum_{i=c_0+2}^c2|V(H_i')|=2|V(H')|-2c_0-2.
    \end{equation*}
    Since $c_0\geq0$, $|E(H')|\leq2|V(H')|-2$. Hence, in both cases we have 
    \begin{equation*}
        \begin{split}
            |E(H)|\geq(2|V(H_1)|-2)+(2|V(H_2)|-2)-(2|V(H')|-2)=2|V(H)|-2=2|\overline{V(H)}|.
        \end{split}
    \end{equation*}
    By the sparsity of $(G,\psi)$ and Proposition \ref{lemma: there is no tight subgraph containing all three neighbours}, this is a contradiction. So, $V_0(H')=\emptyset$, $j$ is odd and $\left<H_i+f_i\right>\simeq\mathbb{Z}_2$ for some $1\leq i\leq2$. Assume, without loss of generality, that $\left<H_1+f_1\right>\simeq\mathbb{Z}_2$. Then, since $H'$ is a subgraph of $H_1+f_1$ and $j$ is odd, $|E(H')|\leq2|V(H')|-2c$, and so
    \begin{equation}
    \label{eq: case 2b}
        \begin{split}
            |E(H)|\geq(2|V(H_1)|-2)+(2|V(H_2)|-2)-(2|V(H')|-2c)=2|V(H)|+2(c-2).
        \end{split}
    \end{equation}
    By the sparsity of $(G,\psi)$ and Proposition \ref{lemma: there is no tight subgraph containing all three neighbours}, this implies that $c=1$ and that $|V_0(H)|=0$. Hence, $|V_0(H_2)|=0$, and we have $\left<H_2+f_2\right>\simeq\mathbb{Z}_2$. Since $H'$ is connected, every closed walk $W$ in $H+f_1+f_2$ can be decomposed as a concatenation of closed walks in $H_1+f_1$ and $H_2+f_2$. Hence, $\left<H+f_1+f_2\right>\simeq\mathbb{Z}_2$. By the sparsity of $(G,\psi)$, and by Equation~\eqref{eq: case 2b}, $|E(H)|=2|V(H)|-3+\alpha_k^j(H+f_1+f_2)$.

    \medskip
    \textit{\textbf{Case 2:} $\alpha_1=2,\alpha_2=1$.}
    
    By the definition of $\alpha_1$, $|V_0(H_1)|=0$ and $H_1+f_1$ is either $S(k,j)$ or near-balanced. Notice that for each $1\leq i\leq c_0$, $|E(H'_i)|=2|V(H'_i)|-2<2|V(H'_i)|-1$. So, since $|V_0(H')|=0$ and $H'$ is a subgraph of $H_1+f_1$, $|E(H')|\leq\sum_{i=1}^c[2|V(H_i')|-1]=2|V(H')|-c$. Hence,
    \begin{equation}
    \label{eq. for alpha=2}
        \begin{split}
            |E(H)|\geq(2|V(H_1)|-1)+(2|V(H_2)|-2)-(2|V(H')|-c)=2|V(H)|-3+c\geq2|V(H)|-2,
        \end{split}
    \end{equation}
    since $c\geq1$. By Proposition \ref{lemma: there is no tight subgraph containing all three neighbours}, $|V_0(H)|=0$. By the definition of $\alpha_2$, this implies that $j$ is odd and $\left<H_2+f_2\right>\simeq\mathbb{Z}_2$. Then, since $H'$ is a subgraph of $H_2+f_2$, each connected component of $H'$ must be $(2,2)$-sparse. It follows that 
    $|E(H')|\leq2|V(H')|-2c$ and
    \begin{equation*}
        |E(H)|\geq(2|V(H_1)|-1)+(2|V(H_2)|-2)-(2|V(H')|-2c)=2|V(H)|+2c-3.
    \end{equation*}
    This implies that $c=1$, by the sparsity of $(G,\psi)$. Since $H'$ is unbalanced, $H_1+f_1$ is not proper near-balanced: otherwise, $\left<H_1+f_1\right>\simeq\mathbb{Z}_2$, by Proposition~\ref{union of [...] and [...] is [...]}(iii), which contradicts the definition of proper near-balancedness. It follows that $H_1+f_1$ is $S(k,j)$. Then, by Proposition~\ref{union of balanced and [...] is [...]}(ii), $H+f_1+f_2$ is $S(k,j)$ and so $\alpha_k^j(H+f_1+f_2)=2$. Hence, 
    \begin{equation}
    \label{eq: case 3b}
        |E(H)|\geq2|V(H)|-1= 2|V(H)|-3+\alpha_k^j(H+f_1+f_2).
    \end{equation}
    By the sparsity of $(G,\psi)$, Equation~\eqref{eq: case 3b} holds with equality.

    \medskip\textit{\textbf{Case 3: }$\alpha_1=\alpha_2=2.$}

    In a similar way as we did in Case 2, we can see that $|E(H)|\geq2|V(H)|-2+c$. If $c\geq2$ or if $V_0(H)\neq\emptyset$, then $|E(H)|\geq2|V(H)|$, contradicting Proposition~\ref{lemma: there is no tight subgraph containing all three neighbours}. So $c=1$ and $V_0(H)=\emptyset$. Since $H'$ is a subgraph of $H_1+f_1$ and $V_0(H')=V_0(H_1+f_1)=\emptyset$, it is $(2,1)$-sparse. If $|E(H')|\leq2|V(H')|-2$, it is easy to see that $|E(H)\geq2|V(H)|$, contradicting Proposition~\ref{lemma: there is no tight subgraph containing all three neighbours}. Hence, $H'$ is $(2,1)$-tight.
    
    If exactly one of $H_1+f_1,H_2+f_2$ is near-balanced, then $H+f_1+f_2$ is $S(k,j)$ by Proposition~\ref{gcd proposition}(iii).
    If both $H_1+f_1,H_2+f_2$ are $S(k,j)$, then they are both $S_i(k,j)$ for some $i\in\{0,-1,1\}$, by Proposition~\ref{gcd proposition}(i). So, by Proposition~\ref{gcd proposition}(ii), $H+f_1+f_2$ is also $S_i(k,j)$. If neither $H_1+f_1$ nor $H_2+f_2$ is $S(k,j)$, then they are both proper near-balanced. Hence, $H'$ is also proper near-balanced and so $H+f_1+f_2$ is near-balanced by Proposition~\ref{union of [...] and [...] is [...]}(i). By the sparsity of $(G,\psi)$ and Proposition \ref{lemma: there is no tight subgraph containing all three neighbours}, $|E(H)|=2|V(H)|-1$ and $|V_0(H)|=0$. Since $\alpha_k^j(H+f_1+f_2)=2$, we have $|E(H)|=2|V(H)|-3+\alpha_k^j(H+f_1+f_2)$, as required.  
\end{proof}

Proposition~\ref{lemma: there is no tight subgraph containing all three neighbours}, and Lemmas~\ref{lemma:H+v=H+f_1+f_2},~\ref{lemma: what i need for 1-red.} imply the following result.

\begin{corollary}
\label{cor:H+f_1+f_2 bal. or near-bal.}
    Let $\Gamma$ be a cyclic group of order $k\geq4$. For $2\leq j\leq k-2$, let $(G,\psi)$ be a $\mathbb{Z}_k^j$-gain tight $\Gamma$-gain graph with a free vertex $v$ of degree 3 which has no loop. Let $(G_1,\psi_1),(G_2,\psi_2)$ be obtained from $(G,\psi)$ by applying two different 1-reductions at $v$, which add the edges $f_1$ and $f_2$, respectively. For $i=1,2$, assume that $(G_i,\psi_i)$ has a blocker $H_i$, and suppose that $E(H_1\cap H_2)\neq\emptyset$. If $N(v)\neq3$, or if $f_1$ and $f_2$ do not share a fixed vertex, then $H_1\cup H_2+f_1+f_2$ is proper near-balanced.
\end{corollary}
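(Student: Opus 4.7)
The plan is to compare the gain-sparsity deficiency of $H := H_1 \cup H_2$ (together with the two added edges $f_1, f_2$) against that of the subgraph $H+v$ of $G$ (obtained by reinstating $v$ with its three incident edges), and to exploit the fact that, by Lemma~\ref{lemma:H+v=H+f_1+f_2}, the underlying gain groups $\left<H+v\right>$ and $\left<H+f_1+f_2\right>$ coincide up to isomorphism.

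First, applying Lemma~\ref{lemma: what i need for 1-red.} yields $|E(H)| = 2|V(H)| - 3 + \alpha_k^j(H + f_1 + f_2)$. The hypotheses on the two 1-reductions guarantee that every neighbour of $v$ lies in $V(H)$: for $N(v) \leq 2$ this is automatic from the blocker definition, while for $N(v) = 3$ two distinct 1-reductions at a degree-$3$ vertex must merge different pairs of neighbours, forcing the third neighbour to appear as an endpoint of $f_2$ and hence to lie in $V(H_2) \subseteq V(H)$. So $H + v$ is a well-defined connected subgraph of $G$ with $|E(H+v)| = |E(H)| + 3$ and $|V(H+v)| = |V(H)| + 1$. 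Applying $\mathbb{Z}_k^j$-gain sparsity to $H+v$ and substituting the expression for $|E(H)|$ gives
\begin{equation*}
\alpha_k^j(H+v) \,\geq\, \alpha_k^j(H + f_1 + f_2) + 1.
\end{equation*}

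Next, I would invoke Lemma~\ref{lemma:H+v=H+f_1+f_2} to obtain $\left<H+v\right> \simeq \left<H+f_1+f_2\right>$, and note that $V_0(H+v) = V_0(H+f_1+f_2) = V_0(H)$ since $v$ is free. Because the properties of being balanced, of having $\left<\cdot\right> \simeq \mathbb{Z}_2$, and of being $S_0(k,j)$ or $S_{\pm 1}(k,j)$ all depend only on the isomorphism class of the gain group, $\alpha_k^j(H+v)$ and $\alpha_k^j(H+f_1+f_2)$ must agree whenever one of these four classes applies, contradicting the strict inequality above. Hence both graphs fall into the ``otherwise'' case of Definition~\ref{definition of sparsity, higher order rotation}, in which the two $\alpha$-values can differ only through proper near-balancedness. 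Inspecting the definition, the only configuration compatible with the inequality is $|V_0(H)| = 0$, $\alpha_k^j(H+v) = 3$, and $\alpha_k^j(H+f_1+f_2) = 2$, which means precisely that $H+f_1+f_2$ is proper near-balanced (while $H+v$ is not).

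The main obstacle is the final case check, which crucially exploits that proper near-balancedness is only defined when $V_0 = \emptyset$ and, within the ``otherwise'' class with $V_0 = 0$, lowers $\alpha_k^j$ by exactly $1$. Proposition~\ref{lemma: there is no tight subgraph containing all three neighbours} is absorbed into the proof of Lemma~\ref{lemma: what i need for 1-red.}, where it pins down the edge count of $H$ tightly enough that the resulting comparison with $H+v$ forces the near-balanced conclusion rather than a generic unbalanced configuration.
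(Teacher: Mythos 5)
Your proposal is correct and follows essentially the same route as the paper's proof: it combines Lemma~\ref{lemma: what i need for 1-red.} (to pin down $|E(H_1\cup H_2)|$) with Lemma~\ref{lemma:H+v=H+f_1+f_2} (to identify the gain groups of $H+v$ and $H+f_1+f_2$) and the sparsity of $(G,\psi)$ at the subgraph $H+v$, concluding that the only way the two $\alpha_k^j$-values can differ is through proper near-balancedness. The paper phrases this as a contradiction via Proposition~\ref{lemma: there is no tight subgraph containing all three neighbours}, whereas you derive the inequality $\alpha_k^j(H+v)\geq\alpha_k^j(H+f_1+f_2)+1$ directly; these are the same argument, and your explicit check that all neighbours of $v$ lie in $V(H_1\cup H_2)$ is a point the paper leaves implicit.
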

\begin{proof}
   Let $H=H_1\cup H_2$. Assume that $N(v)\neq3$, or that $f_1$ and $f_2$ do not share a fixed vertex. Assume, by contradiction, that $H+f_1+f_2$ is not proper near-balanced. By Lemma~\ref{lemma:H+v=H+f_1+f_2}, $\left<H+f_1+f_2\right>=\left<H+v\right>$. Then we have $\alpha_j(H+v)=\alpha_j(H+f_1+f_2)$. By Lemma~\ref{lemma: what i need for 1-red.}, $|E(H)|=2|V(H)|-3+\alpha_k^j(H+v)$, which contradicts Proposition~\ref{lemma: there is no tight subgraph containing all three neighbours}. Hence, $H+f_1+f_2$ is proper near-balanced.
\end{proof}

\section{A gain-tight graph admits a reduction} \label{sec:reduct}

The following result is crucial for the combinatorial results of the paper. We show that, given a vertex $v$ of degree 3, we may always apply an admissible 1-reduction at $v$  except in one special case.

\begin{theorem}
\label{theorem on 1-reductions}
For $k\geq4,$ let $\Gamma=\left<\gamma\right>\simeq\mathbb{Z}_k$ through the isomorphism defined by letting $\gamma\mapsto1$.
Let $(G,\psi)$ be a $\Gamma$-gain graph with a free vertex $v$ of degree 3 and no loop. Suppose that $(G,\psi)$ is $\mathbb{Z}_k^j$-tight for some $2\leq j\leq k-2$. If there is not an admissible $1$-reduction at $v$, then  
    $k$ is even and $j$ is odd, $v$ has exactly two neighbours, only one of which is free, call it $v_1$. Moreover, the $2$-cycle $v,v_1,v$ has gain $\gamma^{k/2}$ (see Figure~\ref{degree 2 vertex problem image pt. 2}). 

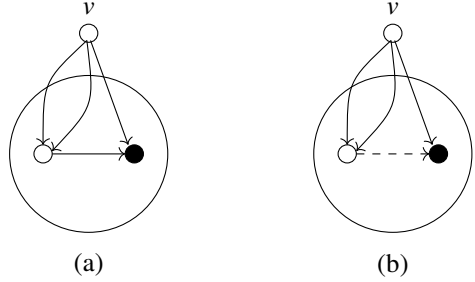
\begin{figure}[H]
    \centering
    \begin{tikzpicture}[scale=.8]
        \draw(3.5,0) circle (0.15cm);  
  \draw (4.25,0) circle (1.3cm);
  \draw[fill=black] (5,0) circle (0.15cm);
  \draw (4.25, 2) circle (0.15cm);
  \node[above] at (4.25, 2.15){$v$};
  \draw[->] (4.28, 1.88) -- (4.9, 0.15);
  \draw[->] (4.22,1.88) .. controls (4.35,0.8) .. (3.65, 0.05);
  \draw[->] (4.22,1.88) .. controls (3.5,1.2) .. (3.5, 0.15);
  \draw[->] (3.65,0) -- (4.85,0);
  \node[below] at (4.25, -1.5) {(a)};

  \draw(8.5,0) circle (0.15cm);  
  \draw (9.25,0) circle (1.3cm);
  \draw[fill=black] (10,0) circle (0.15cm);
  \draw (9.25, 2) circle (0.15cm);
  \node[above] at (9.25, 2.15){$v$};
  \draw[->] (9.28, 1.88) -- (9.9, 0.15);
  \draw[->] (9.22,1.88) .. controls (9.35,0.8) .. (8.65, 0.05);
  \draw[->] (9.22,1.88) .. controls (8.5,1.2) .. (8.5, 0.15);
  \draw[->, dashed] (8.65,0) -- (9.85,0);
  \node[below] at (9.25, -1.5) {(b)};

    \end{tikzpicture}
    \caption{Two instances of a vertex $v$ of degree 3. In both cases $v$ has two neighbours, one of which is fixed. In (a) there is an edge between the neighbours of $v$, in (b) there isn't.}
    \label{degree 2 vertex problem image pt. 2}
\end{figure}
\end{theorem}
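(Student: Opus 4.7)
The plan is to argue by contradiction. Suppose no $1$-reduction at $v$ is admissible. Every choice of ``deleted edge'' from the three edges incident to $v$ (equivalently, every choice of which incident edge plays the role of $e_3$ in Definition~\ref{defn: 1-extension}) then yields a new gain graph that either fails gain-sparsity (producing a blocker in the sense of Definition~\ref{blocker defn for k-fold rotation}) or has a loop whose singleton subgraph already violates $\mathbb{Z}_k^j$-gain-sparsity. I would split into cases by $N(v) \in \{1, 2, 3\}$ and enumerate the at most three $1$-reductions in each.

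The first step is to exploit Section~\ref{sec:blockers}. For any two blockers $H_i, H_j$ arising from distinct $1$-reductions, Lemma~\ref{lemma: what i need for 1-red.} combined with Lemma~\ref{lemma:H+v=H+f_1+f_2} shows that, if $E(H_i \cap H_j) \neq \emptyset$ and $f_i, f_j$ do not share a fixed vertex, then $|E(H_i \cup H_j)| = 2|V(H_i \cup H_j)| - 3 + \alpha_k^j(H_i \cup H_j + v)$. When $N(v) = 3$ the endpoints of $f_i$ and $f_j$ together cover $\{v_1, v_2, v_3\}$, so $H_i \cup H_j$ is a subgraph of $G - v$ containing all neighbours of $v$ and saturating the corresponding sparsity count, contradicting Proposition~\ref{lemma: there is no tight subgraph containing all three neighbours}; an analogous coverage argument handles $N(v) \leq 2$ by choosing a different pair of reductions. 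One may therefore assume $E(H_i \cap H_j) = \emptyset$ for every pair, or that $f_i, f_j$ share a fixed vertex (only possible when $N(v) = 3$).

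In the edge-disjoint-intersection case, I would run a direct count on $|E(H_i \cup H_j)|$ using that each $H_i$ individually saturates its count and that $(G, \psi)$ is $\mathbb{Z}_k^j$-tight, again producing a tight subgraph of $G - v$ that contains all neighbours of $v$ and contradicts Proposition~\ref{lemma: there is no tight subgraph containing all three neighbours}. The ``shared fixed vertex'' subcase within $N(v) = 3$ needs a finer argument exploiting the explicit form of $\alpha_k^j$ when a fixed vertex is common to both $f_i$ and $f_j$. These arguments together rule out $N(v) = 3$ and $N(v) = 1$ entirely and reduce the $N(v) = 2$ case to the situation in which $v$ has a free neighbour $v_1$ joined to $v$ by a $2$-cycle and a second neighbour $v_2$.

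The final step is to pin down the exceptional configuration via the loop-producing $1$-reduction at $v$. Deleting the edge $e_3 = (v, v_2)$ produces a loop at $v_1$ with gain $\delta = \psi(e_1)^{-1} \psi(e_2) \neq \text{id}$, and this reduction is inadmissible precisely when the singleton loop subgraph already violates $\mathbb{Z}_k^j$-sparsity, i.e.\ when the value of $\alpha_k^j$ applied to the loop is at most $1$. By Definition~\ref{definition of sparsity, higher order rotation} this happens exactly when $j$ is odd and $\langle \delta \rangle \simeq \mathbb{Z}_2$, which forces $k$ even and $\delta = \gamma^{k/2}$. The inadmissibility of the two remaining edge-producing reductions (together with the absence of an admissible loop reduction) then forces $v_2$ to be the fixed vertex, yielding the exceptional configuration stated. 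The main obstacle I anticipate is the careful bookkeeping in the shared-fixed-vertex subcase of $N(v) = 3$, where Corollary~\ref{cor:H+f_1+f_2 bal. or near-bal.} does not directly apply and the coverage argument must be rerun from the sparsity definitions.
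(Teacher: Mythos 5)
Your overall strategy (contradiction, case split on $N(v)\in\{1,2,3\}$, unions of blockers via the Section~\ref{sec:blockers} machinery) matches the paper's, but there are genuine gaps at the points where the real work happens. The central one is in your first step: you claim that if $E(H_i\cap H_j)\neq\emptyset$ then Lemma~\ref{lemma: what i need for 1-red.} plus Lemma~\ref{lemma:H+v=H+f_1+f_2} already contradict Proposition~\ref{lemma: there is no tight subgraph containing all three neighbours}. This does not follow. Lemma~\ref{lemma: what i need for 1-red.} gives $|E(H)|=2|V(H)|-3+\alpha_k^j(H+f_i+f_j)$, and Lemma~\ref{lemma:H+v=H+f_1+f_2} only transfers the \emph{generated subgroup} $\left<H+f_i+f_j\right>\simeq\left<H+v\right>$. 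Since $\alpha_k^j$ is not determined by the generated subgroup alone --- proper near-balancedness is a strictly finer property --- it can happen that $H+f_i+f_j$ is proper near-balanced (so $\alpha_k^j(H+f_i+f_j)=2$) while $H+v$ is not (so $\alpha_k^j(H+v)=3$), and then no violation of the $\mathbb{Z}_k^j$-count for $H+v$ arises. This is exactly why Corollary~\ref{cor:H+f_1+f_2 bal. or near-bal.} concludes only that $H_i\cup H_j+f_i+f_j$ is proper near-balanced rather than an outright contradiction, and each of the cases $N(v)=1,2,3$ then needs a separate argument to kill this surviving case (two distinct loops at one vertex cannot coexist with near-balancedness when $N(v)=1$; an analysis of base vertices and path gains showing $H+v$ is itself near-balanced when $N(v)=2,3$). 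Your proposal skips all of this.

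Two further points. First, in the pairwise edge-disjoint case your ``direct count on $|E(H_i\cup H_j)|$'' is not enough: with $u\in V(H_i\cap H_j)$ and $\alpha$-values as large as $2$ one only gets $|E(H_i\cup H_j)|\geq 2|V(H_i\cup H_j)|-2$, which does not violate sparsity. The paper has to take the union of all \emph{three} blockers and then argue that at most one of the $H_i+f_i$ can have $\left<H_i+f_i\right>\simeq\mathbb{Z}_2$ and at most one can be $S(k,j)$, which is what finally forces the contradiction. Second, your last step asserts that the loop-producing $1$-reduction at $v$ is inadmissible ``precisely when the singleton loop subgraph already violates $\mathbb{Z}_k^j$-sparsity''; the ``only if'' direction is false, since a blocker for that reduction may be a large subgraph of $G-v$ containing $v_1$ rather than the single loop. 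Consequently the claim that inadmissibility of all three reductions ``forces $v_2$ to be the fixed vertex'' is exactly the content of Proposition~\ref{pro: N(v)=2,v2 free} (both neighbours free always admits a reduction) and is not established by your argument.
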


The proof of Theorem~\ref{theorem on 1-reductions} is lengthy. Hence, we look at the cases  $N(v)=1,2,3$, separately.

\subsection{Applying a 1-reduction at a vertex with exactly one neighbour}
\begin{proposition}
    \label{thm:N(v)=1}
    For $k\geq4$, let $\Gamma=\left<\gamma\right>\simeq\mathbb{Z}_k$ through the isomorphism defined by $\gamma\mapsto1$. For $2\leq j\leq k-2$, let $(G,\psi)$ be a $\mathbb{Z}_k^j$-gain tight $\Gamma$-gain graph with a vertex $v$ of degree 3. Suppose that $v$ has no loop, and exactly one neighbour $u$. Then, there is an admissible 1-reduction at $v$.
\end{proposition}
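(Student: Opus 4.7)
The plan is a proof by contradiction. By Lemma~\ref{lemma: forests can have identity gains} we may set $\psi(e_1)=\textrm{id}$ and denote $g_2:=\psi(e_2), g_3:=\psi(e_3)$. Note $u$ must be free, since the fixed vertex cannot be incident to parallel edges. Each 1-reduction at $v$ deletes $v$ with its three edges and attaches at $u$ a loop whose gain is $g_2$, $g_3$, or $g_2^{-1}g_3$; denote these loops $\ell_1,\ell_2,\ell_3$, and assume their gains are pairwise distinct (the degenerate case $g_3=g_2^2$ produces only two distinct reductions and is handled by an analogous but shorter argument). Suppose for contradiction that each reduction is inadmissible, so each has a blocker $H_i\subseteq G-v$: a connected subgraph containing $u$ with $|E(H_i)|=2|V(H_i)|-3+\alpha_i$ and $\alpha_i:=\alpha_k^j(H_i+\ell_i)\geq 1$ (since $\psi(\ell_i)\neq\textrm{id}$). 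Moreover $\alpha_i=1$ forces $j$ odd and $\psi(\ell_i)=\gamma^{k/2}$, so at most one $\alpha_i$ can equal $1$.

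\textbf{Case 1:} no pair $H_i,H_j$ shares an edge. Since $u\in\bigcap_i H_i$, inclusion-exclusion gives $|V(H_1\cup H_2\cup H_3)|\leq\sum_i|V(H_i)|-2$. The sparsity bound $|E|\leq 2|V|$ applied to the subgraph $H_1\cup H_2\cup H_3+v+e_1+e_2+e_3$ of $G$ then forces $\sum_i\alpha_i\leq 4$. On the other hand, since at most one $\alpha_i=1$ and the rest are $\geq 2$, we have $\sum_i\alpha_i\geq 1+2+2=5$. Contradiction.

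\textbf{Case 2:} some pair, say $E(H_1\cap H_2)\neq\emptyset$, shares an edge. By Corollary~\ref{cor:H+f_1+f_2 bal. or near-bal.}, $H_1\cup H_2+\ell_1+\ell_2$ is proper near-balanced. Since both loops sit at $u$ with distinct non-identity gains, the characterization of proper near-balancedness (Lemma~4.1 in \cite{Ikeshita}) forces $u$ to be the base vertex and $\psi(\ell_1)\psi(\ell_2)=\textrm{id}$. A second intersecting pair would yield an analogous relation, which together with the first would force two of the three loop gains to coincide, contradicting their distinctness; so exactly one pair of blockers shares an edge.

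Now apply Lemma~\ref{lemma: what i need for 1-red.} (yielding $|E(H_1\cup H_2)|=2|V(H_1\cup H_2)|-1$) together with the sparsity of $G$ to the subgraph $H_1\cup H_2\cup H_3+v+e_1+e_2+e_3$. A short count forces $\alpha_3=1$, $(H_1\cup H_2)\cap H_3=\{u\}$, and the $\alpha$-value of the whole subgraph to be exactly $3$. Hence $\psi(\ell_3)=\gamma^{k/2}$, and combining this with $\psi(\ell_1)\psi(\ell_2)=\textrm{id}$ in either admissible configuration gives $\left<g_2,g_3\right>\simeq\mathbb{Z}_4$ and $4\mid k$. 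But then the gain group of $H_1\cup H_2\cup H_3+v+e_1+e_2+e_3$ is exactly this $\mathbb{Z}_4$, and for $j$ odd we have $4\in S_{\pm 1}(k,j)$ (since $j\equiv\pm 1\pmod 4$). Thus this subgraph is $S(k,j)$ with $\alpha\leq 2$, contradicting the forced $\alpha=3$. The main obstacle is this final step, which requires combining the sharp sparsity equality from Lemma~\ref{lemma: what i need for 1-red.} with the group-theoretic constraint from Corollary~\ref{cor:H+f_1+f_2 bal. or near-bal.} to trap the gain group inside $S(k,j)$ and produce the numerical contradiction.
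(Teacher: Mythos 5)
Your overall strategy (contradiction via three candidate loops at $u$, blockers $H_1,H_2,H_3$, pairwise edge-disjointness, then a count) is the paper's strategy, but two steps contain genuine gaps. First, your $\alpha$-bookkeeping ignores the fixed-vertex corrections in the definition of $\alpha_k^j$. The claim ``$\alpha_i\geq 1$ since $\psi(\ell_i)\neq\textrm{id}$'' fails when $v_0\in V(H_i)$ and $H_i+\ell_i$ is $S_0(k,j)$, in which case $\alpha_i=2-2|V_0|=0$; likewise $\alpha_i=1$ can arise from $H_i+\ell_i$ being $S_{\pm1}(k,j)$ with a fixed vertex, not only from $\left<H_i+\ell_i\right>\simeq\mathbb{Z}_2$. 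Hence ``at most one $\alpha_i$ equals $1$'' and the bound $\sum_i\alpha_i\geq 5$ in your Case~1 are unjustified precisely in the non-free-action setting that is the point of this paper (e.g.\ two blockers through $v_0$ that are $S_{\pm1}(k,j)$ give $\alpha_1=\alpha_2=1$ with neither being $\mathbb{Z}_2$, and $\sum_i\alpha_i=4$ yields no contradiction). The paper closes exactly this hole with an additional step you omit: a separate $(2,m,1)$-tightness count showing that at most one of $H_1+f_1,H_2+f_2,H_3+f_3$ can be $S(k,j)$, which is what ultimately rules out the fixed-vertex configurations.

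Second, your Case~2 starts from an impossible premise and then compounds the error. If $E(H_1\cap H_2)\neq\emptyset$, Corollary~\ref{cor:H+f_1+f_2 bal. or near-bal.} would force $H_1\cup H_2+\ell_1+\ell_2$ to be proper near-balanced; but this graph contains two loops at $u$ whose gains are neither equal nor mutually inverse, and a short check against the characterisation of near-balancedness shows such a graph can never be near-balanced (with base $u$ the two loop gains would have to lie in $\{\delta,\delta^{-1}\}$, forcing them to be equal or inverse; with any other base the concatenated walk around both loops gives the same contradiction). So the correct conclusion — and the paper's one-line observation — is that $E(H_s\cap H_t)=\emptyset$ for every pair, and there is no Case~2 at all. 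Your own deduction $\psi(\ell_1)\psi(\ell_2)=\textrm{id}$ already contradicts the distinctness of the loops as gain-graph edges, so the elaborate continuation (forcing $\left<g_2,g_3\right>\simeq\mathbb{Z}_4$, $4\mid k$, and an $\alpha$-value of $3$) is built on a vacuous hypothesis and, in any case, only treats $j$ odd. Repair: delete Case~2 in favour of the loop observation, and add the ``at most one $S(k,j)$'' lemma before running the final count, splitting on whether $H_1\cup H_2\cup H_3$ contains the fixed vertex.
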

\begin{proof}
    Notice that $u,v$ must be free, since they have parallel edges. Let $e_1,e_2,e_3$ be the edges incident to $u$ and $v$, with $g_i:=\psi(e_i)$ for $1\leq i\leq3$. By Lemma~\ref{lemma: forests can have identity gains}, we may assume that $g_1=\textrm{id}$. Moreover, $g_2,g_3,g_2g_3^{-1}\neq\textrm{id}$ by the definition of gain graph. Let $(G_1,\psi_1),(G_2,\psi_2)$ and $(G_3,\psi_3)$ be obtained from $G-v$ by adding the loops $f_1,f_2,f_3$ at $u$ with gains $g_2,g_3,g_2g_3^{-1}$, respectively. Assume, by contradiction, that for each $1\leq i\leq3$, $(G_i,\psi_i)$ has a blocker $H_i$, and for all such $i$ let $\alpha_i$ denote $\alpha_k^j(H_i+f_i)$. Notice that, for each $1\leq i\leq3$, $H_i$ is neither a balanced blocker (since $H_i+f_i$ contains a loop), nor a general-count blocker (by Proposition~\ref{lemma: there is no tight subgraph containing all three neighbours}). Since $g_2,g_3,g_2g_3^{-1}\neq\textrm{id}$, at most one of $g_2,g_3,g_2g_3^{-1}$ is $\gamma^{k/2}$, and so at most one of $\left<H_1+f_1\right>,\left<H_2+f_2\right>,\left<H_3+f_3\right>$ is isomorphic to $\mathbb{Z}_2$.

    Notice that, for all $1\leq s\neq t\leq3$, $H_s\cup H_t+f_s+f_t$ contains a vertex with two different loops, and so it is not proper near-balanced. It follows, from Corollary~\ref{cor:H+f_1+f_2 bal. or near-bal.} that $E(H_s\cap H_t)=\emptyset$ for all $1\leq s\neq t\leq3$.
    
    We now show that at most one of $H_1+f_1,H_2+f_2,H_3+f_3$ is $S(k,j)$. To do so, fix some $1\leq s\neq t\leq3$ and assume, by contradiction, that $H_s+f_s,H_t+f_t$ are both $S(k,j)$. Then, $H_s$ is $(2,m_s,1)$-tight and $H_t$ is $(2,m_t,1)$-tight, for some $0\leq m_s,m_t\leq1$. Since $u\in V(H_s\cap H_t)$ is free, we have
    \begin{equation*}
    \begin{split}
        |E(H_s\cup H_t)|&=|E(H_s)|+|E(H_t)|\\
        &=(2|\overline{V(H_s)}|+m_s|V_0(H_s)|-1)+(2|\overline{V(H_t)}|+m_t|V_0(H_t)|-1)\\
        &=2|\overline{V(H_s\cup H_t)}|+2|\overline{V(H_s\cap H_t)}|-2+m_s|V_0(H_s)|+m_t|V_0(H_t)|\\
        &\geq2|\overline{V(H_s\cup H_t)}|,
    \end{split}
    \end{equation*}
    contradicting Proposition~\ref{lemma: there is no tight subgraph containing all three neighbours}. So, we may assume that at most one of $H_1+f_1,H_2+f_2,H_3+f_3$ is $S(k,j)$.  
    This implies that, for some $1\leq i\leq3$, $H_i+f_i$ is proper near-balanced (since none of the $H_i$ is a balanced blocker or a general-count blocker,  at most one of the $H_i+f_i$ is $S(k,j)$ and for at most one of the $H_i+f_i$ we have $\left<H_i+f_i\right>=\mathbb{Z}_2$.), and so $\alpha_i=2$. Without loss of generality, assume that $\alpha_3=2$. Let $H:=H_1\,\cup\,H_2\,\cup\,H_3$, and $H':=H_1\,\cap\,H_2\,\cap\,H_3$. Since $u\in V(H_s\cap H_t)$ for all $1\leq s\neq t\leq3$, we have
    \begin{equation*}
        \begin{split}
            |E(H)|&=\sum_{i=1}^3|E(H_i)|=2\sum_{i=1}^3|V(H_i)|-9+\sum_{i=1}^3\alpha_i\\
            &=2|V(H)|+2\sum_{1\leq s\neq t\leq3}[|V(H_s\cap H_t)|-|V(H')|]-7+\alpha_1+\alpha_2\\
            &\geq2|V(H)|-3+\alpha_1+\alpha_2.
        \end{split}
    \end{equation*}
    So, $\alpha_1+\alpha_2\leq2$, by Proposition~\ref{lemma: there is no tight subgraph containing all three neighbours} and the sparsity of $(G,\psi)$. If $H$ has a fixed vertex, then we have $|E(H)|\geq2|V(H)|-3+\alpha_1+\alpha_2=2|\overline{V(H)}|-1+\alpha_1+\alpha_2,$ and so $\alpha_1+\alpha_2=0$, by Proposition~\ref{lemma: there is no tight subgraph containing all three neighbours} and the sparsity of $(G,\psi)$. Since $H_1,H_2$ are not balanced blockers, the only case in which $\alpha_1=\alpha_2=0$ is when $H_1+f_1,H_2+f_2$ are both $S_0(k,j)$ and $|V_0(H_1)|=|V_0(H_2)|=1$. But this contradicts the fact that at most one of $H_1+f_1,H_2+f_2,H_3+f_3$ is $S(k,j)$. So, we may assume that $V_0(H)=\emptyset$. This implies that, for $i=1,2$, $\alpha_i\geq1$ with equality if and only if $\left<H_i+f_i\right>\simeq\mathbb{Z}_2$ and $j$ is odd. So, the only way of having $\alpha_1+\alpha_2\leq2$ is if $\alpha_1=\alpha_2=1$ and $\left<H_1+f_1\right>=\left<H_2+f_2\right>\simeq\mathbb{Z}_2$, with odd $j$. This contradicts the fact that at most one of $\left<H_1+f_1\right>,\left<H_2+f_2\right>,\left<H_3+f_3\right>$ is isomorphic to $\mathbb{Z}_2$. By contradiction, the result holds.
\end{proof}

\subsection{Applying a 1-reduction at a vertex with exactly two distinct neighbours}
\begin{proposition}
\label{pro: N(v)=2,v2 free}
For $k\geq4$, let $\Gamma=\left<\gamma\right>\simeq\mathbb{Z}_k$ through the isomorphism defined by letting $\gamma\mapsto1$.
For $2\leq j\leq k-2$, let $(G,\psi)$ be a $\mathbb{Z}_k^j$-gain tight $\Gamma$-gain graph with a free vertex $v$ of degree 3. Suppose that $v$ has no loop, and exactly two distinct neighbours $v_1,v_2$. Suppose that $v_1,v_2$ are free. Then there is an admissible 1-reduction at $v$.
\end{proposition}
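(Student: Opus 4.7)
The plan is to argue by contradiction, following the template of Proposition~\ref{thm:N(v)=1}. By Lemma~\ref{lemma: forests can have identity gains}, I gauge-fix so that the spanning-tree edges $e_1=(v,v_1)$ and $e_2=(v,v_2)$ carry gain $\mathrm{id}$, while the edge $e_1'=(v,v_1)$ parallel to $e_1$ has gain $g\neq\mathrm{id}$. The three $1$-reductions at $v$ then produce, respectively: (R1) a loop $f_1$ at $v_1$ of gain $g$, obtained by merging $e_1,e_1'$; (R2) an edge $f_2=(v_1,v_2)$ of gain $g^{-1}$, obtained by merging $e_1',e_2$; and (R3) an edge $f_3=(v_1,v_2)$ of gain $\mathrm{id}$, obtained by merging $e_1,e_2$. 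Assume for contradiction that each $(G_i,\psi_i)$ admits a blocker $H_i$, and set $\alpha_i:=\alpha_k^j(H_i+f_i)$, so $|E(H_i)|=2|V(H_i)|-3+\alpha_i$. Note $v_1\in V(H_i)$ for every $i$ and $v_2\in V(H_2)\cap V(H_3)$.

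I would first rule out general-count blockers: since $\mathbb{Z}_k^j$-gain sparsity implies that $(G,\psi)$ is $(2,0,0)$-sparse, Proposition~\ref{lemma: there is no tight subgraph containing all three neighbours} forbids any $(2,0,0)$-tight subgraph of $G-v$ from containing both neighbours of $v$. This excludes $H_2$ and $H_3$ from being general-count blockers; the case of $H_1$ is handled by the same proposition when $v_2\in V(H_1)$, and otherwise by a direct count exploiting that $H_1+f_1$ contains the nontrivial loop $f_1$.

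Next, I would show pairwise that $E(H_s\cap H_t)=\emptyset$. Whenever $E(H_s\cap H_t)\neq\emptyset$, Corollary~\ref{cor:H+f_1+f_2 bal. or near-bal.} forces $H_s\cup H_t+f_s+f_t$ to be proper near-balanced. For pairs $(1,2)$ and $(1,3)$, the loop $f_1$ at $v_1$ of gain $g$ pins the base vertex of the near-balancedness at $v_1$ with gain $g^{\pm1}$; combining this with the blocker structure of the second graph in the pair, via Propositions~\ref{union of balanced and [...] is [...]},~\ref{union of [...] and [...] is [...]} and~\ref{gcd proposition}, will contradict the $\mathbb{Z}_k^j$-gain sparsity of $(G,\psi)$. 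For the pair $(2,3)$, I would use Lemma~\ref{lemma:H+v=H+f_1+f_2} to identify $\langle H_2\cup H_3+f_2+f_3\rangle$ with $\langle H_2\cup H_3+v\rangle$, then apply Lemma~\ref{lemma: what i need for 1-red.} to deduce $|E(H_2\cup H_3)|=2|V(H_2\cup H_3)|-3+\alpha_k^j(H_2\cup H_3+v)$, which directly contradicts Proposition~\ref{lemma: there is no tight subgraph containing all three neighbours} applied to $G$.

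Once all pairwise intersections are edge-empty, the sets $E(H_1),E(H_2),E(H_3)$ are disjoint, so $|E(H_1\cup H_2\cup H_3)|=\sum_{i=1}^3(2|V(H_i)|-3+\alpha_i)$. Vertex inclusion--exclusion together with $v_1\in\bigcap_i V(H_i)$ and $v_2\in V(H_2)\cap V(H_3)$ converts this into a lower bound of the form $2|V(H_1\cup H_2\cup H_3)|-3+(\alpha_1+\alpha_2+\alpha_3)+(\text{nonnegative})$, yielding a subgraph of $G-v$ denser than Proposition~\ref{lemma: there is no tight subgraph containing all three neighbours} allows. I expect the main obstacle to be the pairwise subcases in the middle regime $\alpha_i\in\{1,2\}$: when $\alpha_i=1$ is forced by $\langle H_i+f_i\rangle\simeq\mathbb{Z}_2$ with $j$ odd, or when $\alpha_i=2$ comes from proper near-balancedness or from $S(k,j)$, tracking how the loop gain $g$ and the parallel $(v_1,v_2)$-edges interact with the gain groups of $H_i+f_i$ will require delicate bookkeeping via Propositions~\ref{union of [...] and [...] is [...]} and~\ref{gcd proposition}.
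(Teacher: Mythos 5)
Your skeleton matches the paper's: argue by contradiction with the same three $1$-reductions (identity edge, $g$-edge, loop at $v_1$), rule out general-count blockers, show the blockers are pairwise edge-disjoint, and finish with an inclusion--exclusion count. However, two of your key steps would fail as described. First, for the pair of non-loop reductions you claim that Lemma~\ref{lemma:H+v=H+f_1+f_2} plus Lemma~\ref{lemma: what i need for 1-red.} gives a direct contradiction. It does not: $\alpha_k^j$ is \emph{not} determined by the gain group alone, since a graph with $\left<X\right>\not\simeq\mathbb{Z}_2,\mathbb{Z}_3$ gets $\alpha_k^j(X)=2$ if it is proper near-balanced and $3$ otherwise. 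So from $\left<H+f_2+f_3\right>\simeq\left<H+v\right>$ you cannot conclude $\alpha_k^j(H+f_2+f_3)=\alpha_k^j(H+v)$ when $H+f_2+f_3$ is proper near-balanced; this surviving case is exactly the content of Corollary~\ref{cor:H+f_1+f_2 bal. or near-bal.}, and it must be killed separately. The paper does this by observing that the base vertices of near-balancedness are forced to be $v_1,v_2$ (else one could gauge both parallel $(v_1,v_2)$-edges to $\textrm{id}$), so every $v_1$--$v_2$ path has gain in $\{\textrm{id},g^{-1}\}$ and hence $H+v$ itself is near-balanced and $(2,1)$-tight, contradicting Proposition~\ref{lemma: there is no tight subgraph containing all three neighbours}. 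Without this transfer of near-balancedness from $H+f_s+f_t$ to $H+v$, the edge-disjointness step is incomplete.

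Second, the final count $|E(H)|\geq 2|V(H)|-3+\sum_i\alpha_i$ does not by itself produce ``a subgraph denser than Proposition~\ref{lemma: there is no tight subgraph containing all three neighbours} allows'': it only yields $\sum_i\alpha_i\leq 2$ (e.g.\ $|E(H)|=2|\overline{V(H)}|-1$ is perfectly sparse). The actual contradiction requires a case analysis on the surviving triples $(\alpha_1,\alpha_2,\alpha_3)$, in each case computing $\alpha_k^j(H+v)$ itself --- for instance, when both edge-blockers are balanced one must show via Proposition~\ref{union of [...] and [...] is [...]}(iii) that $H+v$ is proper near-balanced with base vertex $v_1$ and gain $g$, and in the remaining cases that $\left<H+v\right>\simeq\mathbb{Z}_2$ with $g=\gamma^{k/2}$, after which a further count on the union of the two $\mathbb{Z}_2$-blockers closes the argument. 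You flag this as ``delicate bookkeeping'' but it is the core of the proof, not a routine verification. A smaller issue: your proposed ``direct count'' excluding the loop-blocker from being a general-count blocker is unsupported, since that blocker need not contain $v_2$ and a $(2,0,0)$-tight subgraph of $G-v$ containing only $v_1$ violates nothing; the paper instead eliminates this possibility only at the end, via the bound $\sum_i\alpha_i\leq2$.
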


\begin{proof}
    Let $e_1,e'_1$ be the edges from $v$ to $v_1$, and $e_2$ be the edge from $v$ to $v_2$.
    By Lemma~\ref{lemma: forests can have identity gains}, we may assume that $\psi(e_1)=\psi(e_2)=\textrm{id}$ and, by the definition of gain graph, we know that $g:=\psi(e_1')\neq \textrm{id}$. 

    Let $(G_1,\psi_1),(G_2,\psi_2),(G_3,\psi_3)$ be obtained from $G-v$ by adding, respectively, the edges $f_1=(v_1,v_2)$ with gain $\textrm{id}$, the edge $f_2=(v_2,v_1)$ with gain $g$, and a loop $f_3$ at $v_1$ with gain $g$. Assume, by contradiction, that $H_1,H_2$ and $H_3$ are blockers for $(G_1,\psi_1),(G_2,\psi_2)$ and $(G_3,\psi_3)$, respectively. Let $H=H_1\cup H_2\cup H_3$ and $H'=H_1\cap H_2\cap H_3$. By Proposition~\ref{lemma: there is no tight subgraph containing all three neighbours}, $H_1,H_2$ are not general-count blockers. Moreover, $H_3$ is not a balanced blocker, since $H_3+f_3$ contains a loop. 
    
    We start by showing that $E(H_s\cap H_t)=\emptyset$ for all $1\leq s\neq t\leq3$. So, assume by contradiction that $E(H_s\cap H_t)\neq\emptyset$ for some $1\leq s\neq t\leq3$. By Corollary~\ref{cor:H+f_1+f_2 bal. or near-bal.}, $H_s\cup H_t+f_s+f_t$ is proper near-balanced. Moreover, by Lemma~\ref{lemma: what i need for 1-red.}, $H_s\cup H_t$ is $(2,1)$-tight. 
    
    In particular, if $s=1,t=2$, then the base-vertices of near-balancedness must be $v_1,v_2$: otherwise, there is a gain $\psi'$ equivalent to $\psi$ such that $\psi'(f_1)=\psi'(f_2)=\textrm{id}$, contradicting the definition of gain graph. This implies that every path $W$ from $v_1$ to $v_2$ in $H_1\cup H_2$ has gain $\textrm{id}$, or $g^{-1}$: $W$ must have gain in $\{\textrm{id},g,g^{-1}\}$ because $f_1\in E(H_1\cup H_2+f_1+f_2)$, and it cannot have gain $g$, because $f_2\in E(H_1\cup H_2+f_1+f_2)$. Then $H_1\cup H_2+v$ is also near-balanced. Since $H$ is $(2,1)$-tight, this contradicts Proposition~\ref{lemma: there is no tight subgraph containing all three neighbours}. Hence, $E(H_1\cap H_2)=\emptyset$, and one of $s,t$ is 3.

    Assume, without loss of generality, that $E(H_1\cap H_3)\neq\emptyset$, and recall that this implies that $H_1\cup H_3+f_1+f_3$ is proper near-balanced, and that $H_1\cup H_3$ is $(2,1)$-tight. By the sparsity of $(G,\psi)$, $H_1\cup H_3+f_1$ is also proper near-balanced. It follows that $H_1':=H_1\cup H_3$ is a blocker for $(G_1,\psi_1)$. If $E(H_2\cap H_3)\neq\emptyset$, then the same argument shows that $H_2':=H_2\cup H_3$ is a blocker for $(G_2,\psi_2)$. Since $E(H_1'\cap H_2')=E(H_3)\neq\emptyset$, $H_1'\cup H_2'+f_1+f_2$ is proper-near balanced, by Corollary~\ref{cor:H+f_1+f_2 bal. or near-bal.}, and $H'_1\cup H_2'$ is $(2,1)$-tight by Lemma~\ref{lemma: what i need for 1-red.}. Using a similar argument as in the previous paragraph, we can see that $H_1'\cup H_2'+v$ is proper near-balanced, contradicting Proposition~\ref{lemma: there is no tight subgraph containing all three neighbours}. Hence, $E(H_2\cap H_3)=\emptyset$. It follows that
    \begin{equation*}
            \begin{split}
                |E(H)|&=|E((H_1\cup H_3)\cup H_2)|=|E(H_1\cup H_3)|+|E(H_2)|=(2|V(H_1\cup H_3)|-1)+(2|V(H_2)|-3+\alpha_2)\\
                &=2|V(H)|+2|V((H_1\cup H_3)\cap H_2)|-4+\alpha_2\geq2|V(H)|+\alpha_2\geq2|V(H)|\geq2|\overline{V(H)}|,
            \end{split}
        \end{equation*}
        since $v_1,v_2\in V(H_1),V(H_2)$ and $\alpha_2\geq0$. This contradicts Proposition~\ref{lemma: there is no tight subgraph containing all three neighbours}. Hence, $E(H_s\cap H_t)=\emptyset$ for all $1\leq s\neq t\leq 3$. 

        Since $E(H_1\cap H_2)=\emptyset$, 
        \begin{equation*}
        \begin{split}
            |E(H_1\cup H_2)|&=|E(H_1)|+|E(H_2)|=(2|V(H_1)|-3+\alpha_1)+(2|V(H_2)|-3+\alpha_2)\\
            &=2|V(H_1\cup H_2)|+2|V(H_1\cap H_2)|-6+\alpha_1+\alpha_2.
        \end{split}
        \end{equation*}
        If $|V(H_1\cap H_2)|\geq3$, or if $|V(H_1\cap H_2)|=2$ and $V_0(H_1\cup H_2)\neq\emptyset$, this is at least $2|\overline{V(H_1\cup H_2)}|$, contradicting Proposition~\ref{lemma: there is no tight subgraph containing all three neighbours}. Hence, $H_1\cap H_2$ is composed of the two isolated vertices $v_1,v_2$, and $V_0(H_1)=V_0(H_2)=\emptyset$. So, $|E(H_1\cup H_2)|=2|V(H_1\cup H_2)|-2+\alpha_1+\alpha_2.$ Hence,
        \begin{equation}
        \label{equation for case 2a}
            \begin{split}
                |E(H)|&=|E(H_1\cup H_2)|+|E(H_3)|=(2|V(H_1\cup H_2)|-2+\alpha_1+\alpha_2)+(2|V(H_3)|-3+\alpha_3)\\
                &=2|V(H)|+2|V(H_1\cup H_2)\cap H_3|-5+\sum_{i=1}^3\alpha_i.
            \end{split}
        \end{equation}
        In particular, the intersection of $H_1\cup H_2$ and $H_3$ must indeed be the isolated vertex $v_3$. To see this, assume, by contradiction, that $|V(H_1\cup H_2)\cap H_3|\geq2$. Then $|E(H)|\geq2|V(H)|-1+\sum_{i=1}^3\alpha_i$.If $V_0(H)\neq\emptyset$, this is at least $2|\overline{V(H)}|+1$, contradicting the sparsity of $(G,\psi)$. If $V_0(H)=\emptyset$, then $\alpha_3\geq1$ (since $H_3+f_3$ is unbalanced), and so $|E(H)|\geq2|V(H)|=2|\overline{V(H)}|$, which contradicts Proposition~\ref{lemma: there is no tight subgraph containing all three neighbours}. So, $|V(H_1\cup H_2)\cap H_3|=1$ and
        \begin{equation}
        \label{edge set of union (2a)}
            |E(H)|=2|V(H)|-3+\sum_{i=1}^3\alpha_i.
        \end{equation}  
        Assume that $\alpha_1=\alpha_2=0$, so that $|E(H)|=2|V(H)|-3+\alpha_3$. Then, since all vertices of $H_1,H_2$ are free, $H_1,H_2$ are balanced blockers and, by Proposition~\ref{union of [...] and [...] is [...]}(iii), $H_1\cup H_2+f_1+f_2$ is near-balanced with base vertex $v_1$ (and with base vertex $v_2$). Since $H_1\cup H_2+f_1+f_2$ contains the 2-cycle $f_1,f_2$, it is near-balanced with gain $g$. So there is a gain $\psi'$ equivalent to $\psi$ such that $\psi'(e)\in\{\textrm{id},g,g^{-1}\}$ for all edges $e$ in $E(H_1\cup H_2)$ incident to $v_1$, and $\psi'(f)=\textrm{id}$ for all other edges $f\in E(H_1\cup H_2).$ In particular, $\left<H_1\cup H_2+f_1+f_2\right>=\left<g\right>$. Since $H_3+f_3$ contains the loop $f_3$ with gain $g$, it follows that $\left<H_1\cup H_2+f_1+f_2\right>\leq\left<H_3+f_3\right>$, and so $\left<H+f_1+f_2+f_3\right>\simeq\left<H_3+f_3\right>$. By Proposition~\ref{lemma: there is no tight subgraph containing all three neighbours} and Lemma~\ref{lemma:H+v=H+f_1+f_2}, $H_3+f_3$ must be proper near-balanced. Since it contains the loop $f_3$, it is near-balanced with base vertex $v_1$ and gain $g$. Recall that $H_1\cup H_2+f_1+f_2$ is also near-balanced with base vertex $v_1$ and gain $g$, so $H+f_1+f_2+f_3$ and $H+v$ are proper near-balanced with base vertex $v_1$ and gain $g$. But then $|E(H)|=2|V(H)|-3+\alpha_3=2|V(H)|-3+\alpha_k^j(H+f_1+f_2+f_3)$, which is a contradiction by Proposition~\ref{lemma: there is no tight subgraph containing all three neighbours}.
    
        Hence, $\alpha_1+\alpha_2\geq1$. In particular, $V_0(H)=\emptyset$, for otherwise, by Equation~\eqref{edge set of union (2a)}, $|E(H)|\geq2|\overline{V(H)}|$, which contradicts Proposition~\ref{lemma: there is no tight subgraph containing all three neighbours}. Since $H_3+f_3$ is not balanced, this implies that $\alpha_3\geq1$. Moreover, by Equation~\eqref{edge set of union (2a)} and Proposition~\ref{lemma: there is no tight subgraph containing all three neighbours}, $\sum_{i=1}^3\alpha_i\leq2$. So, $(\alpha_1,\alpha_2,\alpha_3)$ is one of $(0,1,1)$ and $(1,0,1)$. Without loss of generality, assume that $\alpha_1=0$, $\alpha_2=1$ and $\alpha_3=1$. By the definition of $\alpha_2,\alpha_3$, $j$ is odd and $\left<H_2+f_2\right>=\left<H_3+f_3\right>\simeq\mathbb{Z}_2$. Hence, $g=\gamma^{k/2}$ and each path from $v_1$ to $v_2$ in $H_2$ has gain $\textrm{id}$ or $g$. It follows that $\left<H_2\cup H_3+f_2+f_3\right>\simeq\mathbb{Z}_2$. However, 
        \begin{equation*}
        \begin{split}
            |E(H_2\cup H_3)|&=(2|V(H_2)|-2)+(2|V(H_3)|-2)\\
             &=2|V(H_2\cup H_3)|+2|V(H_2\cap H_3)|-4=2|V(H_2\cup H_3)|-2,
        \end{split}
        \end{equation*}
        contradicting Proposition~\ref{lemma: there is no tight subgraph containing all three neighbours} and Lemma~\ref{lemma:H+v=H+f_1+f_2}. Hence, the result holds.       
\end{proof}

\begin{proposition}
\label{pro: N(v)=2,v2 fixed}
    For $k\geq4$, let $\Gamma=\left<\gamma\right>\simeq\mathbb{Z}_k$ through the isomorphism defined by $\gamma\mapsto1$.
    For $2\leq j\leq k-2$, let $(G,\psi)$ be a $\mathbb{Z}_k^j$-gain tight $\Gamma$-gain graph with a free vertex $v$ of degree 3. Suppose that $v$ has no loop, and exactly two distinct neighbours $u,v_0$, of which only $u$ is free. Let $e_1,e_1'$ be the edges incident to $u$ and $v$, and let $e_2$ be the edge incident to $v_0$ and $v$. Suppose that either $j$ is even, or that the 2-cycle $e'_1e_1^{-1}$ does not have gain $\gamma^{k/2}$. Then there is an admissible 1-reduction at $v$.
\end{proposition}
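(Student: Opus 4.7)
The plan is to mimic the proof of Proposition~\ref{pro: N(v)=2,v2 free}, with adjustments for the fact that one neighbour of $v$ is fixed. By Lemma~\ref{lemma: forests can have identity gains} we may gauge so that $\psi(e_1)=\psi(e_2)=\mathrm{id}$, and write $g=\psi(e_1')\neq\mathrm{id}$. There are essentially only two distinct $1$-reductions at $v$: the two choices of removing $\{e_1,e_2\}$ or $\{e_1',e_2\}$ both add the same edge $(u,v_0)$ with gains $\mathrm{id}$ and $g^{-1}$ respectively, and these gain graphs are equivalent since the label on any edge incident to the fixed vertex $v_0$ may be freely redistributed. Call them $(G_1,\psi_1)$ (adding $f_1=(u,v_0)$) and $(G_2,\psi_2)$ (adding a loop $f_2$ at $u$ with gain $g$, from removing $\{e_1,e_1'\}$). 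Suppose for contradiction that both are inadmissible, with blockers $H_1$ and $H_2$; set $H=H_1\cup H_2$ and $\alpha_i=\alpha_k^j(H_i+f_i)$. Note $v_0\in V(H_1)\subseteq V(H)$ and $u\in V(H_1)\cap V(H_2)$.

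If $E(H_1\cap H_2)\neq\emptyset$, then since $N(v)=2\neq 3$, Corollary~\ref{cor:H+f_1+f_2 bal. or near-bal.} applies and forces $H+f_1+f_2$ to be proper near-balanced. This is a contradiction because $v_0\in V(H+f_1+f_2)$, while proper near-balancedness is only defined for graphs with no fixed vertex.

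If $E(H_1\cap H_2)=\emptyset$, then a routine expansion of $|E(H)|=|E(H_1)|+|E(H_2)|$ via the blocker equations and the identity $|\overline{V(H_1)}|+|\overline{V(H_2)}|=|\overline{V(H)}|+|\overline{V(H_1\cap H_2)}|$, combined with the bound $|E(H)|\leq 2|\overline{V(H)}|-1$ coming from Proposition~\ref{lemma: there is no tight subgraph containing all three neighbours} applied with $(m,l)=(0,0)$ (legitimate because $\mathbb{Z}_k^j$-gain sparsity implies $(2,0,0)$-sparsity), yields an immediate contradiction when $v_0\in V(H_2)$, and otherwise forces $V(H_1\cap H_2)=\{u\}$ and $\alpha_1+\alpha_2\leq 1$. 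Since $H_2+f_2$ is unbalanced (it contains a loop) with $V_0(H_2)=\emptyset$, we have $\alpha_2\geq 1$, so $\alpha_1=0$ and $\alpha_2=1$. By Definition~\ref{definition of sparsity, higher order rotation}, $\alpha_2=1$ with $V_0(H_2)=\emptyset$ forces $j$ odd and $\langle H_2+f_2\rangle\simeq\mathbb{Z}_2$; since $f_2$ is a loop of gain $g$, this gives $g=\gamma^{k/2}$, contradicting the hypothesis.

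The main obstacle is the bookkeeping in the empty-intersection case and identifying where the hypothesis on $\gamma^{k/2}$ enters. The crucial observation is that for an unbalanced graph with no fixed vertex, $\alpha_k^j=1$ can only occur when $j$ is odd and the gain group is $\mathbb{Z}_2$; in our situation this forces the loop $f_2$ to have order $2$, i.e.\ gain $\gamma^{k/2}$, which is exactly the forbidden configuration singled out by the exception in Theorem~\ref{theorem on 1-reductions}.
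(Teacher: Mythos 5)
Your main line is essentially the paper's: the same two candidate reductions (adding the edge $f_1=(u,v_0)$ versus adding a loop $f_2$ at $u$ of gain $g$), Corollary~\ref{cor:H+f_1+f_2 bal. or near-bal.} to dispose of the case $E(H_1\cap H_2)\neq\emptyset$ (proper near-balancedness is impossible since $H_1\cup H_2+f_1+f_2$ contains the fixed vertex $v_0$), and the count $|E(H)|=2|V(H)|+2|V(H_1\cap H_2)|-6+\alpha_1+\alpha_2$ played off against Proposition~\ref{lemma: there is no tight subgraph containing all three neighbours} otherwise. You invoke the hypothesis ``$j$ even or $g\neq\gamma^{k/2}$'' at the end, to rule out $\alpha_2=1$ a posteriori, whereas the paper uses it up front to conclude $\alpha_k^j(H_2+f_2)\geq 2$; both orderings are valid, and your identification of where the hypothesis enters is correct.

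The genuine gap is that you tacitly assume both $1$-reductions exist. If $(u,v_0)$ is already an edge of $G$, then adding $f_1=(u,v_0)$ creates parallel edges at the fixed vertex, which is forbidden by Definition~\ref{def:gaingraph}(2); so $(G_1,\psi_1)$ is not a gain graph, there is no blocker $H_1$, and your contradiction --- which in both branches is built from $H_1\cup H_2$ --- proves nothing. The paper treats this explicitly: it first shows $v_0\notin V(H_2)$ without reference to $H_1$ (if $v_0\in V(H_2)$ then $\left<H_2+v\right>=\left<H_2+f_2\right>$ and neither graph is near-balanced, so $\alpha_k^j(H_2+v)=\alpha_k^j(H_2+f_2)$ and $H_2+v$ violates the gain-sparsity of $(G,\psi)$); it then uses the hypothesis to get $\alpha_k^j(H_2+f_2)\geq 2$, hence $|E(H_2)|\geq 2|V(H_2)|-1$, and observes that if $(u,v_0)\in E(G)$ the subgraph $H_2+v_0$ together with that edge satisfies $|E|\geq 2|\overline{V}|$, contradicting Proposition~\ref{lemma: there is no tight subgraph containing all three neighbours}. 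Thus when $(u,v_0)\in E(G)$ the loop reduction alone is admissible. You need to add this case (or an equivalent argument deriving a contradiction from $H_2$ alone); with that supplied, the rest of your write-up is sound.
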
    

\begin{proof}
    Assume, without loss of generality, that $e_1,e_2,e_1'$ are directed from $v$ to $u$.  By Lemma~\ref{lemma: forests can have identity gains}, we may assume that $\psi(e_1)=\psi(e_2)=\textrm{id}$. Let $g=\psi(e_1')$. Let $(G_1,\psi_1),(G_2,\psi_2)$ be the graphs obtained from $G-v$ by adding, respectively, an edge $f_1=(u,v_0)$, and a loop $f_2$ at $u$ with gain $g$. Notice that, if there is already an edge $(v_1,v_2)\in E(G)$, $(G_1,\psi_1)$ is not a well-defined gain graph. 

    Assume that $H_2$ is a blocker for $(G_2,\psi_2)$ and, whenever $(u,v_0)\not\in E(G)$, assume that $H_1$ is a blocker for $(G_1,\psi_1)$. Since $H_2+f_2$ contains the loop $f_2$, $H_2$ is not a balanced blocker. Moreover, since $g\neq\gamma^{k/2}$ or $j$ is even, we cannot have $\left<H_2+f_2\right>\not\simeq\mathbb{Z}_2$ and $j$ is odd. So, if we show that $|V_0(H_2)|=0$, then $\alpha_k^j(H_2+f_2)\geq2$ by definition.

    Assume, by contradiction, that $v_0\in V(H_2)$. In particular, $H_2+f_2$ is not near-balanced, since $V_0(H_2)\neq\emptyset$. 
    Moreover, $\left<H_2+v\right>\simeq\left<H_2+f_2\right>$, since $v_0$ is fixed. Since $|V_0(H_2+v)|=|V_0(H_2+f_2)|$, it follows that $\alpha_k^j(H_2+v)=\alpha_k^j(H_2+f_2).$ But this contradicts Proposition~\ref{lemma: there is no tight subgraph containing all three neighbours}. Hence, $v_0\not\in V(H_2)$, and so $|V_0(H_2)|=\emptyset$. So, $\alpha_k^j(H_2+f_2)\geq2$ and $|E(H_2)|\geq2|V(H_2)|-1$. If $(u,v_0)\in E(G)$, then $|E(H_2+v_0)|=|E(H_2)|+1\geq2|V(H_2)|=2|\overline{V(H_2+v_0)}|$, which contradicts Proposition~\ref{lemma: there is no tight subgraph containing all three neighbours}. Hence, $(u,v_0)\not\in E(G)$, and $(G_1,\psi_1),H_1$ are well-defined. Let $H=H_1\cup H_2$ and $H'=H_1\cap H_2$. Notice that $H+f_1+f_2$ is neither balanced nor near-balanced, since it contains the loop $f_2$ and the fixed vertex $v_0$. Hence, by Corollary~\ref{cor:H+f_1+f_2 bal. or near-bal.}, $E(H')=\emptyset$. Then,
    \begin{equation*}
    \begin{split}
         |E(H)|&=(2|V(H_1)|-3+\alpha_k^j(H_1+f_1))+(2|V(H_2)|-3+\alpha_k^j(H_2+f_2))\\
         &=2|V(H)|+2|V(H')|-6+\alpha_k^j(H_1+f_1)+\alpha_k^j(H_2+f_2)\\
         &\geq2|V(H)|-4+\alpha_k^j(H_1+f_1)+\alpha_k^j(H_2+f_2)\\
         &\geq2|V(H)|-2=2|\overline{V(H)}|.
    \end{split}
    \end{equation*}
    
    This contradicts Proposition \ref{lemma: there is no tight subgraph containing all three neighbours}. Hence, there is an admissible 1-reduction at $v$. 
\end{proof}

\subsection{Applying a 1-reduction at a vertex with 3 distinct neighbours}
\begin{proposition}
    Let $\Gamma$ be a cyclic group of order $k\geq4$. For $2\leq j\leq k-2$, let $(G,\psi)$ be a $\mathbb{Z}_k^j$-gain tight $\Gamma$-gain graph with a free vertex $v$ of degree 3. Suppose that $v$ has no loop, and exactly three distinct neighbours $v_1,v_2,v_3$. Then there is an admissible 1-reduction at $v$.
\end{proposition}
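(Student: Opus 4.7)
Suppose for contradiction that no 1-reduction at $v$ is admissible. Applying Lemma~\ref{lemma: forests can have identity gains} to the star $\{e_1,e_2,e_3\}$ at $v$, I may assume $\psi(e_i)=\textrm{id}$ for each $i$, so the three candidate new edges $f_{12},f_{13},f_{23}$ all carry gain $\textrm{id}$. Each inadmissible 1-reduction then yields a blocker $H_{12},H_{13},H_{23}$, and I set $\alpha_{ij}:=\alpha_k^j(H_{ij}+f_{ij})$. Since $|V_0(G)|\leq 1$ there are two cases to split into: (A) none of $v_1,v_2,v_3$ is fixed, and (B) exactly one of them is fixed, which we may take to be $v_3=v_0$.

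The main device is Corollary~\ref{cor:H+f_1+f_2 bal. or near-bal.}, applied to any pair $(H_{ij},H_{kl})$ for which $f_{ij}$ and $f_{kl}$ do not share a fixed vertex. For such a pair, if $E(H_{ij}\cap H_{kl})\neq\emptyset$ then $H_{ij}\cup H_{kl}+f_{ij}+f_{kl}$ is proper near-balanced and, by Lemma~\ref{lemma: what i need for 1-red.}, $(2,1)$-tight. The subgraph $H_{ij}\cup H_{kl}+v\subseteq G$ then inherits proper near-balancedness: writing $v_s$ for the neighbour of $v$ lying in both blockers, contracting the identity-gain edge $e_s=(v,v_s)$ in $H_{ij}\cup H_{kl}+v$ yields exactly $H_{ij}\cup H_{kl}+f_{ij}+f_{kl}$ (the remaining two edges $e_t,e_u$ at $v$ become $f_{ij},f_{kl}$, still of gain $\textrm{id}$), and contraction along an identity-gain edge preserves both the gain group (consistent with Lemma~\ref{lemma:H+v=H+f_1+f_2}) and near-balancedness. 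Consequently $\alpha_k^j(H_{ij}\cup H_{kl}+v)\leq 2$, while $|E(H_{ij}\cup H_{kl}+v)|=(2|V(H_{ij}\cup H_{kl})|-1)+3=2|V(H_{ij}\cup H_{kl}+v)|$ violates the $\mathbb{Z}_k^j$-sparsity of $G$. Hence every such pair of blockers has empty edge intersection.

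In Case A the above applies to all three pairs and forces every pairwise edge intersection to be empty. Inclusion-exclusion on $H:=H_{12}\cup H_{13}\cup H_{23}$, together with $\sum|V(H_{ij})\cap V(H_{kl})|\geq 3$ (from the neighbours $v_1,v_2,v_3$), yields $|E(H)|\geq 2|V(H)|-3+\alpha_{12}+\alpha_{13}+\alpha_{23}$. If each $\alpha_{ij}\geq 1$ this makes $H\subseteq G-v$ a $(2,0)$-tight subgraph containing all three neighbours of $v$, contradicting Proposition~\ref{lemma: there is no tight subgraph containing all three neighbours}. In the residual subcases where some $\alpha_{ij}=0$ (so $H_{ij}$ is balanced, or $S_0(k,j)$ with a fixed vertex outside $\{v_1,v_2,v_3\}$), I would combine $H_{ij}$ with another blocker via Proposition~\ref{union of balanced and [...] is [...]} or Proposition~\ref{union of [...] and [...] is [...]} (in particular items (ii) and (iii)) to promote the union to proper near-balanced or $S(k,j)$, and then reapply the extension-through-$v$ argument of the previous paragraph. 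In Case B the Corollary handles the pairs $(H_{12},H_{13})$ and $(H_{12},H_{23})$, since their shared neighbours $v_1,v_2$ are free; the remaining pair $(H_{13},H_{23})$ shares the fixed vertex $v_0$, and I treat it directly along the lines of Proposition~\ref{pro: N(v)=2,v2 fixed}, showing that a non-empty edge intersection combined with the fixed vertex forces $|E(H_{13}\cup H_{23}+v)|$ to exceed the sparsity count.

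The main obstacle I expect is the intricate subcase analysis driven by the possible values of $\alpha_{ij}$ and by which of the blockers is balanced, proper near-balanced, or $S(k,j)$, running parallel to the arguments in Sections~\ref{sec:alpha<3} and~\ref{sec:alpha>0}. A secondary technical difficulty is the contraction argument that transfers near-balancedness from $H_{ij}\cup H_{kl}+f_{ij}+f_{kl}$ to $H_{ij}\cup H_{kl}+v$: one must verify, in parallel to Lemma~\ref{lemma:H+v=H+f_1+f_2}, that sub-walks through $v$ of the form $v_t\text{--}v\text{--}v_u$ (with $\{t,u\}=\{s\}^c$) can be accommodated by rerouting through the identity $f_{ij}f_{kl}^{-1}$ at $v_s$, so that near-balancedness is preserved despite the absence of the third virtual edge $f_{tu}$ in the contracted graph.
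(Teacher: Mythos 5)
Your overall strategy coincides with the paper's: reduce to identity gains on the star at $v$, assume all three 1-reductions are blocked, use Corollary~\ref{cor:H+f_1+f_2 bal. or near-bal.} together with Lemma~\ref{lemma: what i need for 1-red.} to rule out non-empty pairwise edge intersections of blockers, and then run an inclusion--exclusion count on $H_{12}\cup H_{13}\cup H_{23}$ and a case analysis on the $\alpha$-values. The step where $\sum\alpha_{ij}\geq 3$ forces a $(2,0,0)$-tight subgraph of $G-v$ containing all three neighbours is correct and matches the paper. However, two genuine gaps remain. First, the residual subcases with $\sum\alpha_{ij}\leq 2$ are not an afterthought: they are the bulk of the argument. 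One must actually compute $\left<H+v\right>$ (via Lemma~\ref{lemma:H+v=H+f_1+f_2}) in each configuration --- all blockers balanced with various vertex-intersection patterns, one or two blockers $S_0(k,j)$ through the fixed vertex, the $(1,1,0)$ pattern with $\left<H_i+f_i\right>\simeq\mathbb{Z}_2$, and the $(\alpha,0,0)$ pattern --- and verify in each that $\alpha_k^j(H+v)$ is small enough to contradict Proposition~\ref{lemma: there is no tight subgraph containing all three neighbours}. ``Promoting the union to proper near-balanced or $S(k,j)$ and reapplying the extension-through-$v$ argument'' does not substitute for these concatenation-of-walks computations, and in several subcases the conclusion is balancedness or $S_0(k,j)$ of $H+v$, not near-balancedness.

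Second, your proposed treatment of the pair $(H_{13},H_{23})$ whose virtual edges share the fixed vertex $v_3=v_0$ does not close. With $E(H_{13}\cap H_{23})\neq\emptyset$, Lemma~\ref{lemma: what i need for 1-red.} gives $|E(H')|=2|V(H')|-3+\alpha'$ for $H'=H_{13}\cup H_{23}$ with $\alpha'=\alpha_k^j(H'+f_{13}+f_{23})\in\{0,1\}$, and then $|E(H'+v)|=2|V(H'+v)|-2+\alpha'$ while $f_k^j(E(H'+v))=2|V(H'+v)|-3+\alpha_k^j(H'+v)$. The count is violated only if $\alpha_k^j(H'+v)\leq\alpha'$; but Lemma~\ref{lemma:H+v=H+f_1+f_2} explicitly excludes this configuration, and indeed $\left<H'+v\right>$ can be strictly larger than $\left<H'+f_{13}+f_{23}\right>$, since the path $v_1\,e_1^{-1}\,v\,e_2\,v_2$ creates closed walks avoiding $v_0$ that have no counterpart in $H'+f_{13}+f_{23}$ (where $v_1$ and $v_2$ are only joined through the fixed vertex). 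So $\alpha'=0$ with $\alpha_k^j(H'+v)=1$ is possible and yields no contradiction. The paper resolves this by bringing in the third blocker: with the other two pairwise edge intersections empty, $|E(H_{12}\cup H')|=|E(H_{12})|+|E(H')|\geq 2|V(H_{12}\cup H')|-2=2|\overline{V(H_{12}\cup H')}|$ (using that the two free neighbours lie in the vertex intersection and $v_0$ is fixed), which contradicts Proposition~\ref{lemma: there is no tight subgraph containing all three neighbours}. You need this, or an equivalent use of all three blockers, to finish Case B.
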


\begin{proof}
    For $i=1,2,3$, let $e_i=(v,v_i)$ be the edges incident with $v$. We may assume, by Lemma~\ref{lemma: forests can have identity gains}, that $\psi(e_i)=\textrm{id}$ for $1\leq i\leq 3$. Let $f_1=(v_1,v_2),f_2=(v_2,v_3)$ and $f_3=(v_3,v_1)$. For $1\leq i\leq3$, let $(G_i,\psi_i)$ be obtained by applying a 1-reduction at $v$, during which we add the edge $f_i$ with gain $\textrm{id}$ and assume that $(G_i,\psi_i)$ has a blocker $H_i$. Let $H=H_1\cup H_2\cup H_3$, and $H'=H_1\cap H_2\cap H_3$. We will consider the following cases separately: $E(H_s\cap H_t)=\emptyset$ for at most two pairs of $s,t$; and $E(H_s\cap H_t)=\emptyset$ for all pairs $s,t$. In both cases, we show that there is a contradiction.

    \medskip
    \textit{\textbf{Case 1: }$E(H_s\cap H_t)=\emptyset$ for at most two pairs $s,t$.}\\
    Without loss of generality, we may assume $E(H_1\cap H_2)\neq\emptyset$. By Corollary~\ref{cor:H+f_1+f_2 bal. or near-bal.}, either $H_1\cup H_2+f_1+f_2$ is proper near-balanced or $v_2$ is fixed. If $H_1\cup H_2+f_1+f_2$ is near-balanced, say with base vertex $u$, then so is $H_1\cup H_2+v$, since every walk which contains $u$, from $v_1$ to $v_2$, from $v_2$ to $v_3$, and from $v_3$ to $v_1$ must have gain $\textrm{id},g$ or $g^{-1}$, for some $g\in\Gamma$. However, by Lemma~\ref{lemma: what i need for 1-red.}, $H_1\cup H_2$ is $(2,1)$-tight, which contradicts Proposition~\ref{lemma: there is no tight subgraph containing all three neighbours}. 
    
    Hence, we may assume that $v_2$ is fixed, and so $v_1,v_3$ are free. By the same argument as in the previous paragraph, it is easy to see that $E(H_1\cap H_3)=E(H_2\cap H_3)=\emptyset$. Hence, by Lemma~\ref{lemma: what i need for 1-red.},
    \begin{equation*}
    \begin{split}
        |E(H)|&=|E((H_1\cup H_2)\cup H_3)|=|E(H_1\cup H_2)|+|E(H_3)|\\
        &=(2|V(H_1\cup H_2)|-3+\alpha_k^j(H_1\cup H_2+f_1+f_2))+(2|V(H_3)|-3+\alpha_k^j(H_3+f_3))\\
        &=2|V(H)|+2|V((H_1\cup H_2)\cap H_3)|-6+\alpha_k^j(H_1\cup H_2+f_1+f_2)+\alpha_k^j(H_3+f_3)\\
        &\geq2|V(H)|-2+\alpha_k^j(H_1\cup H_2+f_1+f_2)+\alpha_k^j(H_3+f_3)=2|\overline{V(H)}|,
    \end{split}
    \end{equation*}
    since $v_1,v_3\in V((H_1\cup H_2)\cap H_3)$. This contradicts Proposition~\ref{lemma: there is no tight subgraph containing all three neighbours}.

    \medskip
    \textit{\textbf{Case 2: }$E(H_s\cap H_t)=\emptyset$ for all pairs $s,t$.}\\
    For simplicity, let $\alpha_i:=\alpha_k^j(H_i+f_i)$ for $1\leq i\leq3$. We have
    \begin{equation}
    \label{equation for N(v)=3}
        \begin{split}
|E(H)|&=\sum_{i=1}^3|E(H_i)|=2\sum_{i=1}^3|V(H_i)|-9+\sum_{i=1}^3\alpha_i\\
&=2[|V(H)|+\sum_{1\leq s\neq t\leq3}|V(H_s\cap H_t)|-|V(H')|]-9+\sum_{i=1}^3\alpha_i\geq2|V(H)|-3+\sum_{i=1}^3\alpha_i.
           \end{split}
    \end{equation}
    By the sparsity of $(G,\psi)$ and Proposition \ref{lemma: there is no tight subgraph containing all three neighbours}, $0\leq\sum_{i=1}^3\alpha_i\leq2$. Moreover, $|V(H_s\cap H_t)|\geq2$ for at most one pair $1\leq s\neq t\leq3$. Otherwise, $\sum_{1\leq s\neq t\leq3}|V(H_s\cap H_t)|-|V(H')|\geq5$, and so $|E(H)|\geq2|V(H)|+1$, contradicting the sparsity of $(G,\psi)$.
             
    First, let $\sum_{i=1}^3\alpha_i=0$ so that $|E(H)|\geq2|V(H)|-3$. Then, for each $1\leq i\leq 3$, $H_i+f_i$ is either balanced or it is $S_0(k,j)$ with $|V_0(H_i)|=1$. First, assume that each $H_i$ is a balanced blocker. If $|V(H_s\cap H_t)|=1$ for all pairs $1\leq s\neq t\leq3$, then $H+f_1+f_2+f_3$ is balanced: each path in $H_1$ (respectively $H_2$ and $H_3$) between $v_1$ and $v_2$ (respectively $v_2$ and $v_3$, and $v_1$ and $v_3$) has gain $\textrm{id}$. So, $H+v$ is also balanced. Since $|E(H)|\geq2|V(H)|-3$, this contradicts Proposition~\ref{lemma: there is no tight subgraph containing all three neighbours}. So, without loss of generality, assume that $|V(H_1\cap H_2)|=2$, and $|V(H_1\cap H_3)|=|V(H_2\cap H_3)|=1$, so that $|E(H)|\geq2|V(H)|-1$. If $V_0(H)\neq\emptyset$, then $|E(H)|\geq2|\overline{V(H)}|+1$, contradicting the sparsity of $(G,\psi)$. So $V_0(H)=\emptyset$. By Proposition~\ref{union of [...] and [...] is [...]}(iii), $H_1\cup H_2+f_1+f_2$ is near-balanced with base vertex $v_2$. Since each path in $H_3$ from $v_1$ to $v_3$ has gain $\textrm{id}$, it follows that $H+f_1+f_2+f_3$ is near-balanced with base vertex $v_2$. So $H+v$ is also near-balanced with base vertex $v_2$. Since $|E(H)|\geq2|V(H)|-1$, this contradicts Proposition~\ref{lemma: there is no tight subgraph containing all three neighbours}. 
              
    Now, assume that $H_i+f_i$ is $S_0(k,j)$ with $|V_0(H_i)|=1$ for some $1\leq i\leq3$. Without loss of generality, let $H_1+f_1$ be $S_0(k,j)$. If $|V(H_s\cap H_t)|\geq2$ for some pair $1\leq s\neq t\leq3$, then $|E(H)|\geq2|V(H)|-1=2|\overline{V(H)}|+1$, contradicting the sparsity of $(G,\psi)$. So $|V(H_s\cap H_t)|=1$ for all pairs $1\leq s\neq t\leq3$. In particular, $H_1+f_1,H_2+f_2,H_3+f_3$ cannot all be $S_0(k,j)$: otherwise, they all share a fixed vertex and, since $v_1,v_2,v_3\not\in V(H'),$ $|V(H_s\cap H_t)|\geq2$ for all $1\leq s\neq t\leq3$. So, without loss of generality, consider the following cases separately: $H_1+f_1,H_2+f_2$ are $S_0(k,j)$ and $H_3$ is a balanced blocker; $H_1+f_1$ is $S_0(k,j)$, and $H_2,H_3$ are balanced blockers. 

    First, assume that $H_1+f_1,H_2+f_2$ are $S_0(k,j)$ and $H_3$ is a balanced blocker. Let $n_1,n_2\in S_0(k,j)$ be such that $\left<H_1+f_1\right>\simeq\mathbb{Z}_{n_1},\left<H_2+f_2\right>\simeq\mathbb{Z}_{n_2}$. Since $|V(H_1\cap H_2)|=1$ and $H_1,H_2$ share the fixed vertex, $v_2$ is the fixed vertex. So,
    \begin{equation*}
    \begin{split}
    \left<H+f_1+f_2+f_3\right>&=\left<\psi(W):W\text{ is a closed walk in }H+f_1+f_2+f_3 \text{ not containing }v_2\right>\\
    &=\left<\psi(W):W\text{ is a closed walk in }H_1 \text{ or } H_2\text{ not containing } v_2 \text{, or in }H_3+f_3\right>\simeq\mathbb{Z}_l,
    \end{split}
    \end{equation*}
    where $l=\textrm{lcm}(n_1,n_2)\in S_0(k,j).$ So $H+f_1+f_2+f_3$ is $S_0(k,j)$, which contradicts the sparsity of $(G,\psi)$ and Proposition~\ref{lemma: there is no tight subgraph containing all three neighbours}, since $|E(H)|\geq2|\overline{V(H)}|-1$.

    Now, let $\left<H_1+f_1\right>\simeq\mathbb{Z}_n$ for some $n\in S_0(k,j)$, and $H_2,H_3$ be balanced blockers. Then the gain of $H+f_1+f_2+f_3$ is composed of the gain of every closed walk in $H_i$ not containing the fixed vertex, for $1\leq i\leq 3$, and the gain of every walk obtained by concatenating a walk from $v_1$ to $v_2$ (in $H_1$), a walk from $v_2$ to $v_3$ (in $H_2$), and a walk from $v_3$ to $v_1$ (in $H_3$). Since every walk from $v_1$ to $v_2$ has gain in $\mathbb{Z}_n$ (since $f_1$ has identity gain),  and every closed walk in $H_1$ has gain in $\mathbb{Z}_n$ (since $H_1\subset H_1+f_1$), and every closed walk in $H_2,H_3$, as well as every walk from $v_2$ to $v_3$ and from $v_3$ to $v_1$ has gain $\textrm{id}$, $\left<H+f_1+f_2+f_3\right>\simeq\mathbb{Z}_n$. By Lemma~\ref{lemma:H+v=H+f_1+f_2}, $H+v$ is $S_0(K,j)$. Since $|E(H)|\geq2|\overline{V(H)}|-1$, this is a contradiction, by the sparsity of $(G,\psi)$ and Proposition~\ref{lemma: there is no tight subgraph containing all three neighbours}. 

     So, let the triple $(\alpha_1,\alpha_2,\alpha_3)$ be one of $(1,0,0),(2,0,0),(1,1,0).$ In particular, since $\sum_{i=1}^3\alpha_i\geq1$, $|V(H_s\cap H_t)|=1$ for all $1\leq s\neq t\leq3$. Otherwise, $\sum_{1\leq s\neq t\leq3}|V(H_s\cap H_t)|-|V(H')|\geq4$, and so, by Equation~\eqref{equation for N(v)=3}, $|E(H)|\geq2|V(H)|$, contradicting Proposition~\ref{lemma: there is no tight subgraph containing all three neighbours}. Moreover, if $|V_0(H)|=1$, then $|E(H)|\geq|\overline{V(H)}|$ by Equation~\eqref{equation for N(v)=3}. This contradicts Proposition~\ref{lemma: there is no tight subgraph containing all three neighbours}, so $|V_0(H)|=0$.

    If the $(\alpha_1,\alpha_2,\alpha_3)=(1,1,0)$, then $j$ is odd, $\left<H_1+f_1\right>=\left<H_2+f_2\right>\simeq\mathbb{Z}_2$, and $H_3$ is a balanced blocker. Since $|V(H_s\cap H_t)|=1$ for all $1\leq s\neq t\leq3$, the gain of $H+f_1+f_3+f_2$ is given by the gain of each closed walk in $H_1+f_1,H_2+f_2$ and $H_3+f_3$, and the gain of every walk obtained by concatenating a walk from $v_1$ to $v_2$ (in $H_1$), a walk from $v_2$ to $v_3$ (in $H_2$), and a walk from $v_3$ to $v_1$ (in $H_3$). So, $\left<H+v\right>=\left<H+f_1+f_2+f_3\right>\simeq\mathbb{Z}_2$. Since $|E(H)|\geq2|V(H)|-2$, this contradicts Proposition~\ref{lemma: there is no tight subgraph containing all three neighbours} and the sparsity of $(G,\psi)$.
    
    So assume that $(\alpha_2,\alpha_3)=(0,0)$. Then $H_2\cup H_3+f_2+f_3$ is balanced, since $H_2\cap H_3$ is the isolated vertex $v_3$. Hence, $\left<H+v\right>=\left<H+f_1+f_2+f_3\right>=\left<H_1+f_1\right>$. Moreover, it's easy to see that  $H+f_1+f_2+f_3$ (and hence also $H+v$) is near-balanced whenever $H_1+f_1$ is near-balanced. Since $|V_0(G)|=0$, this implies that $\alpha_j^k(H+v)=\alpha_1$. Since $|E(H)|=2|V(H)|-3+\alpha_1$, this contradicts Proposition~\ref{lemma: there is no tight subgraph containing all three neighbours} and the sparsity of $(G,\psi)$. By contradiction, there is an admissible $1$-reduction at $v$.     
\end{proof}

\section{Final combinatorial results}\label{sec:final}

In this section, we prove the final combinatorial result of this paper (see Theorem~\ref{final theorem for rotation}), which characterises the infinitesimal rigidity of $\mathcal{C}_k$-generic frameworks. 
Throughout this Section, we let $5\leq k<1000$, or $k=4,6$. Recall that the study of the infinitesimal rigidity of a $\mathcal{C}_k$-generic framework can be split into the study of its $\rho_j$-symmetric isostaticity, for $0\leq j\leq k-1$. Recall also that $\rho_0$-,$\rho_1$- and $\rho_{k-1}$-symmetric isostaticity were already studied in \cite{SchulzeLP2024}. We state the result here, as Theorem~\ref{theorem: final theorem for k-fold symmetry pt.1}. The proof of Theorem~\ref{theorem: final theorem for k-fold symmetry pt.1} invokes induction on the order of the $\Gamma$-gain graph $(G,\psi)$, the base cases of which are given in Figure~\ref{Base graphs for 3-fold rotation}.

\begin{theorem}[Theorem 7.13 in \cite{SchulzeLP2024}]
\label{theorem: final theorem for k-fold symmetry pt.1}
    Let $\Gamma$ be a cyclic group of order $k\geq4$, and $(\tilde G,\tilde p)$ be a $\mathcal{C}_k$-generic framework. Let $(G,\psi)$ be the $\Gamma$-gain graph of $\tilde G$. Then the following hold.
    \begin{itemize}
        \item $(\Tilde{G},\tilde{p})$ is fully-symmetrically isostatic if and only if $(G,\psi)$ is $(2,0,3,1)$-gain-tight.
        \item $(\Tilde{G},\tilde{p})$ is $\rho_j$-symmetrically isostatic for $j=1,k-1$ if and only if $(G,\psi)$ is $(2,1,3,1)$-gain tight.
    \end{itemize} 
\end{theorem}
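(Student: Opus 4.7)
The statement combines two counts, but both follow the same template: necessity is a direct consequence of the structure of the orbit rigidity matrix, while sufficiency proceeds by induction on $|V(G)|$ via admissible reductions.

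\textbf{Necessity.} For any subgraph $H$ of $G$, the rows of $O_j(G,\psi,p)$ indexed by $E(H)$ have rank bounded above by the codimension of the space of trivial $\rho_j$-symmetric motions of the $\Gamma$-lifting of $H$. For $j=0$, a fixed vertex $v_0$ must satisfy $C_k m(v_0)=m(v_0)$, forcing $m(v_0)=0$; hence fixed vertices contribute $0$ free variables, giving the ``$m=0$'' in the count, while a $\rho_0$-symmetric framework has a $1$-dimensional trivial motion space (rotation about the origin), giving $l=1$. For $j=1,k-1$, the relation $m(v_0)=\overline{\rho_j(\gamma)}\,C_k\,m(v_0)$ confines $m(v_0)$ to a $1$-dimensional eigenspace of the phase-rotated $C_k$, giving $m=1$ and (again) $l=1$. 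Finally, any balanced subgraph $H$ admits, by Lemma~\ref{lemma: forests can have identity gains} and the characterisation of balancedness, an equivalent labelling with all identity gains, so its $\Gamma$-lifting is $k$ disjoint copies of a generic framework on $|V(H)|$ vertices; applying the classical planar Laman count to a single copy yields $|E(H)|\le 2|V(H)|-3$, which is precisely the balanced $(2,3)$-sparsity condition.

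\textbf{Sufficiency.} Induct on $|V(G)|$. The base cases are the handful of minimal $(2,0,3,1)$- and $(2,1,3,1)$-gain-tight graphs (those in Figure~\ref{Base graphs for 3-fold rotation} together with analogues for general $k$), whose generic realisations one checks directly to be $\rho_j$-symmetrically isostatic by exhibiting a realisation with $O_j$ of full rank. For the inductive step, a standard averaging argument on the tightness identity yields either a free vertex of degree $\le 3$ or (when $V_0(G)\ne\emptyset$ and applicable) a vertex configuration admitting a $2$-vertex-reduction. Attempt reductions in a fixed order: $2$-vertex-reduction, $0$-reduction, loop-$1$-reduction, and $1$-reduction. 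Each reduction is easily seen to preserve the relevant gain-tightness count, so by the inductive hypothesis the reduced graph has a $\rho_j$-symmetrically isostatic generic realisation; Lemma~\ref{lemma:rank ext.} then promotes this to a $\rho_j$-symmetrically isostatic realisation of $(G,\psi)$, provided the gain conditions (C1)--(C3) there are honoured (which they are, since for $j=0,1,k-1$ no $S_0(k,j)$ restriction is active).

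\textbf{Main obstacle.} The hard step is showing that \emph{some} $1$-reduction at a degree-$3$ free vertex $v$ is admissible. Following the blocker framework of Sections~\ref{sec:blockers}--\ref{sec:reduct}, one assumes for contradiction that each of the possible $1$-reductions produces a blocker and derives a contradiction by analysing the union of two (or three) blockers via Proposition~\ref{lemma: there is no tight subgraph containing all three neighbours} and Lemma~\ref{lemma:H+v=H+f_1+f_2}. For $j=0,1,k-1$ this analysis is considerably lighter than the full argument of this paper: the penalty function $\alpha_k^j$ collapses to the balanced/unbalanced dichotomy, so only balanced blockers arise, and the union-of-blockers lemma reduces essentially to Proposition~\ref{union of balanced and [...] is [...]}(i). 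The remaining subtlety is the interaction with the (at most one) fixed vertex $v_0$: one must separately treat the cases where $v_0$ is a neighbour of $v$ or lies inside a blocker, invoking Lemma~\ref{there is no fixed cut vertex} to rule out degenerate configurations. Once these are disposed of, the induction closes.
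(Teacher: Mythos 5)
First, a point of reference: the paper does not prove this statement at all — it is imported as Theorem 7.13 of \cite{SchulzeLP2024}, and the present paper only records that the proof there is by induction on the order of the gain graph with the base cases of Figure~\ref{Base graphs for 3-fold rotation}. Your overall architecture (necessity from the column/kernel count of the orbit matrix plus the Laman count on liftings of balanced subgraphs; sufficiency by induction via $0$-, loop-$1$-, $1$- and $2$-vertex-reductions, closed up by Lemma~\ref{lemma:rank ext.}) matches that description, and your necessity half is essentially right.

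Two points in the sufficiency half are genuinely off. First, the claim that ``only balanced blockers arise'' for $j=0,1,k-1$ is false: a $1$-reduction adding $(v_1,v_2)$ is blocked by \emph{any} $(2,m,1)$-tight subgraph of $G-v$ containing $v_1$ and $v_2$, and such subgraphs are typically unbalanced. The union-of-blockers analysis therefore cannot ``reduce essentially to Proposition~\ref{union of balanced and [...] is [...]}(i)''; it must treat the balanced/balanced, balanced/unbalanced and unbalanced/unbalanced pairs separately, the last via the count showing that the union of two $(2,m,1)$-tight subgraphs meeting in at least one free vertex is again too dense or $(2,m,1)$-tight, so that a contradiction with Proposition~\ref{lemma: there is no tight subgraph containing all three neighbours} is reached only after all candidate edges at $v$ are examined. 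Second, your parenthetical that conditions (C1)--(C3) of Lemma~\ref{lemma:rank ext.} are honoured ``since no $S_0(k,j)$ restriction is active'' is too quick: (C3) outright forbids a loop-$1$-extension at the fixed vertex when $j=0$, so for the $(2,0,3,1)$ count you must instead observe that a free vertex carrying a loop and an edge to $v_0$ already violates $(2,0,1)$-sparsity and hence this reduction is never needed; and (C2) restricts loops of gain $\gamma^{k/2}$ when $k$ is even and $j\in\{1,k-1\}$ (both odd then), a case your sketch does not address even though such a loop is permitted by the $(2,1,3,1)$ count. Neither issue breaks the strategy, but both must be repaired before the induction closes.
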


      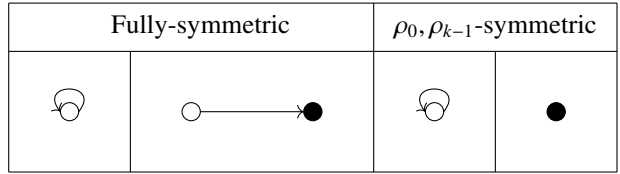
\begin{figure}[htp]
    \centering
    \begin{tikzpicture}
  [scale=.8,auto=left]

  \draw(-1,1.8) -- (9,1.8);
  \node at (2.15,1.4) {Fully-symmetric};
  \node at (7,1.4) {$\rho_0,\rho_{k-1}$-symmetric};
  \draw(-1,1) -- (9,1);
  \draw(-1,-1) -- (-1,1.8);
  
  \draw(0,0) circle (0.15cm);
  \draw[->] (0.15,0) .. controls (0.55,0.5) and (-0.55,0.5) .. (-0.15,0);

  \draw (1,-1) -- (1,1);

  \draw(2,0) circle (0.15cm);
  \draw[fill = black](4,0) circle (0.15cm);
  \draw[->] (2.15,0) -- (3.85,0);

  \draw (5,-1) -- (5,1.8);

  \draw(6,0) circle (0.15cm); 
  \draw[->] (6.15,0) .. controls (6.55,0.5) and (5.45,0.5) .. (5.85,0);

  \draw (7,-1) -- (7,1);

  \draw[fill = black](8,0) circle (0.15cm);

  \draw (9,-1) -- (9,1.8);
  \draw(-1,-1) -- (9,-1);
\end{tikzpicture}
    \caption{Base graphs for $k$-fold rotation for $\rho_0,\rho_1$ and $\rho_{k-1}$. All edges may be labelled freely, with the only restriction that loops must have non-identity gains.}
    \label{Base graphs for 3-fold rotation}
\end{figure} 

In a similar way, we use an inductive argument to prove the corresponding result for $2\leq j\leq k-2$. Namely, we will show that a $\mathcal{C}_k$-generic framework is $\rho_j$-symmetrically isostatic if and only if its underlying graph has a $\mathbb{Z}_k^j$-gain tight $\Gamma$-gain graph. Since our argument is inductive, we will be using the reduction moves described in Section~\ref{sec:red}, and so we first need to ensure that our $\mathbb{Z}_k^j$-gain graph has a vertex at which we may apply such moves.

\begin{lemma}[Lemma 7.1 in \cite{SchulzeLP2024}]
\label{lemma: there is a vertex of degree 2 or 3}
    Let $(G,\psi)$ be a $\Gamma$-gain graph with at least one free vertex. Let $s,t\in\mathbb{N}$ be the number of free vertices in $G$ of degree 2 and 3, respectively. Assume $(G,\psi)$ is $(2,0,0)$-tight. Then each free vertex of $G$ has degree at least 2. Moreover, if $G$ has a fixed vertex $v_0$, then $2s+t\geq\textrm{deg}(v_0)$.
\end{lemma}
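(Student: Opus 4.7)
The plan is to extract both conclusions from the $(2,0,0)$-tightness hypothesis via the handshake formula, using the sparsity condition itself to rule out low-degree free vertices.

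First I would establish the degree lower bound for free vertices. Since $(G,\psi)$ is $(2,0,0)$-tight, we have $|E(G)| = 2|\overline{V(G)}|$ and every subgraph $H$ with $E(H) \neq \emptyset$ satisfies $|E(H)| \leq 2|\overline{V(H)}|$. Suppose for contradiction that some free vertex $v$ has degree at most $1$, and consider $G - v$. Then $|\overline{V(G-v)}| = |\overline{V(G)}| - 1$, while $|E(G-v)| \geq |E(G)| - 1 = 2|\overline{V(G)}| - 1 = 2|\overline{V(G-v)}| + 1$. Since $G$ has at least one free vertex and is $(2,0,0)$-tight with $|E(G)| = 2|\overline{V(G)}| \geq 2$, the edge set of $G - v$ (or of a suitable subgraph containing the excess edges) is non-empty, contradicting $(2,0,0)$-sparsity. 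Hence every free vertex has degree at least $2$.

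Next I would use the handshake identity. Partition $\overline{V(G)}$ according to degree into the sets of vertices of degree exactly $2$, exactly $3$, and at least $4$, of sizes $s$, $t$, and $r$ respectively (recalling the convention that a loop contributes $2$ to the degree). Then
\begin{equation*}
\sum_{v \in V(G)} \deg(v) \;=\; 2|E(G)| \;=\; 4|\overline{V(G)}| \;=\; 4(s+t+r).
\end{equation*}
On the other hand, splitting the left-hand side,
\begin{equation*}
\sum_{v \in V(G)} \deg(v) \;=\; \deg(v_0) + 2s + 3t + \sum_{v \in V_{\geq 4}} \deg(v) \;\geq\; \deg(v_0) + 2s + 3t + 4r,
\end{equation*}
where we used that each free vertex of degree at least $4$ contributes at least $4$. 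Combining the two relations gives
\begin{equation*}
4(s+t+r) \;\geq\; \deg(v_0) + 2s + 3t + 4r,
\end{equation*}
which simplifies to $2s + t \geq \deg(v_0)$, as required. (If $V_0(G) = \emptyset$, the same counting with $\deg(v_0) = 0$ yields the trivial bound and only the first conclusion is meaningful.)

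There is no serious obstacle here: the argument is a short application of the sparsity count and a standard handshake estimate, and the main thing to be careful about is the loop convention (a loop adding $2$ to the degree, and the fact that the fixed vertex carries no loop by Definition~\ref{def:gaingraph}(ii), so $\deg(v_0)$ equals the number of edges at $v_0$).
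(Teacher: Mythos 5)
Your proof is correct and is essentially the argument one expects (and that the cited source gives): delete a low-degree free vertex to violate $(2,0,0)$-sparsity on the nonempty remainder, then apply the handshake identity $\sum_v \deg(v)=2|E(G)|=4|\overline{V(G)}|$ with the degree partition $s+t+r$ to extract $2s+t\geq\deg(v_0)$. Both steps, including the loop convention and the observation that $E(G-v)\neq\emptyset$, check out.
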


Moreover, the case where $V_0(G)=\emptyset$ was already shown in \cite[Theorem 7.1]{Ikeshita} for odd $k\leq1000$, and in \cite{KCandST20} for $k=4,6$. Here, we unite the results, and state them as Theorem~\ref{Ikeshita theorem}. The proofs of Theorem~\ref{Ikeshita theorem} also apply an inductive argument. The base cases are a combination of disjoint unions of certain base graphs, which may be grouped into three classes. The first class is composed of the graphs in Figure~\ref{figure: base graphs for the proof by induction on k-fold symmetry for anti-symmetric motions.}. The second class consists of all $\mathbb{Z}_k^j$-gain tight 4-regular graphs which may be obtained from an $S(k,j)$ $\mathbb{Z}_k$-gain graph by adding an edge. The third class consists of all $\mathbb{Z}_k^j$-gain tight 4-regular graphs (with $j$ odd) which can be obtained from a $\mathbb{Z}_k$-gain graph $G$ with $\left<G\right>\simeq\mathbb{Z}_2$ by adding two edges (see Section 6.2 of \cite{Ikeshita} for details). When a fixed vertex is present, we will see that we obtain exactly one additional  connected component of a base graph, which is the isolated fixed vertex.

\begin{theorem}[\cite{KCandST20} and  \cite{Ikeshita}]
\label{Ikeshita theorem} 
Let $\Gamma$ be a cyclic group of order $k\geq4$. Assume that either $5\leq k\leq1000$ is odd or $k=4,6$, and let $(\tilde{G},\tilde p)$ be a $\mathcal{C}_k$-generic framework with underlying $\Gamma$-symmetric graph $\tilde G$. Let $(G,\psi)$ be the $\Gamma$-gain graph of $\tilde G$, and assume that $V_0(G)=\emptyset$. For $2\leq j\leq k-2$, $(\tilde G,\tilde p)$ is $\rho_j$-symmetrically isostatic if and only if $(G,\psi)$ is $\mathbb{Z}_k^j$-gain tight.
\end{theorem}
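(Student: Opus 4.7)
The plan is to argue by induction on $|V(G)|$. The necessity direction is already supplied by Proposition~\ref{necessary conditions for higher rotation}, so the real task is sufficiency: assuming $(G,\psi)$ is $\mathbb{Z}_k^j$-gain tight, I must exhibit a $\mathcal{C}_k$-generic realisation of its $\Gamma$-lifting $\tilde G$ whose $j$-th block $\tilde R_j(\tilde G,\tilde p)$ has maximal rank, equivalently a $\mathcal{C}_k$-gain framework $(G,\psi,p)$ at which $O_j(G,\psi,p)$ has full row rank.

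First I would settle the base cases explicitly. These are the minimal $\mathbb{Z}_k^j$-gain tight graphs that cannot be further reduced: the configurations shown in Figure~\ref{figure: base graphs for the proof by induction on k-fold symmetry for anti-symmetric motions.}, together with the two families of $4$-regular gain graphs described immediately before the theorem statement, namely those obtained from an $S(k,j)$ gain graph by adding one edge, and (when $j$ is odd) those obtained from a $\mathbb{Z}_2$-gain graph by adding two edges. For each such base graph, I would place it at a convenient symmetric configuration and compute the rank of $O_j(G,\psi,p)$ directly, verifying that it attains maximum rank; this is a concrete linear-algebra check using the explicit entries of the orbit rigidity matrix and is where the arithmetic of $k$ and $j$ intervenes.

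For the inductive step, since $V_0(G)=\emptyset$, Lemma~\ref{lemma: there is a vertex of degree 2 or 3} produces a free vertex $v$ of degree $2$ or $3$. If $\deg(v)=2$, then depending on whether $v$ is incident to a loop or not I perform a loop-$1$-reduction or a $0$-reduction; both are always admissible by the remarks at the beginning of Section~\ref{sec:blockers}, so the reduced graph $(G',\psi')$ remains $\mathbb{Z}_k^j$-gain tight. The inductive hypothesis furnishes a $\rho_j$-symmetrically isostatic $\mathcal{C}_k$-gain framework on $(G',\psi')$, and Lemma~\ref{lemma:rank ext.} lifts this back to $(G,\psi)$ provided conditions (C2) and (C3) are arranged; since the gain on the new loop is still at our disposal and $V_0(G)=\emptyset$ neutralises (C3), both conditions are easy to secure. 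If $\deg(v)=3$, Theorem~\ref{theorem on 1-reductions} supplies an admissible $1$-reduction (the single excluded exceptional case requires $v$ to be adjacent to the fixed vertex, which does not occur here), and the same scheme applies, with genericity of the extended configuration giving condition (C1).

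The main obstacle is the base-case verification, particularly for the two $4$-regular families built from $S(k,j)$ graphs and from near-balanced $\mathbb{Z}_2$-gain graphs. Proving that the $j$-th block of the orbit rigidity matrix attains full row rank at some admissible realisation in these families requires an explicit rank computation whose feasibility is precisely what restricts the range of $k$ to odd $k<1000$ or $k\in\{4,6\}$: for these groups the required computations have been carried out in \cite{Ikeshita} and \cite{KCandST20}, while for even $k\geq 8$ the sparsity count is actually insufficient (as shown in Section~\ref{sec:even k}), so no analogous base-case verification could succeed by these methods alone.
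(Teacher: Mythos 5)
The paper does not actually prove this statement: Theorem~\ref{Ikeshita theorem} is imported wholesale from \cite{Ikeshita} (odd $k\leq 1000$) and \cite{KCandST20} ($k=4,6$), and the surrounding text only summarises the structure of those proofs. Your sketch reproduces exactly that structure --- necessity from the sparsity count, sufficiency by induction via $0$-, loop-$1$- and $1$-reductions, with the base cases (the graphs of Figure~\ref{figure: base graphs for the proof by induction on k-fold symmetry for anti-symmetric motions.} together with the two $4$-regular families) disposed of by explicit rank computations on the orbit matrices, which is precisely where the bound $k\leq 1000$ originates --- so at the level of strategy it is the same argument, and correctly identifies the computational bottleneck.

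Two details deserve correction. First, in the loop-$1$-reduction step the gain on the loop is \emph{not} at your disposal: the extension must reproduce the original $(G,\psi)$, so the loop gain is prescribed. Condition (C2) of Lemma~\ref{lemma:rank ext.} is instead \emph{forced} by $\mathbb{Z}_k^j$-gain sparsity: a loop of gain $\gamma^{k/2}$ with $j$ odd spans a one-vertex subgraph $X$ with $\left<X\right>\simeq\mathbb{Z}_2$, hence $\alpha_k^j(X)=1$ and $f_k^j(X)=2-3+1=0<1=|E(X)|$. Second, Lemma~\ref{lemma: there is a vertex of degree 2 or 3} only gives minimum degree $2$ under a $(2,0,0)$-tightness hypothesis; a $\mathbb{Z}_k^j$-gain tight graph may be disconnected and may have $4$-regular components, so the induction must be run component-by-component, with a vertex of degree $2$ or $3$ extracted from any component that is not one of the $4$-regular base graphs. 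Both points are routine to repair and do not affect the overall validity of the approach.
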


 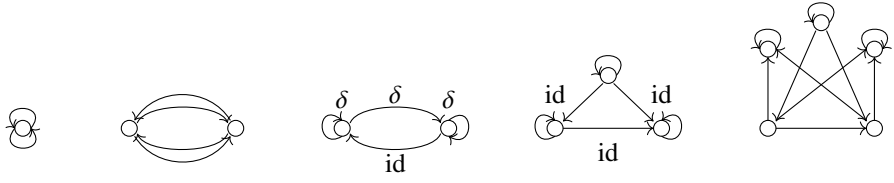
\begin{figure}[H]
    \centering
    \begin{tikzpicture}
  [scale=.7,auto=left]

  \draw(1,0) circle (0.15cm);  
  \draw[->] (1.15,0) .. controls (1.55,0.5) and (0.45,0.5) .. (0.85,0);
  \draw[->] (0.85,0) .. controls (0.45,-0.5) and (1.55,-0.5) .. (1.15,0);

  \draw(3,0) circle (0.15cm);
  \draw(5,0) circle (0.15cm);
  \draw[->] (3.15, 0.1) .. controls (3.3, 0.5) and (4.7, 0.5) .. (4.85, 0.1);
  \draw[->] (4.9, 0.15) .. controls (4.5, 0.8) and (3.5, 0.8) .. (3.1, 0.15);
  \draw[->] (4.85, -0.1) .. controls (4.7, -0.5) and (3.3, -0.5) .. (3.15, -0.1);
  \draw[->] (3.1, -0.15) .. controls (3.5, -0.8) and (4.5, -0.8) .. (4.9, -0.15);

  \draw(7,0) circle (0.15cm);
  \draw(9,0) circle (0.15cm);
  \node[above] at (7,0.2) {$\delta$};
  \node[above] at (9,0.2) {$\delta$};
  \node[above] at (8,0.3) {$\delta$};
  \node[below] at (8,-0.3) {$\textrm{id}$};
  \draw[->] (7.15, 0.1) .. controls (7.3, 0.5) and (8.7, 0.5) .. (8.85, 0.1);
  \draw[->] (8.85, -0.1) .. controls (8.7, -0.5) and (7.3, -0.5) .. (7.15, -0.1);
  \draw[->] (7,-0.15) .. controls (6.5,-0.45) and (6.5,0.55) .. (7,0.15);
  \draw[->] (9,0.15) .. controls (9.5,0.55) and (9.5,-0.45) .. (9,-0.15);

  \draw(11,0) circle (0.15cm);
  \draw(13,0) circle (0.15cm);
  \draw(12,1) circle (0.15cm);
  \node[below] at (12,-0.1) {$\textrm{id}$};
  \node[above] at (11,0.3) {$\textrm{id}$};
  \node[above] at (13,0.3) {$\textrm{id}$};
  \draw[->] (11.15,0) -- (12.85,0);
  \draw[->] (11.9,0.85) -- (11.15,0.13);
  \draw[->] (12.1,0.85) -- (12.85,0.13);
  \draw[->] (11,-0.15) .. controls (10.5,-0.45) and (10.5,0.55) .. (11,0.15);
  \draw[->] (13,-0.15) .. controls (13.5,-0.45) and (13.5,0.55) .. (13,0.15);
  \draw[->] (12.15,1) .. controls (12.55,1.5) and (11.45,1.5) .. (11.85,1);

  \draw(15,0) circle (0.15cm);
  \draw(17,0) circle (0.15cm);
  \draw(16,2) circle (0.15cm);
  \draw(15,1.5) circle (0.15cm);
  \draw(17,1.5) circle (0.15cm);
  \draw[->] (15.15,0) -- (16.85,0);
  \draw[->] (15.9,1.85) -- (15.15,0.13);
  \draw[->] (16.1,1.85) -- (16.85,0.13);
  \draw[->] (15,0.15) -- (15,1.35);
  \draw[->] (17,0.15) -- (17,1.35);
  \draw[->] (15.15,0.15) -- (16.85,1.5);
  \draw[->] (16.85,0.15) -- (15.15,1.5);
  \draw[->] (15.15,1.5) .. controls (15.55,2) and (14.45,2) .. (14.85,1.5);
  \draw[->] (16.85,1.5) .. controls (16.45,2) and (17.55,2) .. (17.15,1.5);
  \draw[->] (16.15,2) .. controls (16.55,2.5) and (15.45,2.5) .. (15.85,2);
\end{tikzpicture}
    \caption{Base graphs for $k$-fold rotation for $2\leq j\leq k-2$. All (unlabelled) edges of such graphs may be labelled freely, with the restrictions that loops must not have non-identity gains, the non-looped edges of the last graph are labelled $\text{id}$, and each graph must be $\mathbb{Z}_k^j$-gain tight.}
    \label{figure: base graphs for the proof by induction on k-fold symmetry for anti-symmetric motions.}
\end{figure}

The restriction $k\leq1000$ arises arises from the difficulty of computationally checking the rank of the corresponding orbit matrices for a growing list of base graphs. Both in \cite{Ikeshita} and in \cite{KCandST20}, it is  conjectured that this restriction may be dropped. For even $k\geq8$, there are  counterexamples to Theorem~\ref{Ikeshita theorem}, as we will see in Section~\ref{sec:even k}. Our final result relies on Theorem~\ref{Ikeshita theorem}. Hence, we must maintain all restrictions on $k$. For the cases where $k=4,6$, we need the following result.

\begin{lemma}
\label{lemma: there is an admissible reduction (rotation order 4,6)}
    For $k=4,6$, let $\Gamma=\left<\gamma\right>\simeq\mathbb{Z}_k$ through the isomorphism defined by letting $\gamma\mapsto1$. For $2\leq j\leq k-2$,  let
    $(G,\psi)$ be a $\mathbb{Z}_k^j$-gain tight $\Gamma$-gain graph with $V_0(G)=\{v_0\}$ and $|V(G)|\geq2$. Suppose that $\text{deg}(v_0)\geq1$. Then $(G,\psi)$ admits a reduction.
\end{lemma}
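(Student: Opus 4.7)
My plan is to reduce the statement to Theorem~\ref{theorem on 1-reductions} via Lemma~\ref{lemma: there is a vertex of degree 2 or 3}, and then separately handle the single exceptional case that Theorem~\ref{theorem on 1-reductions} allows.

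First, I would verify that $(G,\psi)$ being $\mathbb{Z}_k^j$-gain tight, together with the hypotheses $|V(G)|\geq 2$ and $\deg(v_0)\geq 1$, forces $\alpha_k^j(G)=1$, so that $|E(G)|=2|\overline{V(G)}|$ and hence $(G,\psi)$ is $(2,0,0)$-tight. This is a short case check of Definition~\ref{definition of sparsity, higher order rotation}, since the low-$\alpha$ branches (balanced, or $S_0(k,j)$ with $|V_0|=1$) are incompatible with the existence of an edge at $v_0$ combined with the sparsity of the subgraph containing it. Lemma~\ref{lemma: there is a vertex of degree 2 or 3} then yields $2s+t\geq \deg(v_0)\geq 1$, where $s,t$ denote the numbers of free vertices of degrees $2$ and $3$.

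If $s\geq 1$, pick such a free vertex $v$ and apply a $0$-reduction (when $v$ has no loop) or a loop-$1$-reduction (when it does); both are unconditionally admissible by the discussion preceding Definition~\ref{blocker defn for k-fold rotation}. Otherwise $s=0$ and $t\geq 1$, and I pick a free vertex $v$ of degree $3$ and try a $1$-reduction. Theorem~\ref{theorem on 1-reductions} gives an admissible $1$-reduction unless we are in its exceptional configuration, which requires $k$ even and $j$ odd. For $k=4$ one has $j=2$, and for $k=6$ with $j\in\{2,4\}$ the parity fails, so in all these subcases Theorem~\ref{theorem on 1-reductions} directly supplies an admissible $1$-reduction.

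The only remaining subcase is $k=6$ and $j=3$. If some degree-$3$ free vertex admits an admissible $1$-reduction, apply it. Otherwise every such $v$ lies in the exceptional configuration: its two neighbours are a free vertex $u$ and the fixed vertex $v_0$, and the $2$-cycle $v,u,v$ has gain $\gamma^3=\gamma^{k/2}$. The three candidate $1$-reductions at $v$ are (a) merging the two edges between $v$ and $u$ into a loop at $u$ of gain $\gamma^{k/2}$, and (b), (c) merging one such edge with $(v,v_0)$ into a new edge $(u,v_0)$. Reduction (a) is always inadmissible because, for $j$ odd, the single vertex carrying such a loop already violates $\mathbb{Z}_k^j$-sparsity. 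For (b) and (c) to both fail, we need either $(u,v_0)\in E(G)$ (otherwise one of them successfully adds this edge) or a blocker; a blocker analysis using Proposition~\ref{lemma: there is no tight subgraph containing all three neighbours}, the structure of the exceptional $2$-cycle, and $\alpha_k^j(G)=1$ then shows that $(u,v_0)\in E(G)$ and that $u$ must itself have degree $3$. Hence $\{v,u\}$ forms a $2$-vertex-reduction pair, and we apply that reduction.

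The main obstacle is this last step: controlling the structure of the partner $u$ of a bad vertex $v$. The delicate point is ruling out $\deg(u)\geq 4$ (with $u$ still connected to $v_0$) via a double count, using Lemma~\ref{lemma: there is a vertex of degree 2 or 3} to obtain $\deg(v_0)\leq t$, the observation that each bad vertex contributes exactly one of $v_0$'s edges, and the identity $|E(G)|=2|\overline{V(G)}|$, in order to force $\deg(u)=3$ and $(u,v_0)\in E(G)$, which is exactly the input required by the $2$-vertex-reduction.
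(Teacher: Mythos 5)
Your overall strategy tracks the paper's: find a low-degree free vertex via Lemma~\ref{lemma: there is a vertex of degree 2 or 3}, dispose of degree-$2$ and looped degree-$3$ vertices with $0$- and loop-$1$-reductions, invoke Theorem~\ref{theorem on 1-reductions} to settle every case except $k=6$, $j=3$ with all degree-$3$ free vertices in the exceptional configuration, and finish that case with a $2$-vertex-reduction. The genuine gap is in the last step. You claim that when the $1$-reduction at a bad vertex $v$ (with free neighbour $u$) is blocked, a \emph{local} blocker analysis forces $(u,v_0)\in E(G)$ and $\deg(u)=3$. It does not: all that Proposition~\ref{lemma: there is no tight subgraph containing all three neighbours} yields locally is that a blocker $H$ for $e=(u,v_0)$ must be $(2,3)$-tight with $H+e$ balanced or $S_0(6,3)$, and nothing about a single such $H$ contradicts $(u,v_0)\notin E(G)$. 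The paper's argument is necessarily global: assuming $(u_i,v_0)\notin E(G)$ for \emph{every} bad vertex $v_i$ and that every reduction adding $(u_i,v_0)$ has a blocker $H_i$, it shows each $H_i$ is $(2,3)$-tight, contains no other bad vertex, and that distinct $H_i,H_s$ meet exactly in $\{v_0\}$; the union of all the $H_i$ together with the $v_i$ is then a $(2,0,0)$-tight connected component of $G$, and a degree count inside one $H_i$ produces a free vertex of degree $2$ or $3$ distinct from all the $v_i$, contradicting the standing assumption. Note also that the paper's implication runs the other way from yours: it shows that \emph{if} $(u_i,v_0)$ is already an edge \emph{then} $u_i$ has degree $3$ (since $\deg(v_0)\le t$ forces every edge at $v_0$ to end at a bad vertex); it does not, and cannot, show that $(u_i,v_0)$ must be an edge.

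A secondary error: $\alpha_k^j(G)=1$ is not forced by the hypotheses. For example, with $k=6$, $j=2$, the connected gain graph on $\{v_0,u_1,u_2\}$ with the edge $(u_1,v_0)$ of gain $\mathrm{id}$ and a $2$-cycle between $u_1,u_2$ of gains $\mathrm{id}$ and $\gamma^3$ is $\mathbb{Z}_6^2$-gain tight with $\deg(v_0)=1$, but it is $S_0(6,2)$ with a fixed vertex, so $\alpha_6^2(G)=0$ and $|E(G)|=2|\overline{V(G)}|-1$; in particular $G$ is not $(2,0,0)$-tight. The consequence you actually need at this stage (existence of a free vertex of degree $2$ or $3$) survives via the bound $2|E(G)|\le 4|\overline{V(G)}|$, but the identity $|E(G)|=2|\overline{V(G)}|$ cannot be taken for granted as an ingredient of your concluding double count; it has to be established for the relevant component in the $k=6$, $j=3$ case, which is exactly what the paper's union-of-blockers argument does.
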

\begin{proof}
By Lemma \ref{lemma: there is a vertex of degree 2 or 3}, there is a free vertex in $V(G)$ of degree 2 or 3. We may assume that $G$ has no free vertex of degree 2 and no free vertex of degree 3 with a loop. Otherwise, we may apply a 0-reduction or loop-1-reduction to $(G,\psi)$. Further, we may assume that 
$k$ is even, $j$ is odd, and for all free vertices $v$ of degree 3, $v$ has exactly 2 distinct neighbours, one of which is $v_0$, and the 2-cycle $v$ forms with its free neighbour has gain $\gamma^{k/2}$ (see Figure~\ref{degree 2 vertex problem image pt. 2}). Otherwise, we may apply a 1-reduction to $(G,\psi)$, by Theorem \ref{theorem on 1-reductions}. Notice that, since $k$ is even and $j$ is odd, we must have $k=6$ and $j=3$. 

Let $v_1,\dots,v_t$ be the free vertices of degree 3 in $G$. For $1\leq i\leq t$ let $u_i$ be the free neighbour of $v_i$, and $e_i:=(u_i,v_0)$. By Lemma \ref{lemma: there is a vertex of degree 2 or 3}, $\textrm{deg}(v_0)\leq t$. So, if the edge $e_i$ is present for some $1\leq i\leq t,$ then $u_i$ must be a vertex of degree 3. Hence, we can apply a 2-vertex reduction at $u_i,v_i$. So, we may assume that $e_i\not\in E(G)$ for all $1\leq i\leq t$.

For $1\leq i\leq t,$ let $(G_i,\psi_i)$ be obtained from $(G,\psi)$ by removing $v_i$ and adding $e_i$ with gain $\textrm{id}$. We will show that, for some $1\leq i\leq t$, $(G_i,\psi_i)$ is an admissible 1-reduction. Assume, by contradiction, that for all $1\leq i\leq t$ there is a blocker $H_i$ for $(G_i,\psi_i)$. If there is some $1\leq i\leq t$ such that $\alpha_k^j(H_i+e_i)\geq1$, then
\begin{equation*}
    |E(H_i)|=2|V(H_i)|-3+\alpha_k^j(H_i+e_i)=2|\overline{V(H_i)}|-1+\alpha_k^j(H_i+e_i)\geq2|\overline{V(H_i)}|,
\end{equation*}
since $H_i$ contains the fixed vertex $v_0$. This contradicts Proposition~\ref{lemma: there is no tight subgraph containing all three neighbours}, so for all $1\leq i\leq t$, $\alpha_k^j(H_i+e_i)=0$ and 
$H_i$ is $(2,3)$-tight. By the definition of $\alpha_k^j$, it follows that, for all $1\leq i\leq t$, $H_i+e_i$ is either balanced or $S_0(6,3)$. Since $S_0(6,3)=\{3\}$, it follows that $\left<H_i+e_i\right>$ is either $\left\{\text{id}\right\}$ or $\{\text{id},\gamma^2,\gamma^4\}$.

Moreover, for each $1\leq i\neq s\leq t$, $v_s\not\in V(H_i)$. To see this, suppose, by contradiction, that $v_s\in V(H_i)$. 
Since $\left<H_i+e_i\right>$ is either $\left\{\text{id}\right\}$ or $\{\text{id},\gamma^2,\gamma^4\}$, it cannot contain the 2-cycle $(v_s,u_s)(u_s,v_s)$ of gain $\gamma^3$. Hence, there is an edge $e$ incident to $v_s,u_s$ such that $e\not\in E(H_i)$. It is easy to see that, since $H_i$ i $(2,3)$-tight, all of its vertices have degree 2 in $H_i$ (see, for instance, the proof of Lemma 7.1(i) in \cite{SchulzeLP2024}). In particular, $u_s$ has degree 2 in $H_i$, so two edges incident to $u_s$ lie in $H_i$. Then, $|E(H_i+e)|=|E(H_i)|+1=2|V(H_i)|-2=2|\overline{V(H_i+e)}|$, since $v_0\in V(H_i)$. This contradicts Proposition~\ref{lemma: there is no tight subgraph containing all three neighbours}, so $v_s\not\in V(H_i)$ for all $1\leq i\neq s\leq t$.

\medskip
\textit{\textbf{Claim: }$E(H_i\cap H_s)=\emptyset$ and $V(H_i\cap H_s)=\{v_0\}$ for all $1\leq i\neq s\leq t$}.

\medskip

\textit{Proof. }Choose some $1\leq i\neq s\leq t$. First, assume by contradiction that $E(H_i\cap H_s)\neq\emptyset$. By the proof of Lemmas~\ref{lemma:bal.int.} and~\ref{lemma:alpha>1}, we can see that $|E(H_i\cup H_s)|=2|V(H_i\cup H_s)|-3=2|\overline{V(H_i\cup H_s)}|-1$. But then, 
\begin{equation*}
    |E(H_i\cup H_s+v_i+v_s)|=|E(H_i\cup H_s)|+6=2|\overline{V(H_i\cup H_s)}|+5=2|\overline{V(H_i\cup H_s+v_i+v_s)}|+1,
\end{equation*}
contradicting the sparsity of $(G,\psi)$. So $E(H_i\cap H_s)=\emptyset$ for all $1\leq i\neq s\leq t$. 

Now, if $V(H_i\cap H_s)\neq\{v_0\}$, then $H_i\cap H_s$ contains a free vertex, and so $|E(H_i\cup H_s)|=|E(H_i)|+|E(H_s)|=2|\overline{V(H_i\cup H_s)}|+2|\overline{V(H_i\cap H_s)}|-2\geq2|\overline{V(H_i\cup H_s)}|.$ This contradicts Proposition~\ref{lemma: there is no tight subgraph containing all three neighbours}, so $V(H_i\cap H_s)=\{v_0\}$. Since $i,s$ were arbitrary, the claim holds. $\square$

\medskip
Let $H:=\bigcup_{i=1}^tH_i$. By the Claim,
\begin{equation*}
        |E(H)|=\sum_{i=1}^t|E(H_i)|
        =2\sum_{i=1}^t|V(H_i)|-3t
        =2(|V(H)|+(t-1))-3t
        =2|V(H)|-t-2.
\end{equation*}
So, $H':=H+v_1+\dots+v_t$ satisfies $|E(H')|=2|\overline{V(H'}|.$ This implies that there is no edge $e\in E(G)\setminus E(H')$ that joins two vertices in $V(H')$ and $H'$ is $(2,0,0)$-tight.

Next, we show that $H'$ is a connected component of $G$. Clearly, $H'$ is connected. Suppose $G$ has a non-empty subgraph $G'$ such that $V(G)$ is the disjoint union of $V(H')$ and $V(G')$. Let $d(H',G')$ be the number of edges joining a vertex in $H'$ with one in $G'$. We aim to show that $d(H',G')=0$. Let $\alpha\geq0$ be such that $|E(G')|=2|V(G')|-\alpha=2|\overline{V(G')}|-\alpha$. Then,
\begin{equation*}
\begin{split}
    |E(G)|&=|E(H')|+|E(G')|+d(H',G')=2|\overline{V(H')}|+2|V(G')|-\alpha+d(H',G')\\
    &=2|\overline{V(G)}|-\alpha+d(H',G')=|E(G)|-\alpha+d(H',G'),
\end{split}
\end{equation*}
so $\alpha=d(H',G').$ Since every vertex in $G'$ has degree at least $4$ in $G$, $4|V(G')|\leq \sum_{v\in V(G')} deg_G(v)=2|E(G')|+d(H',G')=4|V(G')|-2\alpha+\alpha=4|V(G')|-\alpha,$ and so $d(H',G')=\alpha=0,$ as required. 

\medskip
Finally, consider $H_1$ and let $n,m$ be the vertices of degree 2 and 3 in $H_1$, respectively. Let $\hat{\rho},\rho_{\text{min}}$ be the average degree and minimum attainable degree of $H_1$, respectively. Since $H_1$ is $(2,3)$-tight, $|V(H_1)|\hat{\rho}=4|V(H_1)|-6$. Moreover, $\rho_{\text{min}}$ is attained when all vertices of $H_1$ have degree 2,3 or 4, and hence $|V(H_1)|\rho_{\text{min}}=4|V(H_1)|-2n-m.$ Since $\rho_{\text{min}}\leq\hat{\rho}$, we have $2n+m\geq6.$ Hence, there are at least three vertices of degree 2 or 3 in $H_1$. If two of the vertices are $v_0,v_1$, there is still a free vertex $w$ in $H_1$ of degree 2 or 3. Since $H_1$ is a connected component of $G$, it follows that $w$ has degree 2 or 3 in $G$. But this contradicts our assumption that the only free vertices of degree 2 or 3 in $G$ are $v_1,\dots,v_t$. Hence, our result holds by contradiction.
\end{proof}

We now prove the main result of this paper.

\begin{theorem}
\label{theorem: final theorem for k-fold symmetry pt.2}
    For $k\geq4$, let $\Gamma=\left<\gamma\right>\simeq\mathbb{Z}_k$ through the isomorphism defined by letting $\gamma\mapsto1$.
    Assume that either $5\leq k\leq1000$ is odd or $k=4,6$, and let $(\tilde{G},\tilde p)$ be a $\mathcal{C}_k$-generic framework with underlying $\Gamma$-symmetric graph $\tilde G$. Let $(G,\psi)$ be the $\Gamma$-gain graph of $\tilde G$. For $2\leq j\leq k-2$, $(\tilde G,\tilde p)$ is $\rho_j$-symmetrically isostatic if and only if $(G,\psi)$ is $\mathbb{Z}_k^j$-gain tight.
\end{theorem}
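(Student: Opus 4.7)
The necessity direction is exactly Proposition~\ref{necessary conditions for higher rotation}, so the whole task reduces to proving sufficiency: every $\mathbb{Z}_k^j$-gain tight $\Gamma$-gain graph $(G,\psi)$ admits some $\rho_j$-symmetrically isostatic $\mathcal{C}_k$-gain realisation. By the definition of $\mathcal{C}_k$-genericity this will then transfer to every $\mathcal{C}_k$-generic realisation $(\tilde G,\tilde p)$ of $\tilde G$. I would prove this by induction on $|V(G)|$. The base cases consist of the graphs in Figure~\ref{figure: base graphs for the proof by induction on k-fold symmetry for anti-symmetric motions.}, possibly augmented with a disjoint isolated fixed vertex when $V_0(G)\neq\emptyset$. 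For these one checks directly, via the orbit rigidity matrix $O_j$, that generic realisations are $\rho_j$-symmetrically isostatic; this is either a finite computation (as in \cite{Ikeshita,KCandST20} for the free case) or, when an isolated $v_0$ is present, follows because such a vertex contributes no rows or columns to $O_j$ for $2\leq j\leq k-2$.

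For the inductive step, first assume $V_0(G)=\emptyset$: then Theorem~\ref{Ikeshita theorem} gives the result immediately. Next, if $V_0(G)=\{v_0\}$ and $v_0$ is isolated in $G$, then $G-v_0$ is $\mathbb{Z}_k^j$-gain tight with $V_0(G-v_0)=\emptyset$, so Theorem~\ref{Ikeshita theorem} applies to $G-v_0$, and $v_0$ is added back at the origin without altering $O_j$. The core of the argument is therefore the remaining case: $V_0(G)=\{v_0\}$ and $v_0$ has positive degree.

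In this core case I would locate a free vertex of small degree via Lemma~\ref{lemma: there is a vertex of degree 2 or 3} and perform an admissible reduction. A free vertex of degree 2 admits a 0-reduction, and a free vertex of degree 3 with a loop admits a loop-1-reduction; both manifestly preserve $\mathbb{Z}_k^j$-gain tightness. A free vertex of degree 3 without a loop admits an admissible 1-reduction by Theorem~\ref{theorem on 1-reductions}, \emph{except} in the single exceptional configuration of Figure~\ref{degree 2 vertex problem image pt. 2}, which requires $k$ even and $j$ odd. For odd $k$ this exceptional case simply does not arise, so the inductive reduction always exists. For $k=4$ or $k=6$, if the exceptional configuration is present, Lemma~\ref{lemma: there is an admissible reduction (rotation order 4,6)} guarantees that some admissible reduction (possibly a 2-vertex-reduction) is available. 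In every branch, this produces a strictly smaller $\mathbb{Z}_k^j$-gain tight $\Gamma$-gain graph $(G',\psi')$ to which the induction hypothesis applies, yielding a $\rho_j$-symmetrically isostatic realisation $(G',\psi',p')$.

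Finally, Lemma~\ref{lemma:rank ext.} is used to perform the inverse extension and produce a $\rho_j$-symmetrically isostatic realisation of $(G,\psi)$. The main obstacle lies here: one must verify that the genericity conditions (C1)--(C3) in Lemma~\ref{lemma:rank ext.} can be arranged. Condition (C1), the non-collinearity of the three image points in a 1-extension, is a Zariski-open condition and can be secured by a small perturbation of $p'$ followed by averaging over $\mathcal{C}_k$ to restore symmetry. Conditions (C2) and (C3) place restrictions on the gain of the new loop in a loop-1-extension; these are governed by the gain labels chosen during the reduction, and the $\mathbb{Z}_k^j$-gain sparsity of $(G,\psi)$ together with Definition~\ref{definition of sparsity, higher order rotation} already rules out the offending gain assignments (a loop at $v_0$ with gain generating a subgroup in $S_0(k,j)$ would force $\alpha_k^j$ to drop by 2, violating tightness). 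For the 2-vertex-extension used in the $k\in\{4,6\}$ exceptional case, the corresponding rank-preservation statement is Lemma~6.14 of \cite{SchulzeLP2024}. Combining these ingredients closes the induction and gives the theorem.
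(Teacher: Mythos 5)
Your proposal follows essentially the same route as the paper's proof: induction on $|V(G)|$ with the base graphs of Figure~\ref{figure: base graphs for the proof by induction on k-fold symmetry for anti-symmetric motions.} (plus an isolated fixed vertex), reduction to Theorem~\ref{Ikeshita theorem} when the fixed vertex is absent or isolated, locating a low-degree free vertex via Lemma~\ref{lemma: there is a vertex of degree 2 or 3}, applying Theorem~\ref{theorem on 1-reductions} (with Lemma~\ref{lemma: there is an admissible reduction (rotation order 4,6)} handling the exceptional configuration for $k=4,6$), and lifting back with Lemma~\ref{lemma:rank ext.} after verifying (C1)--(C3). The details you give for securing the genericity and gain conditions match the paper's argument, so the proposal is correct and not materially different.
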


\begin{proof}
    We use induction on $|V(G)|$. If $V(G)=V_0(G)=\{v_0\}$, then $(G,\psi)$ is an isolated fixed vertex, and so it is easy to see that $(\tilde G,\tilde p)$ is $\rho_j$-symmetrically isostatic. The $\Gamma$-liftings of the graphs in Figure~\ref{figure: base graphs for the proof by induction on k-fold symmetry for anti-symmetric motions.} were shown to have $\rho_j$-symmetrically isostatic realisations in \cite{Ikeshita}. The base cases of our induction argument are exactly the disjoint combinations of the base graphs given in \cite{Ikeshita} (see the paragraph after Lemma~\ref{lemma: there is a vertex of degree 2 or 3}), and of the isolated fixed vertex.
    
    We may assume that $\overline{V(G)}\neq\emptyset$ (since otherwise we obtain a base graph). Assume further that the statement is true for all graphs on at most $t$ vertices, for some integer $t\geq1$, that $|V(G)|=t+1$, and that $G$ is not a base graph. 
    
    If $V_0(G)=\emptyset$, or if $V(G)$ has an isolated fixed vertex, then the graph $(G',\psi')$ obtained from $(G,\psi)$ by removing  its fixed vertex (if it has one), is $\mathbb{Z}_k^j$-gain tight. By Theorem~\ref{Ikeshita theorem}, $(\tilde{G'},\tilde{p}|_{V(G')})$ is $\rho_j$-symmetrically isostatic. Since $O_j(G,\psi,p)=O_j(G',\psi',p|_{V(G')})$, $(\tilde{G},\tilde{p})$ is also $\rho_j$-symmetrically isostatic. So, we may assume that $G$ has a connected component $H$ which contains a fixed vertex, and which is not a base graph. Hence, the fixed vertex has degree at least $1$. 

    If $|\overline{V(G)}|=1$, then $V(G)=\{v_0,v\}$, where $v_0$ is a fixed vertex and $v$ is free, and $E(G)$ is composed of a loop $e$ at $v$, and an edge between $v$ and $v_0$. Since $(G,\psi)$ is $\mathbb{Z}_k^j$-gain tight, if $k=6$ and $j=3$, then $e$ does not have gain $\gamma^{k/2}$. Moreover, $G$ is not $S_0(k,j)$. We may apply a loop-1-reduction at $v$ to obtain a $\mathbb{Z}_k^j$-gain tight graph $(G',\psi')$ on $t$ vertices. By the inductive hypothesis, every $\mathcal{C}_k$-generic realisation of $\tilde{G'}$ is $\rho_j$-symmetrically isostatic. Let $(\tilde{G}',\tilde{q}')$ be a $\mathcal{C}_k$-generic realisation of $\tilde{G}'$. By Lemma~\ref{lemma:rank ext.}, there is a $\mathcal{C}_k$-symmetric realisation $(\tilde G,\tilde q)$ of $\tilde G$ which is $\rho_j$-symmetrically isostatic. Then, since $(\tilde G,\tilde p)$ is $\mathcal{C}_k$-generic, it is also $\rho_j$-symmetrically isostatic.
    
    So, we may assume that $|\overline{V(G)}|\geq2$. If $k=4,6$, by Lemma~\ref{lemma: there is an admissible reduction (rotation order 4,6)}, there is a $\mathbb{Z}_k^j$-gain tight graph $(G',\psi')$ on at most $t$ vertices obtained from $(G,\psi)$ by applying a reduction (exactly $t$ if we apply a 0-reduction, loop-1-reduction or 1-reduction, and exactly $t-1$ if we apply a 2-vertex reduction). By induction, every $\mathcal{C}_k$-generic realisation of $\tilde{G'}$ is $\rho_j$-symmetrically isostatic. Moreover, if we apply a loop-1-reduction at a vertex $v$ which removes a loop $e$, by the sparsity of $(G,\psi)$, the following hold: if $k=6,j=3$, then $e$ does not have gain $\gamma^{k/2}$; if the vertex incident to $v$ is fixed, call it $v_0$, then the graph spanned by $v,v_0$ is not $S_0(k,j)$. So conditions (C2) and (C3) in Lemma~\ref{lemma:rank ext.} hold.
    
    Let $\tilde{q}'$ be a $\mathcal{C}_k$-generic configuration of $\tilde{G'}$, which also satisfies the condition (C1) in Lemma~\ref{lemma:rank ext.} if the move applied is a 1-reduction. Notice that such a configuration does exist, since small symmetry-preserving perturbations of the points of a $\mathcal{C}_k$-generic framework maintain $\mathcal{C}_k$-genericity. By Lemma~\ref{lemma:rank ext.} there is a $\mathcal{C}_k$-symmetric realisation $(\tilde G,\tilde q)$ of $\tilde G$ which is $\rho_j$-symmetrically isostatic. By $\mathcal{C}_k$-genericity, $(\tilde G,\tilde p)$ is also $\rho_j$-symmetrically isostatic.
    
    So, assume that $k$ is odd. By Lemma~\ref{lemma: there is a vertex of degree 2 or 3}, $H$ has a free vertex $v$ of degree $2$ or $3$. If $v$ has degree $2$, or if it has degree $3$ with a loop, then we may apply a 0-reduction or loop-1-reduction at $v$ to obtain a $\mathbb{Z}_k^j$-gain tight graph $(G',\psi')$ on $t$ vertices. Moreover, if $v$ has a loop, and the vertex incident to $v$ is fixed, call it $v_0$, then the graph spanned by $v,v_0$ is not $S_0(k,j)$. By the inductive hypothesis, all $\mathcal{C}_k$-generic realisations of $\tilde{G'}$ are $\rho_j$-symmetrically isostatic. Then, our result holds by Lemma~\ref{lemma:rank ext.}. So, assume that $v$ has degree $3$ and no loop. Then, by Theorem~\ref{theorem on 1-reductions}, there is a $\mathbb{Z}_k^j$-tight graph $(G',\psi')$ on $t$ vertices obtained by applying a $1$-reduction at $v$. By the inductive hypothesis, all $\mathcal{C}_k$-generic realisations of $\tilde{G'}$ are $\rho_j$-symmetrically isostatic. Let $\tilde{q}'$ be a $\mathcal{C}_k$-generic realisation of $\tilde{G'}$ which satisfies condition (C1) of Lemma~\ref{lemma:rank ext.}. Then, our result holds by Lemma~\ref{lemma:rank ext.}. 
\end{proof}

We finally have our main combinatorial characterisation for $\mathcal{C}_k$, which is a direct result of Proposition \ref{necessary conditions for higher rotation} and Theorems \ref{theorem: final theorem for k-fold symmetry pt.1} and \ref{theorem: final theorem for k-fold symmetry pt.2}. 

\begin{theorem}
\label{final theorem for rotation}
Let $\Gamma$ be a cyclic group of order $k\geq4$. Assume that either $5\leq k\leq1000$ is odd or $k=4,6$, and let $(\tilde G,\tilde p)$ be a $\mathcal{C}_k$-generic framework with underlying $\Gamma$-symmetric graph $\tilde G$. Let $(G,\psi)$ be the $\Gamma$-gain graph of $\tilde G$. Then, $(\tilde G,\tilde p)$ is infinitesimally rigid if and only if:
\begin{itemize}
\item $(G,\psi)$ has a $(2,0,3,1)$-gain tight spanning subgraph; and
\item $(G,\psi)$ has a $(2,1,3,1)$-gain tight spanning subgraph; and
\item $(G,\psi)$ has a $\mathbb{Z}_k^j$-gain tight spanning subgraph for $2\leq j\leq k-2$.
\end{itemize}
\end{theorem}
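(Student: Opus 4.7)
The plan is to assemble the claim directly from the three cited results, representation by representation, using the block diagonalisation of the rigidity matrix. The only additional ingredient beyond these is the passage from $\rho_j$-symmetric isostaticity of a spanning subgraph to infinitesimal rigidity of the whole framework, which is a matroidal consequence of the fact that the row-dependencies of the orbit matrices $O_j(G,\psi,p)$ at a $\mathcal{C}_k$-generic $p$ are controlled by the gain sparsity counts of Section~\ref{sec:gaincounts}.

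First I would reduce infinitesimal rigidity of $(\tilde G,\tilde p)$ to a statement about each irreducible representation separately. By the block diagonalisation recalled in Section~\ref{sec:infrig}, $(\tilde G,\tilde p)$ is infinitesimally rigid if and only if, for every $0\le j\le k-1$, every $\rho_j$-symmetric infinitesimal motion of $(\tilde G,\tilde p)$ is trivial; equivalently, the block $\tilde R_j(\tilde G,\tilde p)$, and hence the orbit matrix $O_j(G,\psi,p)$, attains its maximum possible rank for each $j$. Since the row sets of $O_j(G,\psi,p)$ at a $\mathcal{C}_k$-generic $p$ form a matroid (cf.\ Remark~\ref{the count is matroidal}), this maximum rank is attained exactly when $E(G)$ contains a row basis, that is, a spanning subgraph $G_j\subseteq G$ such that the $\Gamma$-lifting of $(G_j,\psi|_{E(G_j)})$ admits a $\rho_j$-symmetrically isostatic $\mathcal{C}_k$-generic realisation.

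Applying Theorem~\ref{theorem: final theorem for k-fold symmetry pt.1} for $j=0$ now characterises the existence of such a $G_0$ as the existence of a $(2,0,3,1)$-gain tight spanning subgraph of $(G,\psi)$; the same theorem applied for $j=1,k-1$ yields the $(2,1,3,1)$-gain tight condition; and Theorem~\ref{theorem: final theorem for k-fold symmetry pt.2}, whose necessity half is Proposition~\ref{necessary conditions for higher rotation}, yields the $\mathbb{Z}_k^j$-gain tight condition for each $2\le j\le k-2$. Reading each equivalence in both directions and intersecting over all $j$ gives the claim. The step most prone to being glossed over in a full write-up is the matroidal passage from ``$O_j(G,\psi,p)$ has maximum rank'' to ``$(G,\psi)$ has a gain-tight spanning subgraph of the required type''; for $j\in\{0,1,k-1\}$ this is already available through \cite{SchulzeLP2024}, and for $2\le j\le k-2$ it follows from the matroidality of the count $f_k^j$ asserted in Remark~\ref{the count is matroidal}, itself a consequence of Theorem~\ref{theorem: final theorem for k-fold symmetry pt.2} once the latter has been established.
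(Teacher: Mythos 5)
Your proposal is correct and follows essentially the same route as the paper, which presents Theorem~\ref{final theorem for rotation} as a direct consequence of Proposition~\ref{necessary conditions for higher rotation} and Theorems~\ref{theorem: final theorem for k-fold symmetry pt.1} and~\ref{theorem: final theorem for k-fold symmetry pt.2}, via the block-diagonalisation into the representations $\rho_j$ and a row-basis selection in each orbit matrix. Your explicit identification of the passage from maximum rank of $O_j(G,\psi,p)$ to the existence of a gain-tight spanning subgraph (via the linear matroid of the rows together with the necessity of the counts) is exactly the step the paper leaves implicit.
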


\section{Rotation groups of even order at least 8}
\label{sec:even k}
In this section, we provide, for all even $|\Gamma|\geq 8$, examples of $\Gamma$-gain graphs that satisfy all conditions of Theorem~\ref{final theorem for rotation}, but whose $\mathcal{C}_{|\Gamma|}$-generic lifting frameworks are still not infinitesimally rigid. 

Let $k:=|\Gamma|\geq4$ be even, and let $G$ be the multigraph with exactly one free vertex $v$, which is free, and two loops $f_1,f_2$ at $v$ (see  Figure~\ref{fig: counterexample for C_8}(a)). Let $\gamma$ be the generator of $\Gamma$ which corresponds to 1 in $\mathbb{Z}_k$. Let $\psi:E(G)\rightarrow\Gamma$ be defined by letting $\psi(f_1)=\gamma$ and $\psi(f_2)=\gamma^3$. If $k\geq6$, $(G,\psi)$ is a well-defined $\Gamma$-gain graph. Moreover, if $k\geq8$, then $(G,\psi)$ is $\mathbb{Z}_k^j$-gain tight for all $2\leq j\leq k-2$. Since $G-f_1$ is both $(2,0,3,1)$-gain tight and $(2,1,3,1)$-gain tight, $(G,\psi)$ satisfies all three conditions of Theorem~\ref{final theorem for rotation}. Let $\tilde G$ be the $\Gamma$-lifting of $(G,\psi)$. We will show that no $\mathcal{C}_k$-symmetric realisation of $\tilde G$ is infinitesimally rigid. Further, we show that all $\mathcal{C}_k$-symmetric realisations of $\tilde G$ have a $\rho_{k/2}$-symmetric infinitesimal motion.

Take an arbitrary $\mathcal{C}_k$-symmetric realisation $(\tilde G,\tilde p)$ of $\tilde G$. By definition, the realisation of the vertices in $V(G)$ form a regular $k$-gon. Moreover, it is easy to see that the vertices of the $k$-gon alternate between vertices of the two partite sets of a bipartite graph (see e.g. Figure~\ref{fig: counterexample for C_8}(b) for the case when $k=8$), as no odd cycles are created. Clearly, the framework is $\mathcal{C}_k$-generic. It is also well known that such a framework has an `in-out' infinitesimal motion $m$ which, for $\tau(\delta)=C_k$, satisfies the system of equations
\begin{equation*}
    m(\delta^tv)=\begin{cases}
        C_k^tm(v) & \text{if } t \text{ is even}\\
        -C_k^tm(v) & \text{if } t \text{ is odd},
    \end{cases}
\end{equation*}
where $v$ is an arbitrary vertex of $\tilde{G}$ (here, $m(v)$ is a vector on the line from the origin to $p_v$), and $0\leq t\leq k-1$ (see e.g. \cite{Whitely84}).  Equivalently, for all $v\in V(\tilde{G})$ and $0\leq t\leq k-1$,
\begin{equation*}
    m(\gamma^tv)=\cos(\pi t)C_k^tm(v)=\cos(-\pi t)C_k^tm(v)=\exp(-\pi it)C_k^tm(v)=\overline{\rho_{k/2}(\gamma^t)}C_k^tm(v).
\end{equation*}
So, $m$ is a $\rho_{k/2}$-symmetric infinitesimal motion.

This example may be extended to the case in which the $\Gamma$-gain graph has a fixed vertex. Let $G$ be a multigraph with exactly two free vertices $u,v$, and one fixed vertex $v_0$. Let the edge set of $G$ be composed of two loops $f_1,f_2$ at $u$, one loop $f_3$ at $v$, and the edges $e_1=(u,v)$ and $e_2=(v,v_0)$ (see Figure~\ref{fig: counterexample for C_8}(c)). Let $\psi:E(G)\rightarrow\Gamma$ be defined by letting $\psi(f_1)=\gamma,\psi(f_2)=\gamma^3,\psi(f_3)=\gamma^2$, and $\psi(e_1)=\psi(e_2)=\text{id}$. Similarly as in the previous examples, $(G,\psi)$ is well-defined for all $k\geq6$. Moreover, it has the following spanning subgraphs: $G-f_1-f_3$, which is $(2,0,3,1)$-gain tight; $G-f_1$, which is $(2,1,3,1)$-gain tight; and $G-f_3$, which is $\mathbb{Z}_k^j$-gain tight for all $2\leq j\leq k-2$, provided $k\geq8$. Hence, for $k\geq8$, $(G,\psi)$ satisfies all conditions in Theorem~\ref{final theorem for rotation}. However, its $\Gamma$-covering $\tilde G$ has no infinitesimally rigid $\mathcal{C}_k$-symmetric realisation.

To see this, take a $\mathcal{C}_k$-generic realisation of $\tilde G$, and call it $(\tilde G,\tilde p)$. Since this is an extension of the previous example, $(\tilde G,\tilde p)$ still contains a regular $k$-gon $P$, and the graph induced by the vertices of $P$ is bipartite. In addition, $(G,\psi)$ contains two regular $k/2$-gons, $P_1$ and $P_2$, such that all vertices of $P_1,P_2$ are adjacent to the origin, and they are adjacent with the vertices of $P$ as shown in Figure~\ref{fig: counterexample for C_8}(d). Then, the infinitesimal motion from the previous example extends to an infinitesimal motion $m$ of $(\tilde G,\tilde p)$ which rotates $P_1$ and $P_2$ clockwise and anti-clockwise, respectively.
Similarly as in the previous example, it is easy to see that $m$ is a $\rho_{k/2}$-symmetric infinitesimal motion of $(\tilde G,\tilde p)$. (It is easy to check that the rank of the $\rho_{k/2}$-orbit matrix is at most 3, and so $\ker O_j(G,\psi,p)\neq\emptyset$. For details, see \cite{PhDThesis}.)

\begin{figure}[H]
    \centering
        \begin{tikzpicture}[scale=0.75]
        \draw(0,0) circle (0.15cm);
    \draw[->] (0,0.15) .. controls (0.45,0.65) and (0.45,-0.65) .. (0,-0.15);
    \draw[->] (0,-0.15) .. controls (-0.45,-0.65) and (-0.45,0.65) .. (0,0.15);
    \node[below] at (-0.7, 0.3){$\gamma$};
  \node[below] at (0.7, 0.5){$\gamma^3$};
  \node[below] at (0,-2.85){(a)};
    \end{tikzpicture}
    \begin{subfigure}[b]{.3\textwidth}
   \centering
      \begin{tikzpicture} 
        \newdimen\R
   \R=1.5cm
\draw (0:\R) \foreach \x in {45,90,...,360} {  -- (\x:\R) };
\foreach \x in {45,135,...,360} \draw[->,gray] (\x:\R) -- (\x: 1.9cm);
\foreach \x in {90,180,...,360} \draw[->,gray] (\x:\R) -- (\x: 1.1cm);
\draw (0:\R) -- (135:\R);
\draw (45:\R) -- (180:\R);
\draw (90:\R) -- (225:\R);
\draw (135:\R) -- (270:\R);
\draw (180:\R) -- (315:\R);
\draw (225:\R) -- (360:\R);
\draw (270:\R) -- (45:\R);
\draw (315:\R) -- (90:\R);
   \foreach \x/\l/\p in
     { 45/{(2,3,1)}/above,
      90/{(2,1,3)}/above,
      135/{(1,2,3)}/left,
      180/{(1,3,2)}/below,
      225/{(3,1,2)}/below,
      270/{(3,1,2)}/below,
      315/{(3,1,2)}/below,
      360/{(3,2,1)}/right
     }
     \node[inner sep=1pt,circle,draw,fill] at (\x:\R) {};
     \node[below] at (0,-2.3) {(b)};
      \end{tikzpicture}
\end{subfigure}
\begin{tikzpicture}[scale = 0.75]
        \draw(0,1) circle (0.15cm);
        \draw[fill=black](2,1) circle (0.15cm);
        \draw(-0.5,1.8) circle (0.15cm);
  \draw[gray, ->] (-0.35,1.8) .. controls (0.05,2.3) and (-1.05,2.3) .. (-0.65,1.8);
  \draw[->] (0,1.15) .. controls (0.45,1.65) and (0.45,0.35) .. (0,0.85);
  \draw[->] (0,0.85) .. controls (-0.45,0.35) and (-0.45,1.65) .. (0,1.15);
  \draw[gray, ->] (0, 1.15) -- (-0.38, 1.65);
  \draw[gray, ->] (-0.33, 1.8) -- (1.85, 1.1);
  \node[below] at (-0.7, 1.3){$\gamma$};
  \node[below] at (0.65, 1.5){$\gamma^3$};
  \node[above] at (-0.5, 2.1){$\gamma^2$};
  \node[below] at (1,-1.85) {(c)};
    \end{tikzpicture}
    \begin{subfigure}[b]{.3\textwidth}
   \centering
      \begin{tikzpicture}
        \newdimen\R
   \R=1.5cm
\draw (0:\R) \foreach \x in {45,90,...,360} {  -- (\x:\R) };
\foreach \x in {45,90,...,360} \draw[gray] (\x:\R) -- (\x-20:2.3cm) --(0:0cm);
\foreach \x in {45,135,...,360} \draw[->,gray] (\x:\R) -- (\x: 1.75cm);
\foreach \x in {90,180,...,360} \draw[->,gray] (\x:\R) -- (\x: 1.25cm);
\foreach \x in {90,180,...,360} \draw[->, gray] (\x-20:2.3cm) -- (\x-13: 2.3cm);
\foreach \x in {45,135,...,360} \draw[->, gray] (\x-20:2.3cm) -- (\x-27: 2.3cm);
\draw[gray] (70:2.3cm) -- (160:2.3cm) -- (250:2.3cm) -- (340:2.3cm)-- (70:2.3cm);
\draw[gray] (25:2.3cm) -- (115:2.3cm) -- (205:2.3cm) -- (295:2.3cm) -- (25:2.3cm);
\draw (0:\R) -- (135:\R);
\draw (45:\R) -- (180:\R);
\draw (90:\R) -- (225:\R);
\draw (135:\R) -- (270:\R);
\draw (180:\R) -- (315:\R);
\draw (225:\R) -- (360:\R);
\draw (270:\R) -- (45:\R);
\draw (315:\R) -- (90:\R);
   \foreach \x/\l/\p in
     { 45/{(2,3,1)}/above,
      90/{(2,1,3)}/above,
      135/{(1,2,3)}/left,
      180/{(1,3,2)}/below,
      225/{(3,1,2)}/below,
      270/{(3,1,2)}/below,
      315/{(3,1,2)}/below,
      360/{(3,2,1)}/right
     }
     \node[inner sep=1pt,circle,draw,fill] at (\x:\R) {};
     \node[inner sep=1pt,circle,draw,fill] at (0:0cm) {};
      \foreach \y in {25,70,...,385} \node[inner sep=1pt,circle,draw,fill] at (\y:2.3cm) {};
      \node at (0,-2.5) {(d)};
      \end{tikzpicture}
\end{subfigure}
    \caption{(a,c) show $\Gamma$-gain graphs with $\mathcal{C}_8$-symmetric frameworks (b,d), respectively. Though (a,c) satisfy the conditions in Theorem~\ref{final theorem for rotation}, (b,d) are $\rho_4$-symmetrically flexible. Here, $\gamma$ denotes the generator of $\Gamma$ which corresponds to rotation by $\pi/4$.}
    \label{fig: counterexample for C_8}
\end{figure}
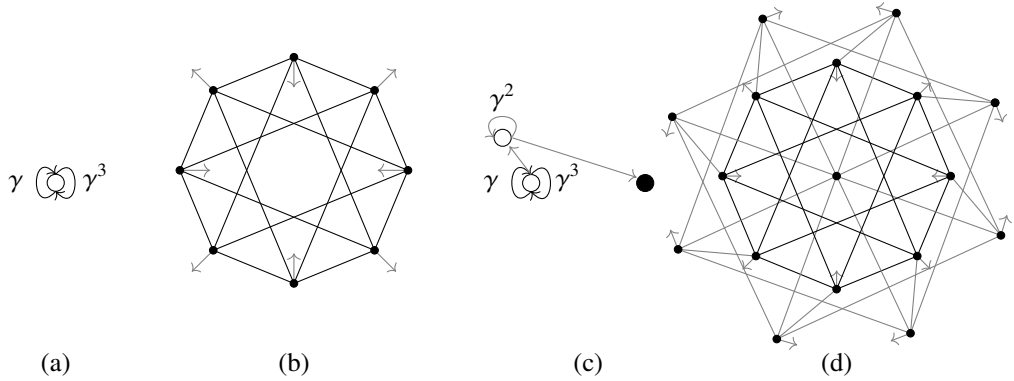

It would be interesting to search for further counterexamples that are not based on bipartite graphs. We also invite the reader to extend the combinatorial characterisations we have established for odd order cyclic groups in this paper to cyclic groups of odd order greater than 1000. The key issue here is to check the infinitesimal rigidity of the relevant base graphs. For further open questions on the infinitesimal rigidity of incidentally symmetric frameworks, see \cite{PhDThesis,SchulzeLP2024}.

\end{document}